\documentclass[a4paper, 11pt]{amsart}

\usepackage{amsmath, amssymb, amsthm, color, tikz, tikz-cd, hyperref, mathtools, stmaryrd, url, enumerate, mathrsfs}
\usetikzlibrary{calc}
\usepackage[capitalize]{cleveref}
\crefformat{ass}{Assumption #2$\dagger$#3}
\crefformat{step}{Step #2#1#3}
\crefformat{con}{Conjecture #2$\diamondsuit$#3}
\crefname{prop}{Proposition}{Propositions}
\crefname{rmkx}{Remark}{Remarks}

\let\originalleft\left
\let\originalright\right
\renewcommand{\left}{\mathopen{}\mathclose\bgroup\originalleft}
\renewcommand{\right}{\aftergroup\egroup\originalright}

\newcommand{\lb}{\left(}
\newcommand{\rb}{\right)}
\newcommand{\diff}{\mathrm{d}}
\newcommand{\eps}{\varepsilon}
\newcommand{\Q}{\mathbb{Q}}
\newcommand{\R}{\mathbb{R}}
\newcommand{\C}{\mathbb{C}}
\newcommand{\F}{\mathbb{F}}
\newcommand{\Z}{\mathbb{Z}}
\renewcommand{\P}{\mathbb{P}}
\newcommand{\pd}{\partial}
\renewcommand{\phi}{\varphi}
\newcommand{\ip}[2]{\left\langle #1, #2 \right\rangle}
\let\oldref\ref
\renewcommand{\eqref}[1]{\textup{(\oldref{#1})}}
\newcommand{\CO}{\operatorname{\mathcal{CO}}}

\newcommand{\Spec}{\operatorname{Spec}}
\newcommand{\Proj}{\operatorname{Proj}}
\newcommand{\Hom}{\operatorname{Hom}}
\newcommand{\Jac}{\operatorname{Jac}}
\newcommand{\HBM}{H^{\mathrm{BM}}}
\newcommand{\QHB}{QH_{\mathrm{B}}}
\newcommand{\Tor}{\operatorname{Tor}}
\newcommand{\m}{\mathfrak{m}}
\newcommand{\ks}{\mathfrak{ks}}
\newcommand{\qcup}{\mathbin{*}}

\newcommand{\hv}{\widehat{z}} 
\newcommand{\lv}{z} 
\newcommand{\llangle}{\langle \kern-2pt \langle}
\newcommand{\rrangle}{\rangle \kern-2pt \rangle}
\newcommand{\clos}{\operatorname{clos}}
\newcommand{\lspan}[1]{\langle #1 \rangle}
\newcommand{\LHD}{\Lambda_0 \llangle H \rrangle_\Delta}

\newcommand{\sphere}[2]
{
\def\u{-0.05}
\def\v{0.32}
\begin{scope}[shift={#1}]
\draw[domain=-1:181, samples=100, smooth] plot ({#2*cos(atan(\u)+\x)}, {#2*sin(atan(\u)+\x)}, {0});
\draw[domain=0:359, samples=200, fill=white, fill opacity=0.65, smooth cycle] plot ({cos(\x)*#2/sqrt((1+\u^2)*cos(\x)^2+2*\u*\v*cos(\x)*sin(\x)+(1+\v^2)*sin(\x)^2)}, {(\u*cos(\x)+\v*sin(\x))*#2/sqrt((1+\u^2)*cos(\x)^2+2*\u*\v*cos(\x)*sin(\x)+(1+\v^2)*sin(\x)^2)}, {sin(\x)*#2/sqrt((1+\u^2)*cos(\x)^2+2*\u*\v*cos(\x)*sin(\x)+(1+\v^2)*sin(\x)^2)});
\draw[domain=180:360, samples=100, smooth] plot ({#2*cos(atan(\u)+\x)}, {#2*sin(atan(\u)+\x)}, {0});

\end{scope}
} 

\addtolength{\hoffset}{-2cm}
\addtolength{\textwidth}{4cm}
\addtolength{\voffset}{-1.2cm}
\addtolength{\textheight}{2cm}

\theoremstyle{plain}
\newtheorem{thm}{Theorem}[section]
\newtheorem{lem}[thm]{Lemma}
\newtheorem{prop}[thm]{Proposition}
\newtheorem{cor}[thm]{Corollary}

\newtheorem{mthm}{Theorem}
\newtheorem{mcor}[mthm]{Corollary}

\theoremstyle{remark}

\newenvironment{rmk}
  {\pushQED{\qed}\rmkx}
  {\popQED\endrmkx}

\newenvironment{ex}
  {\pushQED{\qed}\exx}
  {\popQED\endexx}
  
  \theoremstyle{definition}

\newenvironment{defn}
  {\pushQED{\qed}\defnx}
  {\popQED\enddefnx}

\title{Quantum cohomology and closed--string mirror symmetry for toric varieties}
\author{Jack Smith}
\address{Department of Mathematics\\University College London\\Gower Street\\London\\WC1E 6BT}
\email{jack.smith@ucl.ac.uk}

\begin{document}

\begin{abstract}
We give a short new computation of the quantum cohomology of an arbitrary smooth toric variety $X$, by showing directly that the Kodaira--Spencer map of Fukaya--Oh--Ohta--Ono defines an isomorphism onto a suitable Jacobian ring.  The proof is based on the purely algebraic fact that a class of \emph{generalised Jacobian rings} associated to $X$ are free as modules over the Novikov ring.  In contrast to previous results of this kind, $X$ need not be compact.  When $X$ is monotone the presentation we obtain is completely explicit, using only well-known computations with the standard complex structure.
\end{abstract}

\maketitle

\section{Introduction and main results}

\subsection{Overview}
\label{sscOverview}

Toric varieties provide an important testing ground in algebraic and symplectic geometry, carrying enough structure to render many computations tractable but still being general enough to include plenty of non-trivial examples.  They also have the useful feature that their geometric properties can often be translated into combinatorial properties of the defining fans, or dually---and more relevant to us since they encode the symplectic structure---moment polyhedra.

For example, let $X$ be a toric variety of complex dimension $n$, defined by a moment polyhedron
\begin{equation}
\label{Polytope}
\Delta = \{ x \in \mathfrak{t}^* : \ip{x}{\nu_j} \geq -\lambda_j \text{ for } j = 1, \dots, N\}
\end{equation}
as described in \cref{sscToricBG}.  Letting $F_1, \dots, F_N$ denote the facets (codimension-$1$ faces) of $\Delta$, namely
\[
F_j = \Delta \cap \{x : \ip{x}{\nu_j} = -\lambda_j\},
\]
the \emph{Stanley--Reisner ring} of $X$ is defined to be
\[
SR(X) = \C[Z_1, \dots, Z_N] / ( Z_{j_1} \dots Z_{j_k} : F_{j_1} \cap \dots \cap F_{j_k} = \emptyset )
\]
(traditionally a Stanley--Reisner ring is associated to a \emph{simplicial complex} and here the relevant one is $\pd \Delta^*$, defined in the proof of \cref{SRisCM}).  The classical cohomology ring of $X$ then has the explicit \emph{Stanley--Reisner presentation}
\begin{equation}
\label{SRPresentation}
H^*(X; \C) \cong SR(X) \Big/ \Big(\sum_{j=1}^N \nu_j Z_j\Big),
\end{equation}
where the class $H_j$, Poincar\'e dual to the toric divisor $D_j$ given by the moment map preimage of $F_j$, is sent to the variable $Z_j$.

\begin{rmk}
We shall assume throughout that $\Delta$ has a vertex, which is equivalent to $X$ not admitting a splitting $X = \C^* \times X'$ (see \cref{VertexRmk}).  We otherwise allow $\Delta$, and hence $X$, to be non-compact, and this possible non-compactness is why we use the term `polyhedron' rather than `polytope'.
\end{rmk}

The goal of the present paper is to give a similar description of the \emph{quantum cohomology} of $X$.  Recall that this is a deformation of the classical cohomology over the \emph{universal Novikov ring}
\[
\Lambda_0 = \Big\{\sum_{j=1}^\infty a_jT^{l_j} : a_j \in \C \text{ and } l_j \in \R_{\geq 0} \text{ with } l_j \rightarrow \infty \text{ as } j \rightarrow \infty\Big\},
\]
in which the product is modified by counts of rational curves, weighted by $T^\text{area}$.  In particular, there is a natural identification of $\Lambda_0$-modules
\[
QH^*(X; \Lambda_0) \cong H^*(X; \Lambda_0) \cong H^*(X; \C) \otimes_\C \Lambda_0,
\]
and after reducing modulo the maximal ideal $\Lambda_+ \subset \Lambda_0$ (in which only strictly positive powers of $T$ are allowed) this becomes an isomorphism of algebras over $\Lambda_0 / \Lambda_+ = \C$.  For us, all rings and algebras, and homomorphisms between them, are implicitly unital.

Our starting point is to define a $T$-adically complete $\Lambda_0$-algebra $\LHD$ associated to $\Delta$, which comes with certain distinguished elements $\lv_1, \dots, \lv_N$.  The reduction of this algebra modulo $\Lambda_+$ is naturally identified with the Stanley--Reisner ring (\cref{SRRing}), and under this identification $\lv_j$ corresponds to $Z_j$.  Our main theorem is the following algebraic result:

\begin{mthm}
\label{Theorem1}
For any elements $\hv_1, \dots, \hv_N$ of $\LHD$ satisfying
\[
\hv_j = \lv_j \mod \Lambda_+
\]
for all $j$, the $\Lambda_0$-algebra
\begin{equation}
\label{eqGenJac}
\LHD \Big/ \clos\Big(\sum_{j=1}^N \nu_j \hv_j\Big)
\end{equation}
is free as a $\Lambda_0$-module, where $\clos$ denotes closure in the $T$-adic topology.
\end{mthm}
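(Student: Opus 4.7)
The plan is to lift the classical Stanley--Reisner presentation \eqref{SRPresentation} of $H^*(X;\C)$ to the $T$-adic setting by exploiting Cohen--Macaulayness of $SR(X)$ (\cref{SRisCM}). After fixing a $\Z$-basis of the lattice in $\mathfrak{t}$, I would view the single $\mathfrak{t}$-valued element $\sum_j \nu_j \hv_j$ as a tuple $(c_1, \ldots, c_n)$ of scalar elements of $\LHD$, with $n = \dim_\C X$. Their reductions $\bar c_i \in SR(X)$ are the $n$ linear forms appearing in \eqref{SRPresentation}; equivalently, they form a linear system of parameters. Cohen--Macaulayness then promotes them to a regular sequence in $SR(X)$. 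Next, I would upgrade this to regularity in $\LHD$ itself: a filtered argument, using that $\LHD$ is $T$-adically complete and (as should be established beforehand) topologically free on any lift of a $\C$-basis of $SR(X)$, transfers regularity from the associated graded to $\LHD$. Equivalently, the Koszul complex $K_\bullet(c_1, \ldots, c_n; \LHD)$ is acyclic in positive degrees.

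With regularity established, freeness of the quotient follows by a topological lift of the classical argument. I would pick a $\C$-basis $\{\bar e_\alpha\}$ of $H^*(X;\C) = SR(X)/(\bar c_1, \ldots, \bar c_n)$ and lift arbitrarily to a family $\{e_\alpha\}$ in $\LHD / \clos(c_1, \ldots, c_n)$. Topological Nakayama, combined with $T$-adic completeness of the quotient, ensures the $e_\alpha$ topologically span over $\Lambda_0$. For $\Lambda_0$-linear independence, a convergent relation $\sum_\alpha a_\alpha e_\alpha \in \clos(c_1, \ldots, c_n)$ with non-zero minimal-valuation $T^\lambda$-coefficient would, after ``dividing'' by $T^\lambda$ (legitimate precisely by regularity of the $c_i$), yield a non-trivial $\C$-linear relation among the $\bar e_\alpha$ in $H^*(X;\C)$, a contradiction.

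The main obstacle I anticipate is the regular-sequence lifting step. The Novikov ring $\Lambda_0$ is non-Noetherian with value group $\R_{\geq 0}$, so the usual Artin--Rees and Koszul-lifting machinery must be reworked for $T$-adically complete topological $\Lambda_0$-modules, with closures and convergent series treated directly rather than via finite presentations. The potential non-compactness of $X$---and thus possible infinite-dimensionality of $H^*(X;\C)$---compounds the difficulty, since finite-dimensional linear algebra is unavailable and all arguments must be uniform in the $T$-adic topology.
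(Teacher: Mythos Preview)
Your proposal shares the paper's central algebraic insight: Cohen--Macaulayness of $SR(X)$ promotes the linear forms $\bar c_i$ to a regular sequence (this is exactly \cref{RegSeq}), and freeness of the quotient then follows by a Nakayama-type argument using a lifted basis of $H^*(X;\C)$. On that level the two arguments agree.

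The substantive difference lies in how the regular-sequence step is upgraded from $SR(X)$ to the Novikov setting. You propose to lift regularity directly to $\LHD$ and run Koszul/Nakayama there, and you correctly flag the obstacle: $\Lambda_0$ is non-Noetherian with value group $\R$, so the usual Artin--Rees and filtered-lifting machinery does not apply off the shelf. What you do not supply is a mechanism for actually overcoming this, and your ``divide by $T^\lambda$'' step for linear independence tacitly assumes $T$-torsion-freeness of the quotient, which is essentially what is to be proved. The paper's device is a \emph{finitisation}: it restricts the $T$-exponents to a discrete submonoid $G\subset\R_{\geq 0}$ tailored to the $\hv_j$, then truncates at a height $g\in G$ to obtain Artinian-flavoured rings $R=\F[G]/F^g$ and $S=\F[\Gamma]/F^g$. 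In this finite setting the regular sequence in $SR(X)=S/\m S$ yields $\Tor_1^R(S/J,R/\m)=0$ by a short inductive argument (\cref{lemTor}), and then ordinary Nakayama (with $\m$ nilpotent) gives that $S/J$ is free over $R$ (\cref{lemFreeR}). Tensoring up to $\Lambda_0/T^g\Lambda_0$ and passing to the inverse limit over $g$ produces the result, with the closure of the ideal appearing naturally as the kernel of the limit map. This sidesteps entirely the non-Noetherian and $\R$-filtration issues you anticipate.

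One minor correction: your concern that $H^*(X;\C)$ may be infinite-dimensional when $X$ is non-compact is unfounded here---by \cref{rmkHRank} its rank equals the number of vertices of $\Delta$, which is finite. (The infinite-dimensional object is $SR(X)$, not $H^*(X;\C)$.)
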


The main ingredient in the proof of \cref{Theorem1} is \cref{RegSeq}, which asserts that the linear expressions $\sum_j \nu_j Z_j$ form a regular sequence in $SR(X)$.  This in turn is proved using a dimension count, after showing that $SR(X)$ is Cohen--Macaulay (\cref{SRisCM}).

\cref{Theorem1} has the following consequence:

\begin{mcor}
\label{corModIso}
In the setup of \cref{Theorem1}, any $\Lambda_0$-module homomorphism
\[
QH^*(X; \Lambda_0) \rightarrow \LHD \Big/ \clos\Big(\sum_{j=1}^N \nu_j \hv_j\Big)
\]
which sends $H_j$ to $\hv_j$ for all $j$ is an isomorphism.
\end{mcor}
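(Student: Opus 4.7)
The strategy is to combine \cref{Theorem1} with the Stanley--Reisner presentation \eqref{SRPresentation} via a Nakayama-type argument in the $T$-adic topology. Denote the given homomorphism by $\phi$ and abbreviate $A := \LHD\big/\clos\big(\sum_{j=1}^N \nu_j \hv_j\big)$.

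First I would reduce both sides modulo $\Lambda_+$. The source becomes $H^*(X;\C)$ by the defining identification of $QH^*(X;\Lambda_0)$. For the target, observe that $\Lambda_+ \cdot \LHD$ contains $T^{\eps} \LHD$ for every $\eps > 0$, and consequently $\clos(I) + \Lambda_+ \LHD = I + \Lambda_+ \LHD$ for any ideal $I \subset \LHD$. Combined with $\LHD/\Lambda_+ \cong SR(X)$ and $\hv_j \equiv \lv_j \equiv Z_j \pmod{\Lambda_+}$, this yields $A/\Lambda_+ A \cong SR(X)/(\sum_j \nu_j Z_j)$. The induced map $\bar\phi$ sends $H_j$ to $Z_j$, so by \eqref{SRPresentation} it is an isomorphism of $\C$-vector spaces.

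To promote this to an isomorphism at the $\Lambda_0$-level I would pick a $\C$-basis $\{v_i\}$ of $H^*(X;\C)$, which is automatically a $\Lambda_0$-basis of $QH^*(X;\Lambda_0)$, and set $w_i := \phi(v_i)$. By \cref{Theorem1} the target $A$ is $\Lambda_0$-free, and it is $T$-adically complete as a quotient of the complete ring $\LHD$ by a closed ideal. Since $\{\bar w_i\}$ is a $\C$-basis of $A/\Lambda_+ A$, a successive-approximation argument shows $\{w_i\}$ is a $\Lambda_0$-basis of $A$: given $a \in A$, one expresses $\bar a$ in the $\bar w_i$-basis over $\C$, lifts the coefficients to $\Lambda_0$, subtracts to get an error in $\Lambda_+ A$, and iterates; $T$-adic completeness makes the resulting series converge and yields a preimage. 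The analogous argument applied to a hypothetical kernel element forces all its coefficients to vanish, giving injectivity.

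The main obstacle is the topological bookkeeping---verifying the closure identity above and making the iterative lift converge uniformly across $T$-orders. Both reduce to the fact that $\Lambda_0$ is a complete local ring with maximal ideal $\Lambda_+$, so once the mod-$\Lambda_+$ isomorphism is in hand and both sides are known to be $\Lambda_0$-free, the corollary collapses to the standard Nakayama lemma over a complete local ring.
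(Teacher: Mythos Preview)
Your argument is correct and follows essentially the same route as the paper: both sides are free $\Lambda_0$-modules (the domain by definition, the codomain by \cref{Theorem1}), so it suffices to check that the reduction modulo $\Lambda_+$ is an isomorphism, which is exactly the Stanley--Reisner presentation \eqref{SRPresentation}. The paper compresses this into a single sentence, whereas you spell out the closure identity and the Nakayama/successive-approximation step explicitly; since the modules have finite rank one can alternatively argue via determinants over the local ring $\Lambda_0$, avoiding any appeal to completeness.
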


\begin{proof}
Both sides are free $\Lambda_0$-modules, so it is enough to show that the reduction modulo $\Lambda_+$ is an isomorphism, and this follows from the Stanley--Reisner presentation \eqref{SRPresentation} of $H^*(X; \C)$.
\end{proof}

Of course, \cref{corModIso} is only useful if we can find a module homomorphism with the required properties, and it is only interesting if the homomorphism is actually a map of \emph{algebras} (since the interesting feature of $QH^*(X; \Lambda_0)$ is its product).  Fortunately, there is a natural geometric construction of such an algebra homomorphism, via a form of closed--open string map:

\begin{prop}[{Fukaya--Oh--Ohta--Ono \cite{FOOOMSToric}, adapted to our setting in \cref{secks}}]
\label{propFOOOks}
There exists a \emph{superpotential} $W$ in $\LHD$, which decomposes naturally as a sum
\[
W = W_1 + \dots + W_N.
\]
Each $W_j$ satisfies
\[
W_j = \lv_j \mod \Lambda_+,
\]
and there is a $\Lambda_0$-algebra homomorphism
\[
\ks : QH^*(X; \Lambda_0) \rightarrow \LHD \Big/ \Big(\sum_{j=1}^N \nu_j W_j \Big)
\]
(the \emph{Kodaira--Spencer map}) which sends $H_j$ to $W_j$.
\end{prop}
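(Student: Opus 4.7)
The plan is to implement the Fukaya--Oh--Ohta--Ono construction of the superpotential and Kodaira--Spencer map, adapted to our possibly non-compact setting. Fix a Lagrangian toric fibre $L \subset X$ over an interior point of $\Delta$. Cho--Oh's classification identifies the basic disk classes $\beta_1, \dots, \beta_N \in H_2(X, L; \Z)$---one per facet of $\Delta$, each of Maslov index $2$ and intersecting the toric divisor $D_j$ exactly once---and establishes $n_{\beta_j} = 1$, where $n_\beta$ denotes the count of bordered $J$-holomorphic disks in class $\beta$ through a generic boundary marked point; every Maslov-$2$ class with a stable $J$-holomorphic representative decomposes as $\beta_j + \alpha$ for some $j$ and some Maslov-$0$ spherical class $\alpha$. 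I would define the superpotential as the $T$-adic sum
\[
W = \sum_\beta n_\beta\, T^{\omega(\beta)} \lv^{\pd \beta},
\]
assembled in $\LHD$ so that the contribution of $\beta_j$ corresponds to $\lv_j$, and then set
\[
W_j = \sum_\beta (\beta \cdot H_j)\, n_\beta\, T^{\omega(\beta)} \lv^{\pd \beta}.
\]
The identity $W = W_1 + \dots + W_N$ uses $\sum_j (\beta \cdot H_j) = c_1(\beta) = 1$ for every Maslov-$2$ class, and $W_j \equiv \lv_j \pmod{\Lambda_+}$ follows from $n_{\beta_j} = 1$ together with the fact that non-trivial spherical bubble corrections carry strictly positive extra Novikov weight.

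For the Kodaira--Spencer map I would use FOOO's closed--open pairing: for $a \in H^*(X; \Lambda_0)$, $\ks(a)$ is a $T$-adic sum counting pseudoholomorphic disks with boundary on $L$ and one interior marked point constrained to a cycle Poincar\'e dual to $a$. The standard $A_\infty$-compatibility---obtained by considering moduli of disks with two interior marked points---makes $\ks$ into a $\Lambda_0$-algebra homomorphism. The identity $\ks(H_j) = W_j$ is then the divisor axiom given our choice of $W_j$, and the target factors through $\clos(\sum_j \nu_j W_j)$ because the Stanley--Reisner linear relation $\sum_j \nu_j H_j = 0$ remains valid in $QH^*(X; \Lambda_0)$ and must therefore be sent to $0$ by any algebra homomorphism.

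The main obstacle is non-compactness of $X$: the moduli spaces of $J$-holomorphic disks on $L$ are not compact a priori, so the individual counts $n_\beta$ and their interior-constrained analogues could be ill-defined, and the $T$-adic sums may fail to converge. I would address this using the moment map as an exhaustion function on $X$: by toric invariance of the standard complex structure, a $J$-holomorphic disk of bounded symplectic area is confined to the moment-map preimage of a bounded region of $\Delta$, and only finitely many Maslov-$2$ classes $\beta$ have area below any given threshold. This yields well-defined class-by-class counts together with the required $T$-adic convergence of $W$ and $\ks$; the remaining content of the proposition then follows from FOOO's algebraic formalism essentially verbatim.
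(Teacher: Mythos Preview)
Your definitions of $W$ and the $W_j$, the decomposition $W=\sum_j W_j$ via $\sum_j (D_j\cdot\beta)=1$, and the leading-order computation $W_j\equiv\lv_j$ all match the paper, as does the outline of $\ks$ via $\CO^0$ and the two-marked-point cobordism for the ring-homomorphism property. The genuine gap is your justification of the codomain. You argue that the relation $\sum_j \nu_j H_j = 0$ in $QH^*$ forces the target to be $\LHD / (\sum_j \nu_j W_j)$, but this is backwards: the cobordism establishes that $\CO^0 : QH^* \to HF^*(L,L; \LHD)$ is a ring homomorphism, not any lift to $\LHD$ itself. The quotient in the codomain arises because the subalgebra of $HF^*$ generated by $1_L$ is $\LHD / (\sum_j \nu_j W_j)$---the ideal comes from the Floer differential $b \mapsto (\sum_j \ip{b}{\nu_j} W_j)\cdot 1_L$ on $H^1(L)$ (\cref{lemHF}), not from a relation in $QH^*$. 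One then still needs a reason why the image of $\CO^0$ lands in $\LHD\cdot 1_L$; the paper uses torus-equivariance of the Kuranishi structures (or, alternatively, that the $H_j$ generate $QH^*$ and each $\CO^0(H_j)$ is visibly a multiple of $1_L$). The relation $\sum_j \nu_j H_j = 0$ is then a \emph{consequence} of this picture, not the input.

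A secondary issue is your handling of non-compactness. The assertion that a bounded-area holomorphic disk is confined to a bounded moment-map region ``by toric invariance of the standard complex structure'' is not justified; invariance alone does not give such a confinement. The paper instead observes (\cref{lemStdJ}) that $X$ is projective over the affine variety $\Spec\C[\Sigma_\Z]_0$, and applies the maximum principle in the base. Finally, convergence must be checked for \emph{all} disk counts entering $\ks$ and its cobordisms, not only the Maslov-$2$ ones; this requires controlling the index of sphere bubbles via Gromov compactness as in \cref{Convergence}, which your sketch does not address.
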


In \cite{FOOOMSToric} only \emph{compact} toric varieties are explicitly considered, but compactness plays no role in the proof of \cref{propFOOOks}.  We shall see in \cref{DjGenerate} that the toric divisor classes $H_j$ generate $QH^*(X; \Lambda_0)$ as a $\Lambda_0$-algebra (this uses the fact that $\Delta$ has a vertex), so $\ks$ is in fact uniquely determined by its values $W_j$ on these classes.  In \cite{SmithGeneration} we interpret $\ks$ in terms of the Hochschild cohomology of certain toric fibres which split-generate the Fukaya category of $X$ when it is compact.

\begin{rmk}
\label{rmkGroundField}
\cref{Theorem1} holds over any ground field, not just $\C$, but in order to define quantum cohomology one is generally restricted to working in characteristic zero since the moduli spaces of curves being counted may only admit virtual fundamental chains over $\Q$, not $\Z$.  Moreover, \cref{propFOOOks} relies on the technical framework of Fukaya--Oh--Ohta--Ono's \emph{canonical de Rham model} for Floer theory, and so requires the ground field to contain $\R$.  It is likely that this requirement can be dropped using a suitable singular model but this has not been written down in detail.
\end{rmk}

Geometrically, the superpotential $W$ counts rigid pseudoholomorphic discs in $X$ with boundary on a fixed (but arbitrary) Lagrangian torus orbit (or \emph{toric fibre}) $L$, which send a boundary marked point to a generically chosen point $p \in L$.  In this count, each disc is weighted by $T^\text{area}$ and by its boundary homology class.  The summand $W_j$ counts the same discs, but additionally weighted by their intersection numbers with the toric divisor $D_j$.  These disc counts can be viewed as formal functions on the space $H^1(L; \C^*)$ of local systems on $L$, which we can pull back to $H^1(L; \C)$ via the exponential map, and the differential of $W$ as a function on $H^1(L; \C)$ satisfies
\[
\diff W= \sum_{j=1}^N \nu_j W_j
\]
(see \cref{logDerivative}).  This motivates:

\begin{defn}
Algebras of the form appearing in \eqref{eqGenJac} are \emph{generalised Jacobian rings}.
\end{defn}

By combining \cref{corModIso} with \cref{propFOOOks} we obtain the main geometric result of the paper, which is a form of \emph{closed-string mirror symmetry}:

\begin{mcor}
\label{corksIso}
For any toric variety $X$ whose moment polyhedron has a vertex, the Kodaira--Spencer map constructed geometrically by Fukaya--Oh--Ohta--Ono in \cite{FOOOMSToric} gives an isomorphism of $\Lambda_0$-algebras
\[
\ks : QH^*(X; \Lambda_0) \xrightarrow{\ \cong\ } \LHD / \clos ( \diff W ).
\]
\end{mcor}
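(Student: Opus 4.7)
The plan is to combine \cref{propFOOOks} with \cref{corModIso}, so that the statement becomes essentially a bookkeeping exercise built on top of these two results. By \cref{propFOOOks} the Kodaira--Spencer map is a $\Lambda_0$-algebra homomorphism
\[
\ks \mc QH^*(X; \Lambda_0) \rightarrow \LHD \Big/ \Big(\sum_{j=1}^N \nu_j W_j \Big)
\]
sending $H_j \mapsto W_j$, with each $W_j$ satisfying $W_j = \lv_j \mod \Lambda_+$. By \cref{logDerivative} the generators of this relation ideal are exactly the components of $\diff W$, so composing $\ks$ with the canonical surjection onto the quotient by the $T$-adic closure of the same ideal yields a $\Lambda_0$-algebra homomorphism
\[
\overline{\ks} \mc QH^*(X; \Lambda_0) \rightarrow \LHD / \clos(\diff W)
\]
which still sends $H_j$ to $W_j$; this is the map appearing in the statement of \cref{corksIso}.

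Since the hypothesis $W_j = \lv_j \mod \Lambda_+$ is furnished by \cref{propFOOOks}, I would then apply \cref{corModIso} with $\hv_j \coloneqq W_j$. Its conclusion is that the underlying $\Lambda_0$-module map $\overline{\ks}$ is an isomorphism, and being a $\Lambda_0$-algebra homomorphism as well it is an isomorphism of $\Lambda_0$-algebras, as required.

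There is no real obstacle at this stage: the substantive algebraic content---freeness of the generalised Jacobian ring over the Novikov ring---is encoded in \cref{Theorem1} (and ultimately in the Cohen--Macaulayness of $SR(X)$ asserted by \cref{SRisCM}), while the geometric input is the construction of $\ks$ in \cref{propFOOOks}. The only point needing even minor care is the transition from the ideal $(\sum_j \nu_j W_j)$ appearing in \cref{propFOOOks} to its closure $\clos(\diff W)$ appearing in \cref{corksIso}, which is automatic because the former is contained in the latter, so the quotient by the closure is a further quotient of the quotient by the ideal.
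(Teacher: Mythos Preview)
Your proposal is correct and follows exactly the route the paper intends: the corollary is stated in the paper as the direct combination of \cref{propFOOOks} with \cref{corModIso}, and your write-up simply spells out the bookkeeping (the identification of $(\sum_j \nu_j W_j)$ with $(\diff W)$ via \cref{logDerivative}, and the passage to the closure) that the paper leaves implicit.
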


The picture to have in mind is that $\Spec QH^*(X; \Lambda_0)$ is a branched cover of $\Spec \Lambda_0$, which is a kind of formal disc, whose fibre over $T=0$ is simply $\Spec$ of the classical cohomology $H^*(X; \C)$.  The point of \cref{Theorem1}---and of the definition of $\LHD$---is that $\Spec \LHD / \clos ( \diff W )$ is also a branched cover of $\Spec \Lambda_0$, with the same fibre over $0$.  The map $\Spec \ks$ defines a morphism between the two covers, and the classical Stanley--Reisner presentation ensures that it is an isomorphism over $0$, from which we deduce that it is an isomorphism everywhere.

Since our proof does not rely on the details of the construction of $\ks$, it can readily accommodate modifications such as bulk deformations and $B$-fields, which we discuss in \cref{sscBulk,sscBFields} (these basically amount to considering the \emph{big} quantum cohomology).  Exactly how the result should be interpreted for non-compact $X$ is discussed in \cref{sscNonCompact}---essentially, quantum cohomology now depends on the choice of almost complex structure, and the corollary holds for the standard integrable one. Although the Kodaira--Spencer map is defined without taking the closure of the ideal in the codomain, the closure is crucial in our proof that $\ks$ is an isomorphism, where it appears naturally upon taking a limit.

Various properties of quantum cohomology follow from the isomorphism of \cref{corksIso}, for example the invertibility of the toric divisor classes and \emph{generic semisimplicity} when $X$ is compact.  The former is well-known and we discuss it briefly in \cref{sscInvertibility}, whilst for the latter we refer the interested reader to \cite[Corollary 5.12]{IritaniConvergence}.

If $X$ is monotone (the first Chern class is a positive multiple of the K\"ahler class) then things simplify considerably; see \cref{MonotoneQH} and the surrounding discussion in \cref{sscMonotonicity}.  In particular, the superpotential $W$ coincides with its \emph{leading-order} term $\lv_1 + \dots + \lv_N$, so the presentation of $QH^*(X)$ is completely explicit.  We can also avoid the slightly awkward completions and closures which appear in the general case.  For non-compact monotone $X$, discussed in \cref{sscMonNonCpct}, one can use contact-type almost complex structures as well as the standard integrable one if $X$ is conical at infinity.

\begin{rmk}
\label{rmkLocJac}
Let $\Lambda$ denote the universal Novikov \emph{field}, defined in the same way as $\Lambda_0$ but with the exponents of $T$ now allowed to be negative.  When $X$ is compact, we will see in \cref{sscInvertibility} that
\[
\Lambda \llangle H \rrangle_\Delta \coloneqq \Lambda \otimes_{\Lambda_0} \LHD
\]
is a completion of the group ring $\Lambda [H_1(L; \Z)]$, and the reader may be more familiar with statements of closed-string mirror symmetry involving rings of this form.  When $X$ is non-compact, however, $\Lambda \llangle H \rrangle_\Delta$ is a completion of a \emph{proper subring} of the group ring.  Adjoining the missing monomials by localising in a suitable way is conjectured to give the \emph{symplectic cohomology}, $SH^*(X; \Lambda)$, and this has been verified in some special cases \cite{RitterFanoToric}.
\end{rmk}

To demonstrate \cref{corksIso}, or rather its monotone refinement \cref{MonotoneQH}, we give a quick sample calculation, which also illustrates \cref{rmkLocJac}:

\begin{ex}
\label{Example}
Let $X$ be the total space of the line bundle $\mathcal{O}_{\C\P^1}(-1)$, whose polyhedron has facets with normals
\[
\nu_1 = \begin{pmatrix} 1 \\ 0 \end{pmatrix} \text{, } \nu_2 = \begin{pmatrix} 1 \\ 1 \end{pmatrix} \text{ and } \nu_3 = \begin{pmatrix} 0 \\ 1 \end{pmatrix},
\]
and take $\lambda_j = 1$ for all $j$.  This is monotone, so its quantum cohomology can be defined over $\Z[T] \subset \Lambda_0$, and by \cref{MonotoneQH} there is an isomorphism of $\Z[T]$-algebras
\[
\ks_\mathrm{mon} : QH^*(X; \Z[T]) \rightarrow \Z[T, \lv_1, \lv_2, \lv_3] / (\sum_j \nu_j \lv_j),
\]
which sends each $H_j$ to $\lv_j$.  Here $\Z[T, \lv_1, \lv_2, \lv_3]$ is the monotone analogue of $\LHD$, and the $\lv_j$ are monomials in $\Z[T, x^{\pm 1}, y^{\pm 1}] \cong \Z[T][H_1(L; \Z)]$ whose $x$- and $y$-exponents are given by the components of the $\nu_j$.  Explicitly we have $\lv_1 = Tx$, $\lv_2 = Txy$ and $\lv_3 = Ty$, so upon inverting $T$ the ring $\Z[T, \lv_1, \lv_2, \lv_3]$ becomes a proper subring of the group ring $\Z[T^{\pm 1}][H_1(L; \Z)]$.

The relations between $T$ and the $\lv_j$ are generated by $\lv_1 \lv_3 = T \lv_2$, whilst the ideal $(\sum_j \nu_j \lv_j)$ imposes the extra relations $\lv_1 + \lv_2 = \lv_2 + \lv_3 = 0$.  The codomain of $\ks_\mathrm{mon}$ can therefore be written as $\Z[T, \lv_2] / (\lv_2^2 - T \lv_2)$, where $\lv_2$ is now considered as an independent formal variable.  Noting that $H_2$ is exactly the exceptional divisor class $E$, we obtain
\[
QH^*(X; \Z[T]) = \Z[T, E] / (E^2-TE).
\]
This agrees with the computation of Ritter \cite[Corollary 3]{RitterNegative}, after observing that his class $\omega_Q$ (which evaluates to $+1$ on the exceptional curve) is equal to $-E$.  Ritter also shows that
\[
SH^*(X; \Z[T^{\pm 1}]) = \Z[T^{\pm 1}, E] / (E-T),
\]
and after rewriting this as $\Z[T^{\pm 1}, E^{\pm 1}]/(E^2-TE)$ we see that in this case the symplectic cohomology is indeed obtained from the quantum cohomology by adjoining the missing group ring monomials.
\end{ex}

\cref{corksIso} is proved for compact $X$ in \cite{FOOOMSToric}, but (in contrast to the proof of \cref{propFOOOks}) the compactness of $X$ is used in an essential way.  The main contribution of this work is therefore a proof in the non-compact case, where there are few existing computations of quantum cohomology (see \cite{RitterFanoToric} for the state of the art).  Here even the statement of the correct form of the Jacobian ring on the right-hand side seems not to have appeared in the literature in full generality, although it may be known to experts.

Our method of proof also has significant advantages over previous approaches.  First, it involves no direct computation of relations in quantum cohomology (for example, the \emph{quantum Stanley--Reisner relations}, which are the key ingredient in \cite{FOOOMSToric}; they correspond to $E^2-TE$ in \cref{Example}), and no dimension-counting to prove injectivity of the map.  Second, our description of the algebra $\LHD$ is based on a certain monoid, which is completely natural from the perspective of Floer theory (it is in some sense the smallest monoid which can record the boundaries and areas of holomorphic discs bounded by $L$), whose geometry vividly encodes both the quantum Stanley--Reisner relations and the filtration used to ensure convergence of disc counts.  And finally our proof is rather self-contained, apart from an application of Reisner's Cohen--Macaulay criterion: we do not even assume the presentation \eqref{SRPresentation} of the classical cohomology of $X$, showing instead how it follows from the geometrically well-motivated \cref{propFOOOks} (in fact, from just its leading-order part, which is much easier to construct---see \cref{rmkSRObjections}).

A secondary goal of this paper is to make the ideas behind \cref{corksIso} accessible to a wider audience, and particularly to algebraic geometers, by focusing on the algebra and keeping the discussion of the pseudoholomorphic curve theory at a schematic level.  Our definitions of the main rings and their topologies differ slightly from those given by Fukaya--Oh--Ohta--Ono, and hopefully emphasise the geometric intuition behind them.

We expect that \cref{Theorem1}, and the ideas behind it, will have other applications beyond the proof of \cref{corksIso}.  For instance, one may be able to give alternative geometric constructions of algebra maps satisfying the hypotheses of \cref{corModIso}, perhaps incorporating extensions such as equivariant parameters.  More generally, the subring of the cohomology of a variety generated by the components of a normal crossings divisor is naturally a quotient of a Stanley--Reisner ring (the $Z_j$ correspond to components and the vanishing of products corresponds to emptiness of intersections) and we hope that our approach to \cref{Theorem1} may be brought to bear in understanding deformations of such rings, arising in Floer theory or otherwise.

\subsection{Structure of the paper}

In the rest of the introduction we sketch the motivation behind \cref{corksIso} for readers unfamiliar with mirror symmetry (\cref{sscMSMotivation}) and discuss previous related results (\cref{OtherWorks}).  \Cref{secks} then outlines the construction of the Kodaira--Spencer map, beginning with a quick recap on toric geometry.  Readers already comfortable with the idea, or who are only interested in the algebraic statement \cref{Theorem1}, may wish to skip this, consulting only \cref{sscCone,sscCoeffRings} for the definition of the ring $\LHD$ and elements $\lv_1, \dots, \lv_N$ appearing in the main results.

The proof of \Cref{Theorem1} occupies \cref{secProof}.  In \cref{sscSR} we relate $\LHD$ to the Stanley--Reisner ring and give a novel proof of the Stanley--Reisner presentation, which may be of independent interest.  In \cref{sscMainLemmas} we then prove the two main lemmas, namely that the Stanley--Reisner ring is Cohen--Macaulay (in fact, Gorenstein if $X$ is compact) and that the generators of the ideal $(\sum_j \nu_j Z_j)$ form a regular sequence.  \Cref{sscProof} combines these with an algebraic `finitisation' procedure to prove \cref{Theorem1}.

We finish in \cref{secFurtherDiscussion} by covering some technical points and variants of our results.  \Cref{sscNonCompact} explains how to interpret \cref{corksIso} when $X$ is non-compact---in this case one has to be careful about pseudoholomorphic curves escaping to infinity.  Next we discuss the refinements that can be made when $X$ is monotone (\cref{sscMonotonicity}) and how to deal with non-compactness in this setting (\cref{sscMonNonCpct}).  We then deal with modifications arising from bulk deformations (\cref{sscBulk}) and $B$-fields (\cref{sscBFields}), and finally prove the invertibility of the toric divisors when $X$ is compact (\cref{sscInvertibility}).

\subsection{Mirror symmetry motivation}
\label{sscMSMotivation}

Mirror symmetry is fundamentally a duality between torus fibrations.  In its most basic form, we start with a lattice $M \cong \Z^n$ inside the vector space
\[
M_\R = M \otimes_\Z \R
\]
and consider the dual torus fibrations $Y$ and $\check{Y}$ over $M_\R$ with fibres $M_\R^* / M^*$ and $M_\R / M$.  These are fibrewise quotients of the cotangent and tangent bundles of $M_\R$, hence carry natural symplectic and complex structures respectively, and constitute the archetypal example of a mirror pair.

The symplectic manifold $Y$ is fibred by (special) Lagrangian tori, whilst in accordance with the \emph{SYZ} philosophy (see \cite{Au} for a beautiful introduction) its mirror $\check{Y}$ parametrises pairs $(L, \rho)$ comprising one of these tori $L$ and a choice of $\mathrm{U}(1)$-local system
\[
\rho : H_1(L; \Z) \rightarrow \mathrm{U}(1).
\]
Each $(L, \rho)$ defines an object of the Fukaya category $\mathcal{F}(Y)$ of $Y$, and since the fibres bound no pseudoholomorphic discs we have
\[
H^*\Hom_{\mathcal{F}(Y)}((L_0, \rho_0), (L_1, \rho_1)) = HF^*((L_0, \rho_0), (L_1, \rho_1)) \cong \begin{cases} H^*(L_0) & \text{if } (L_0, \rho_0) = (L_1, \rho_1) \\ 0 & \text{otherwise.} \end{cases}
\]
The latter isomorphism follows from the fact that if $L_0 \neq L_1$ then the Floer complex
\[
CF^*((L_0, \rho_0), (L_1, \rho_1))
\]
vanishes since the Lagrangians are disjoint, whilst if $L_0 = L_1$ then it reduces to the Morse complex of $L_0$ twisted by the local system $\rho_0^* \otimes \rho_1$; here the $*$ denotes the dual representation, or equivalently the complex conjugate.  From the perspective of \emph{homological} mirror symmetry we would say that $\mathcal{F}(Y)$ is equivalent in an appropriate sense to the derived category $\mathrm{D}^b \mathrm{Coh} (\check{Y})$ of coherent sheaves on $\check{Y}$, with a torus object in the former sent to the skyscraper sheaf of the corresponding point in the latter.

\begin{rmk}
\label{WrappedCat}
Since $Y$ is non-compact, in order to get an equivalence with $\mathrm{D}^b \mathrm{Coh} (\check{Y})$ we should really work with the \emph{wrapped} Fukaya category $\mathcal{W}(Y)$, in which certain non-compact Lagrangians are allowed, and the Floer complexes between such Lagrangians are defined by `wrapping' one of them around the conical end of $Y$ as it goes off to infinity.
\end{rmk}

Now suppose that we compactify $Y \cong (\C^*)^n$ to the symplectic toric variety $X$ considered in \cref{sscOverview} (but now assumed compact).  This introduces pseudoholomorphic discs bounded by the fibres, and heuristically we can define a holomorphic function
\[
W : \check{Y} \cong (\C^*)^n \rightarrow \C
\]
by sending a pair $(L, \rho)\in \check{Y}$ to the count of rigid pseudoholomorphic discs $u$ with boundary on $L$, weighted by the monodromy $\rho ([\pd u])$.  After fixing a choice of $L$ we can---still heuristically---view this function as an element of $\C[H_1(L; \Z)]$.  In reality, since there may be infinitely many discs we should work over the Novikov field $\Lambda$ rather than $\C$ and additionally weight each disc by $T^\text{area}$ to ensure that the count converges.  This makes $W$ into an element of a suitable completion $\Lambda \llangle H \rrangle_\Delta$ of $\Lambda[H_1(L; \Z)]$, and this element is exactly the superpotential appearing in \cref{propFOOOks}.

The presence of these discs reduces the $\Z$-grading on our categories to $\Z/2$, and makes most of the torus fibres Floer-theoretically trivial: only those corresponding to critical points of $W$ survive (see \cref{logDerivative}).  The prediction of mirror symmetry is now that (once it is rigorously defined) the Fukaya category $\mathcal{F}(X)$ should be equivalent to the derived category $\mathrm{DMF}(\check{Y}, W)$ of matrix factorisations of $(\check{Y}, W)$, and, in particular, that the Hochschild cohomologies of these two categories should coincide.

The Hochschild cohomology of the Fukaya category is expected to be isomorphic to the quantum cohomology of $X$ via the \emph{closed--open string map}
\[
\CO : QH^*(X) \rightarrow HH^*(\mathcal{F}(X))
\]
which we discuss in more detail in \cref{sscksSketch}.  That of the matrix factorisation category, meanwhile, should be the Jacobian ring of the superpotential $W$.  This has been proved in various situations (see the work of Dyckerhoff \cite{Dyckerhoff}, Segal \cite{Segal} and C\u ald\u araru--Tu \cite{CaldararuTu} for instance), but the present setting is slightly non-standard due to the completed rings involved.  This leads to the prediction that the quantum cohomology of $X$ should be isomorphic to the Jacobian ring of $W$ via a closed--open string map, which is essentially what \cref{corksIso} verifies.

If $X$ is non-compact then, as in \cref{WrappedCat}, we should instead work with the wrapped Fukaya category $\mathcal{W}(X)$.  We should also replace quantum cohomology with symplectic cohomology (the `wrapped' equivalent) in the closed--open map, which yields the informal conjectural isomorphism
\begin{equation}
\label{SHconj}
SH^*(X; \Lambda) \rightarrow (\text{completion of } \Lambda[H_1(L; \Z)]) / \clos (\diff W)
\end{equation}
stated in \cref{rmkLocJac}.  There is expected to be an \emph{acceleration} map $QH^*(X) \rightarrow SH^*(X)$ (and a corresponding functor $\mathcal{F}(X) \rightarrow \mathcal{W}(X)$; see \cite{RitterSmith}) and \cref{corksIso} identifies its domain as the right-hand side of \eqref{SHconj} but with the `numerator' replaced by its subring $\Lambda \llangle H \rrangle_\Delta$.

\subsection{Relation to other works}
\label{OtherWorks}

Batyrev \cite{BatyrevToricQH} and Givental \cite{GiventalICM} first suggested that when $X$ is compact and Fano $QH^*(X; \C)$ should be isomorphic to the Jacobian ring of a superpotential $W$ in $\C[y_1^{\pm 1}, \dots, y_n^{\pm 1}]$, which they explicitly wrote down.  This prediction became known as closed-string mirror symmetry, and in our language their $W$ coincides with the leading-order term $\lv_1 + \dots \lv_N$.  The Fano hypothesis is the algebro-geometric analogue of monotonicity.

Batyrev proved that the Jacobian ring is isomorphic to a combinatorially-defined ring---which we shall call the \emph{Batyrev quantum cohomology} of $X$, denoted $\QHB^*(X)$---given as the quotient of a polynomial ring by quantum Stanley--Reisner relations and certain linear relations.  He also sketched an argument that $QH^*_B(X)$ it is isomorphic to the usual quantum cohomology of $X$, and this was later proved by Givental \cite{GiventalToricCI} using torus-localisation on moduli spaces of holomorphic curves (made rigorous in \cite{GraberPandharipande}).  Combining these results therefore gives a proof of closed-string mirror symmetry for compact toric Fanos, but by the indirect method of computing each side separately and equating both with $\QHB^*(X)$.

\begin{rmk}
In our approach, relations between the $\lv_j$ in $\LHD$ arise from the geometry of a monoid $\Gamma_\R$ used to define it in \cref{sscCoeffRings}, and naturally translate into the quantum Stanley--Reisner relations.  Batyrev's linear relations are replaced by the generators of the ideal $(\sum_j \nu_j \hv_j)$, which are linear in the $\lv_j$ to leading order.
\end{rmk}

Subsequent to the original work of Givental, alternative proofs of the isomorphism between $QH^*(X)$ and $\QHB^*(X)$ in the Fano case have been given by Cieliebak--Salamon \cite{CieliebakSalamon} and McDuff--Tolman \cite{McDuffTolman}.  The former includes the hypothesis that the minimal Chern number of $X$ is at least $2$, but does not assume that $X$ is compact.  In both cases the heart of the argument is to show that the quantum Stanley--Reisner relations hold in $QH^*(X)$ so that one can define a homomorphism $\QHB^*(X) \rightarrow QH^*(X)$.  That this is an isomorphism then follows from a formal algebraic argument \cite[Lemma 5.1]{McDuffTolman} using the fact that the relations defining $\QHB^*(X)$ are perturbations of those defining the classical Stanley--Reisner presentation of $H^*(X)$.  Ritter has generalised the approach of McDuff--Tolman, based on the Seidel representation, to \emph{admissible} non-compact toric Fanos \cite{RitterFanoToric}, simplifying and extending previous computations of his using virtual localisation \cite{RitterNegative}.

Recently, Fukaya--Oh--Ohta--Ono \cite{FOOOMSToric} gave a more conceptual proof of closed-string mirror symmetry for arbitrary compact toric varieties along the lines of \cref{corksIso}, which we have already mentioned.  In their version, the generalised Jacobian ring appearing in \cref{corksIso} is replaced by a similar ring which they denote by $\Jac \mathfrak{PO}$ (they call the superpotential $\mathfrak{PO}$ rather than $W$).  After adapting McDuff--Tolman's argument to prove the quantum Stanley--Reisner relations and show that (roughly) $QH^*(X)$ is a quotient of $\QHB^*(X)$, they then show that $\dim \QHB^*(X) = \dim \Jac \mathfrak{PO}$ and deduce that $\ks$ is injective (surjectivity is relatively easy).  The equality of dimensions is established by giving a finite flat family of schemes which has both $\Spec QH^*(X)$ and $\Spec \Jac \mathfrak{PO}$ as fibres.  In the compact Fano case this equality can be deduced from Kushnirenko's theorem, as observed by Ostrover--Tyomkin \cite{OstroverTyomkin}, but this approach is rather ungeometric, and provides no map between $\Jac \mathfrak{PO}$ and $H^*(X)$.

We end by remarking that quantum cohomology of toric varieties has also been studied intensively, and largely independently, from an algebro-geometric perspective.  Many similar results have been proved in this setting, but at present their relationship with the analogous symplectic results is unclear.  For instance, Iritani \cite{IritaniBigQH} computed the big equivariant quantum cohomology of (possibly non-compact) toric varieties using the algebro-geometric version of the Seidel representation (see \cite{IritaniShiftOperators}), and his mirror map should be related to the superpotential of Fukaya--Oh--Ohta--Ono which appears in our results.  In algebraic geometry the Seidel representation exists for formal reasons, using torus localisation on moduli spaces, and does not require the delicate analysis carried out by Ritter in the symplectic case.  In a different direction, Gonz\'alez--Woodward \cite{GonzalezWoodwardToric} computed the quantum cohomology of compact toric \emph{orbifolds} using the quantum Kirwan map of \cite{WoodwardQuantumKirwanI}.  This is motivated by the study of symplectic vortices but again relies on algebro-geometric properties of moduli spaces.

\subsection{Acknowledgements}

I am grateful to Jonny Evans, Yank\i~Lekili, Ivan Smith and Richard Thomas for helpful comments and feedback, and to  Kenji Fukaya, Hiroshi Iritani, Kaoru Ono, Alex Ritter and Chris Woodward for useful conversations and correspondence about their work.  The exposition has benefited greatly from the criticisms of an anonymous referee.  This work was funded by EPSRC Grant EP/P02095X/1.

\section{The Kodaira--Spencer map}
\label{secks}

\subsection{Toric geometry background}
\label{sscToricBG}

For us, a toric variety $X$ is defined by a \emph{Delzant polyhedron} $\Delta \subset \mathfrak{t}^*$, where $\mathfrak{t}$ is the Lie algebra of an abstract $n$-torus.  Explicitly, if $A \cong \Z^n$ denotes the lattice in $\mathfrak{t}$ given by the kernel of the exponential map, then $\Delta$ is a subset of $\mathfrak{t}^*$ of the form
\begin{equation}
\label{polyhedronDefn}
\Delta = \{ x \in \mathfrak{t}^* : \ip{x}{\nu_j} \geq -\lambda_j \text{ for } j = 1, \dots, N\},
\end{equation}
where $\nu_1, \dots, \nu_N$ are elements of $A$ and $\lambda_1, \dots, \lambda_N$ are positive real numbers, such that the following conditions are satisfied (see \cite[Definition 1.2]{ProudfootToric} or the original paper of Delzant \cite{Delzant}): exactly $d$ facets meet at each codimension-$d$ face of $\Delta$, and the normals $\nu_j$ to these $d$ facets extend to a basis for the free abelian group $A$.  We will assume that none of the inequalities in \eqref{polyhedronDefn} are redundant, so that $\Delta$ has exactly $N$ facets.  We also assume that $\Delta$ has at least one vertex but reiterate that it need not be compact.

\begin{rmk}
\label{VertexRmk}
Suppose we didn't impose that condition that $\Delta$ has a vertex.  Let $A^*_\text{an}$ be the annihilator of $\nu_1, \dots, \nu_N$ inside the dual lattice $A^*$, and let $k$ denote its rank.  If $A^*_\text{comp}$ denotes a complement to $A^*_\text{an}$ then $A^*$ splits as $A^*_\text{an} \oplus A^*_\text{comp}$, and $\Delta$ correspondingly splits as $\Delta_\text{an} \oplus \Delta_\text{comp}$, where $\Delta_\text{an}$ is $A^*_\text{an} \otimes \R \cong \R^k$ and $\Delta_\text{comp} \subset A^*_\text{comp} \otimes \R$ is the Delzant polyhedron with normals given by $\nu_j \mod (A^*_\text{comp})^{\perp}$.  These normals have trivial annihilator in $A^*_\text{comp}$, so $\Delta_\text{comp}$ contains no affine line and hence must have at least one vertex.  Then $X$ decomposes as $(\C^*)^k \times X_\text{comp}$, where $X_\text{comp}$ is the toric variety defined by $\Delta_\text{comp}$.  In particular we see that $QH^*(X)$ is the tensor product of an exterior algebra on $k$ variables (of degree $1$) and $QH^*(X_\text{comp})$, which can be computed by our methods, so the assumption that $\Delta$ has a vertex is no real loss of generality from the perspective of computing quantum cohomology.
\end{rmk}

The construction of $X$ from $\Delta$ is described in the two previous references, and, briefly, is as follows.  Equip $\C^N$ (with coordinates $(w_1, \dots, w_N)$) with the symplectic form
\[
\frac{i}{2} \sum_{j=1}^N \diff w_j \wedge \diff \overline{w}_j,
\]
and consider the obvious action of the $N$-torus $T^N$ with moment map
\begin{equation}
\label{MomMap}
(w_1, \dots, w_N) \mapsto \lb\frac{1}{2}|w_1|^2 - \lambda_1, \dots, \frac{1}{2}|w_N|^2 - \lambda_N\rb \in \R^N \cong \mathrm{Lie}(T^N)^*.
\end{equation}
Let $K$ be the kernel of the homomorphism $\Z^N \rightarrow A$ which sends the $j$th basis vector to $\nu_j$, and let $T_K$ be the subtorus of $T^N = \R^N / \Z^N$ given by $(\R \otimes K) / K$.  Define $X$ to be the symplectic reduction of $\C^N$ with respect to this $T_K$-action, at the zero level of the moment map restricted from \eqref{MomMap}.

The space $X$ is a symplectic manifold with symplectic form $\omega'$ induced from $\C^N$.  It also inherits an action of the quotient $T^N / T_K$, which is precisely the abstract $n$-torus from above, with Lie algebra $\mathfrak{t}$.  The image of the moment map for this action is $\Delta$, and in \cref{sscOverview} we defined the toric divisors $D_1, \dots, D_N$ to be the moment map preimages of the facets $F_1, \dots, F_N$.  These divisors define classes in Borel--Moore homology $\HBM_*(X; \Z)$, and their Poincar\'e duals are $H_1, \dots, H_N$ in $H^2(X; \Z)$.  The neighbourhood of a point in $\Delta$ which lies on exactly $k$ facets looks like a neighbourhood of $0$ in $\R_{\geq 0}^k \times \R^{n-k}$, and its moment map preimage looks like a neighbourhood of $\{(0, \dots, 0)\} \times \mathrm{U}(1)^{n-k}$ in $\C^k \times (\C^*)^{n-k}$.  In this preimage, the toric divisors are the hyperplanes defined by setting each of the first $k$ coordinates equal to zero in turn.  In particular, the intersection of any collection of toric divisors is transverse.

The algebro-geometric description of $X$ is as follows (see \cite[Section 1.4]{ProudfootToric}).  Consider the cone over $\Delta \times \{1\}$ inside $\mathfrak{t}^* \times \R_{\geq 0}$ and let $\Sigma$ denote its closure.  Let $\Sigma_\Z$ be the monoid given by the intersection of $\Sigma$ with the lattice $A^* \times \Z_{\geq 0}$, and consider the monoid ring $\C[\Sigma_\Z]$, graded by the $\Z_{\geq 0}$ component of $\Sigma_\Z$.  As a variety, $X$ is given by $\Proj \C[\Sigma_\Z]$.  Its first Chern class is $H_1 + \dots + H_N$, and the symplectic form $\omega'$ is K\"ahler for the natural complex structure.

The superpotential $W$ and the Kodaira--Spencer map $\ks$ are both defined by counting pseudoholomorphic discs in $X$ bounded by a torus orbit $L$, and for simplicity we take $L$ to be the moment map preimage of $0$.  This is why we assumed that the $\lambda_j$ are all strictly positive; by translating $\Delta$ (which does not affect $X$) this can always be achieved.  Note that there is a canonical identification between $H_1(L; \Z)$ and the lattice $A \subset \mathfrak{t}$, so an element of $A$ can be viewed both as a loop on $L$ and as the generator of a one-parameter subgroup of the torus which acts on $L$, and the loop is the orbit of this subgroup with period $2\pi$.

The virtual dimension of the moduli space of parametrised pseudoholomorphic discs in a class $\beta \in H_2(X, L; \Z)$ is given by $n + \mu(\beta)$, where $\mu(\beta)$ is the \emph{(Maslov) index} of $\beta$---this is an even integer which is a relative version of twice the Chern number of a closed curve, and in the present toric context is given by twice the intersection number of $\beta$ with the sum of the toric divisors.  In particular, unparametrised discs with a boundary marked point constrained to a fixed $p \in L$ are rigid when they have index $2$ (the boundary marked point adds $1$ to the dimension of the moduli space, and quotienting by reparametrisation subtracts $3$).

With respect to the standard complex structure, there are $N$ families of smooth holomorphic discs of index $2$, and they are all Fredholm regular.  Each family comprises a single $T^N / T_K$-orbit, and a representative of the $j$th family is given by
\[
u_j(z) = e^{-i\nu_j \log z} \cdot p,
\]
for $z$ in the closed unit disc in $\C$ (when $z=0$ the right-hand side should be interpreted as the obvious limit).  Here $e^\cdot$ is the exponential map from the complexification of $\mathfrak{t}$ to the complexified torus $T_\C^N / (T_K)_\C$, which acts holomorphically on $X$.  The point $u_j(0)$ lies in the divisor $D_k$ if and only if $j=k$.  Note that the boundary of $u_j$ is the orbit of the group generated by $\nu_j$, so its homology class is exactly $\nu_j \in A = H_1(L; \Z)$.  The area of $u_j$ can be computed to be
\[
-2\pi\ip{\mu(u_j(0))}{\nu_j} = 2\pi \lambda_j.
\]
The reader is referred to the work of Cho--Oh \cite{ChoOh} for proofs and more details.

\begin{rmk}
\label{SympForm}
To avoid factors of $2\pi$ littering our discussion, we work with the symplectic form $\omega$ on $X$, defined to be $\omega' / 2\pi$.
\end{rmk}

Topologically, the important property of $X$ is the Stanley--Reisner presentation of its cohomology ring.  We will see in \cref{propSRPresentation} that this can be deduced from \cref{propFOOOks}, but we will independently need the well-known fact that the cohomology ring is generated by the classes $H_j$.  This is usually proved as a corollary of the Stanley--Reisner presentation, so for completeness we give a separate short proof:

\begin{lem}
\label{DjGenerate}
The classical cohomology ring $H^*(X; \Z)$ is generated by the classes $H_j$ Poincar\'e dual to the toric divisors.
\end{lem}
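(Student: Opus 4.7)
My plan is to establish the lemma by constructing a Białynicki--Birula--style cell decomposition of $X$ into affine cells whose closures are transverse intersections of the toric divisors $D_j$. Granted such a decomposition, the Borel--Moore fundamental classes of these closures form a $\Z$-basis of $\HBM_*(X; \Z)$, since the cellular differentials vanish when all cells have even real dimension, and Poincaré duality (for the oriented manifold $X$) converts this into a $\Z$-basis of $H^*(X; \Z)$ whose elements are monomials in the $H_j$'s. This immediately yields the lemma.

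To construct the decomposition, I would pick a rational $\xi \in A \subset \mathfrak{t}$ satisfying three conditions: (i) $\ip{\xi}{\nu_j} \neq 0$ for every $j$; (ii) $\ip{\cdot}{\xi}$ takes distinct values at the (finitely many) vertices of $\Delta$; and (iii) $\xi$ lies in the interior of the cone dual to the recession cone of $\Delta$. Condition (iii) is possible precisely because $\Delta$ has a vertex, which forces its recession cone to be strongly convex; this condition ensures the moment-map component $\mu_\xi \mc X \to \R$ is proper and bounded below, so its sublevel sets are compact. The conditions (i)--(ii) ensure that the critical set of $\mu_\xi$ is the finite set of $T^N/T_K$-fixed points, in natural bijection with the vertices of $\Delta$. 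Let $\lambda_\xi \mc \C^* \to T^N_\C/(T_K)_\C$ be the algebraic one-parameter subgroup integrating $\xi$, and for each vertex $v$ define
\[
X_v = \lb x \in X : \lim_{t \to 0} \lambda_\xi(t) \cdot x = v \rb.
\]
In the Delzant local chart $\C^n$ at $v$, where $\C^*$ acts diagonally with weights $a_i = \ip{\xi}{\nu_{j_i}}$ on the coordinates indexed by the $n$ facets $F_{j_1}, \dots, F_{j_n}$ meeting at $v$, the cell $X_v$ is cut out by the vanishing of those coordinates with repelling weight; in particular it is a complex linear subspace in this chart. Compactness of the sublevel sets of $\mu_\xi$ ensures that the $X_v$ cover $X$ and assemble into a proper cell complex with cells of even real dimension only.

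Each closure $\overline{X_v}$ is globally a toric subvariety of $X$, given as the transverse intersection of exactly those toric divisors $D_{j_i}$ for which $a_i$ has the repelling sign at $v$; hence its Borel--Moore fundamental class is Poincaré dual to the corresponding monomial in the $H_j$'s, which proves the claim. I expect the main obstacle to be verifying the Białynicki--Birula picture in the non-compact setting: the decomposition is classical for projective toric varieties, and the extra work here amounts to checking that every orbit of $\lambda_\xi$ admits a limit as $t \to 0$ inside $X$, which is exactly what the properness of $\mu_\xi$ secured by condition (iii) guarantees.
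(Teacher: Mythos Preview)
Your approach is correct and runs closely parallel to the paper's, though packaged through the Bia\l{}ynicki--Birula formalism rather than a direct combinatorial argument on $\Delta$. The paper also chooses a generic element of $\mathfrak{t}$ (a positive combination $\nu = \lambda_1\nu_1 + \dots + \lambda_n\nu_n$ of the normals at a fixed vertex, with $\Q$-independent coefficients), orders the vertices of $\Delta$ by the values of $\ip{\cdot}{\nu}$, and then builds a filtration $\Delta = \Delta_0 \supset \Delta_1 \supset \dots \supset \emptyset$ by successively deleting the open star of the current minimising vertex; the preimage of each deleted star is an affine space, and the inclusion/restriction long exact sequence in Borel--Moore homology does the rest. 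Your $\xi$ plays the same role as the paper's $\nu$, and your BB cells are precisely the preimages of these open stars. The paper's route is more self-contained---it never invokes BB as a black box and works entirely on the polyhedron---while yours is slicker for readers fluent in BB and makes the non-compact case (via properness of $\mu_\xi$, secured by condition (iii)) more geometrically transparent.

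One notational slip to fix: in condition (i) and in your formula $a_i = \ip{\xi}{\nu_{j_i}}$ for the weights, both $\xi$ and $\nu_{j_i}$ lie in $\mathfrak{t}$, so this pairing is undefined. What you actually want for the weight on the $i$th coordinate at $v$ is the coefficient of $\nu_{j_i}$ when $\xi$ is expanded in the basis $\nu_{j_1}, \dots, \nu_{j_n}$ (equivalently $\ip{\nu_{j_i}^*}{\xi}$ for the dual basis in $\mathfrak{t}^*$), and condition (i) should require these coefficients to be nonzero at every vertex. This is cosmetic; the argument is sound.
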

\begin{proof}
We will prove the equivalent statement that the Borel--Moore homology is spanned by the classes of invariant subvarieties (preimages of faces of the moment polyhedron).  To deduce the result we actually want, simply note that each invariant subvariety is a transverse intersection of toric divisors $D_j$, so its Poincar\'e dual is the cup product of the corresponding $H_j$.  The key step is to produce recursively a chain of subcomplexes (meaning unions of faces)
\begin{equation}
\label{SubCxes}
\Delta= \Delta_0 \supset \Delta_1 \supset \dots \supset \Delta_m = \emptyset
\end{equation}
of $\Delta$ such that each $\Delta_{j+1}$ is obtained by taking a vertex $x_j$ in $\Delta_j$, near to which $\Delta_j$ looks like a neighbourhood of a vertex in a simplex (of dimension $n_j$, say), and deleting the interior of the star of $x_j$.

Given such a chain of subcomplexes, let $X_j$ denote the moment map preimage of $\Delta_j$ in $X$.  Then $X_j \setminus X_{j+1}$ is homeomorphic to $\C^{n_j}$ and the inclusion/restriction exact sequence in Borel--Moore homology (over $\Z$) \cite[IX.2.1]{IversenSheaves} yields
\[
\dots \rightarrow \HBM_{i+1}(\C^{n_j}) \rightarrow \HBM_i(X_{j+1}) \rightarrow \HBM_i(X_j) \rightarrow \HBM_i(\C^{n_j}) \rightarrow \cdots.
\]
Supposing that $\HBM_*(X_{j+1})$ is concentrated in even degree, we deduce that the inclusion $X_{j+1} \hookrightarrow X_j$ induces an injection in degree $2n_j$ with cokernel $\Z$, and induces isomorphisms in all other degrees.  We are then down by decreasing induction on $n$ if we can show that the pullback on $\HBM_{2n_j}$ induced by the inclusion of $X_j \setminus X_{j+1} \cong \C^{n_j}$ into the toric $2n_j$-manifold given by the closure $Y_j$ of $X_j \setminus X_{j+1}$ in $X$ is an isomorphism (this shows that the cokernel in degree $2n_j$ comes from the fundamental class of the invariant subvariety $Y_j$).  This result follows from another application of the inclusion/restriction exact sequence, using the fact that $Y_j \cap X_{j+1}$ has Borel--Moore homology concentrated in degrees at most $2(n_j-1)$.

It is therefore left to build the chain of subcomplexes \eqref{SubCxes}.  In order to do this, fix a vertex of $\Delta$ and reorder the $\nu_j$ so that the normals to the facets at this vertex are $\nu_1, \dots, \nu_n$.  Choose positive real numbers $\lambda_1, \dots, \lambda_n$ which are linearly independent over $\Q$ and let $\nu$ denote $\lambda_1 \nu_1 + \dots + \lambda_n \nu_n$.  At the $j$th step, let $x_j$ be the unique point in $\Delta_j$ which minimises $\ip{x}{\nu}$.  This is necessarily a vertex of $\Delta_j$, and $x_0$ is precisely the vertex with normals $\nu_1, \dots, \nu_n$.  The only thing we need to check is that $\Delta_j$ looks like a simplex near $x_j$, and to see this observe that $\Delta$ looks like an $n$-simplex near $x_j$, and to get to $\Delta_j$ we simply restrict to faces which are contained in the closed half space on which $\ip{\cdot}{\nu} \geq \ip{x_j}{\nu}$.
\end{proof}

This arguments actually gives a bit more information:

\begin{lem}
\label{rmkHRank}
The $\Z$-module $H^*(X; \Z)$ is free of rank equal to the number of vertices of $\Delta$.  Moreover, the rank of $H^2(X; \Z)$ is $N-n$.
\end{lem}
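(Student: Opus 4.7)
The plan is to piggyback on the inductive filtration from \cref{DjGenerate}. For the first part, I would observe that the inclusion--restriction long exact sequences appearing in that proof split at every step, yielding by induction that $\HBM_*(X_j; \Z)$ is a free $\Z$-module, concentrated in even degrees, which gains exactly one $\Z$ generator in degree $2n_j$ as one passes backwards from $X_{j+1}$ to $X_j$. Since the filtration has one stage per vertex of $\Delta$, this makes $\HBM_*(X; \Z)$ free of total rank equal to the number of vertices, and Poincar\'e--Lefschetz duality for the oriented $2n$-manifold $X$ transfers the statement to $H^*(X; \Z)$.

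For the rank of $H^2$---equivalently of $\HBM_{2n-2}(X; \Z)$---this count is the number of stages with $n_j = n - 1$, or equivalently the number of vertices of $\Delta$ at which exactly one incident edge is descending under the Morse function $\nu$ from \cref{DjGenerate}. My plan is to show this count equals $N - n$ by setting up a bijection between such vertices and the facets of $\Delta$ that do not contain $x_0$: precisely $n$ facets pass through $x_0$ by the Delzant condition at that vertex, leaving $N - n$ others. The bijection would send a vertex $v$ with exactly one descending edge to the facet $F_v$ opposite that edge, and a facet $F$ to the minimum $v_F$ of $\nu|_F$.

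To make $v_F$ well defined in the non-compact setting I would use that $F$ is itself a polyhedron with at least one vertex (since $\Delta$ does), on which $\nu$ is bounded below as it is on $\Delta$, and that the $\Q$-linear independence of $\lambda_1, \dots, \lambda_n$ forces the minimum to be unique. Checking that the two maps are mutually inverse reduces to linearity of $\nu$: at $v$ the $n-1$ ascending edges span $F_v$ locally, so $v$ is a local and hence global minimum of $\nu|_{F_v}$, whilst at $v_F$ the unique edge leaving $F$ cannot also be ascending, else $v_F$ would be a local and hence global minimum of $\nu$ on all of $\Delta$, forcing $v_F = x_0 \notin F$. The only real technical obstacle is this non-compact well-definedness of $v_F$; once that is in hand, the combinatorics of the bijection is immediate and the count $N-n$ drops out.
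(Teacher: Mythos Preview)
Your proposal is correct and follows essentially the same approach as the paper: both arguments read off the first statement directly from the filtration in \cref{DjGenerate}, and both identify the $H^2$-generators with the $N-n$ facets not through $x_0$. Your explicit bijection $F \mapsto v_F = \arg\min(\nu|_F)$ is exactly the vertex at which the interior of $F$ is deleted in the paper's filtration, so your more detailed Morse-theoretic phrasing simply unpacks the paper's terser assertion that ``the remaining $N-n$ facets are removed one at a time, and on each occasion we gain a generator in $\HBM_{2n-2}$.''
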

\begin{proof}
The first part is immediate from the proof of \cref{DjGenerate}.  To prove the second part, note that in passing from $\Delta_0$ to $\Delta_1$ we remove $n$ facets of $\Delta$ (along with the interior of $\Delta$), and this step gives rise to the fundamental class in $\HBM_{2n}(X; \Z) \cong H^0(X; \Z)$.  The remaining $N-n$ facets are removed one at a time, and on each occasion we gain a generator in $\HBM_{2n-2}(X; \Z) \cong H^2(X; \Z)$; moreover, all generators of $H^2(X; \Z)$ arise in this way.
\end{proof}

Since the $H_j$ are represented by cycles $D_j$ in the complement of $L$, we can actually think of them as classes in $H^2(X, L; \Z)$, and we will often do this without explicit warning.  We then have the following:

\begin{cor}
\label{DiscDivisorPairing}
The classes $[u_j]$ and $H_j$ form free free bases for $H_2(X, L; \Z)$ and $H^2(X, L; \Z)$ respectively, and are dual under the natural pairing
\[
H^2(X, L; \Z) \times H_2(X, L; \Z) \rightarrow \Z.
\]
\end{cor}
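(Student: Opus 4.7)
The first step is to observe that each $H_j$ admits a canonical lift to $H^2(X, L; \Z)$, since $D_j$ is disjoint from $L$ (as $L$ is the moment-map fibre over the interior point $0 \in \Delta$ while $D_j$ lies over the facet $F_j$), so the pairing $\ip{H_j}{[u_k]}$ is well-defined and equals the intersection number $u_k \cdot D_j$. From the explicit formula $u_k(z) = e^{-i\nu_k \log z} \cdot p$ recalled in \cref{sscToricBG}, $u_k$ meets $D_j$ transversely at the single point $u_k(0)$ when $j=k$ and otherwise avoids $D_j$ entirely, giving $\ip{H_j}{[u_k]} = \delta_{jk}$.  Once the pairing matrix is identified with the identity, what remains is to show that both $H_2(X,L;\Z)$ and $H^2(X,L;\Z)$ are free abelian of rank $N$ in perfect duality; the corollary then follows by elementary linear algebra.

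I would establish the ranks via the long exact sequence of the pair $(X,L)$ in integral homology. The cell decomposition produced in the proof of \cref{rmkHRank} has only even-dimensional cells, so $H_1(X;\Z) = 0$ and $H_2(X;\Z)$ is free of rank $N-n$; moreover both $X$ and $L$ are path-connected, so a short segment of the exact sequence forces $H_1(X,L;\Z) = 0$.  The crucial step, and the main obstacle, is to show that the map $H_2(L;\Z) \to H_2(X;\Z)$ induced by inclusion vanishes: any element in its image pairs trivially with every $H_j$ since $H_j$ restricts to zero on $L$, hence trivially with every class in $H^2(X;\Z)$ because the $H_j$ $\Z$-span that group by \cref{DjGenerate}, and so vanishes by the perfect pairing $H^2(X;\Z) \cong \Hom(H_2(X;\Z),\Z)$ which follows from $H_1(X;\Z) = 0$ via universal coefficients. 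The sequence then collapses to
\[
0 \to H_2(X;\Z) \to H_2(X, L; \Z) \to H_1(L;\Z) \to 0,
\]
exhibiting $H_2(X,L;\Z)$ as an extension of free abelian groups of ranks $N-n$ and $n$, and hence free of rank $N$.  A second application of universal coefficients, using $H_1(X,L;\Z) = 0$, identifies $H^2(X,L;\Z)$ with $\Hom(H_2(X,L;\Z), \Z)$, also free of rank $N$, and in perfect duality under the natural pairing.

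To conclude, the homomorphism $H_2(X,L;\Z) \to \Z^N$ defined by pairing against $H_1, \dots, H_N$ sends each $[u_k]$ to the standard basis vector $e_k$, so it is surjective and split by the inclusion of the free rank-$N$ subgroup generated by the $[u_j]$.  Because $H_2(X,L;\Z)$ itself is free of rank $N$, the kernel of this homomorphism must vanish, so the $[u_j]$ form a basis of $H_2(X,L;\Z)$; the corresponding statement for the $H_j$ now follows immediately from the perfect duality.
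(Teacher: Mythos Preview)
Your proof is correct and follows essentially the same strategy as the paper: establish $\ip{H_j}{[u_k]}=\delta_{jk}$, use the long exact sequence of the pair together with \cref{rmkHRank} to pin down the rank of $H_2(X,L;\Z)$ as $N$, and conclude by linear algebra over $\Z$. The only real difference is that you explicitly prove $H_2(L;\Z)\to H_2(X;\Z)$ vanishes (using that each $H_j$ restricts trivially to $L$) in order to obtain freeness and the exact rank of $H_2(X,L;\Z)$, whereas the paper works on the cohomology side, splits off the span of the $H_j$ using the pairing, and bounds the rank of the complement via the exact sequence; your version makes the freeness of $H_2(X,L;\Z)$ more transparently justified than the paper's brief appeal to the long exact sequence.
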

\begin{proof}
First note that from the first part of \cref{rmkHRank} and the universal coefficient theorem, all homology and cohomology groups of $X$ are free $\Z$-modules, and that cohomology is dual to homology.  The same is also true of the torus $L$, and hence (by the long exact sequence) of the pair $(X, L)$ as well.

For each $k$ we have seen that the disc $u_k$ meets $D_j$ if and only if $j=k$, and that its Maslov index (twice its intersection with $D_1+\dots+D_N$) is $2$, so we have
\[
\ip{H_j}{[u_k]} = \delta_{jk}.
\]
This tells us that $H_2(X, L; \Z)$ and $H^2(X, L; \Z)$ split as
\begin{gather*}
\label{H2Split}
H_2(X, L; \Z) = \lspan{[u_1]} \oplus \dots \oplus \lspan{[u_N]} \oplus \lspan{H_1, \dots, H_N}^\perp
\\ H^2(X, L; \Z) = \lspan{H_1} \oplus \dots \oplus \lspan{H_N} \oplus \lspan{[u_1], \dots, [u_N]}^\perp,
\end{gather*}
and by the previous paragraph the two $\perp$ summands are free $\Z$-modules of equal rank.  Considering the exact sequence
\[
\dots \rightarrow H^1(L; \Z) \rightarrow H^2(X, L; \Z) \rightarrow H^2(X; \Z) \rightarrow \cdots
\]
and applying the second part of \cref{rmkHRank} we conclude that these summands actually vanish.
\end{proof}

\subsection{Outline}

The purpose of the remainder of this section is to describe the geometry of the Kodaira--Spencer map $\ks$ appearing in \cref{propFOOOks}.  We follow Fukaya--Oh--Ohta--Ono \cite{FOOOMSToric}, who first defined it in its full generality (although they nominally restrict their attention to compact $X$) and introduced the name and the notation $\ks$.  For monotone $X$ the same idea also appears in the work of Biran--Cornea \cite[Section 7.3]{BCEG}, and in \cref{sscMonotonicity} we discuss the simplifciations and refinements that occur in this case.

The major steps are as follows:
\begin{enumerate}[(i)]
\item\label{itmVFCs} Construct virtual fundamental chains on moduli spaces of pseudoholomorphic curves so that counts of such curves can be defined.  The cycles must satisfy certain compatibility conditions in order to prove algebraic relations between the counts using cobordisms between the corresponding moduli spaces.
\item\label{itmRings} Define the topological ring $\LHD$ in which the disc counts are supposed to live and verify that the counts do indeed give rise to well-defined elements of this ring.  This involves showing that the counts are sums of monomials contained in the ring, and that these sums converge.
\item\label{itmSuperpot} Define the superpotential $W$ and its summands $W_1, \dots, W_N$.
\item\label{itmLeading} Verify that the leading-order term of each $W_j$, meaning its reduction modulo $\Lambda_+$, is $\lv_j$.
\item\label{itmks} Define the map $\ks$ and show that it is a ring homomorphism and that $\ks(H_j) = W_j$.
\end{enumerate}

We do not address \eqref{itmVFCs} at all and simply take it as a black box provided by \cite{FOOOMSToric}.  It involves difficult equivariant transversality arguments and represents a major technical achievement.  Instead, we begin in \cref{sscksSketch} by explaining \eqref{itmSuperpot} and \eqref{itmks}, after a broader discussion to provide some motivation and context for the construction of $\ks$.  Then in \cref{sscCone,sscCoeffRings} we cover \eqref{itmRings}, and finally in \cref{sscLeading} we deal with \eqref{itmLeading}.  Throughout, we focus on the geometric intuition, with \eqref{itmVFCs} providing the rigorous justification that the necessary curves can be counted and the cobordism arguments made precise.

\subsection{The construction}
\label{sscksSketch}

Modulo technical foundations, the (compact) Fukaya category $\mathcal{F}(X)$ of a symplectic manifold $X$ is an $A_\infty$-category over $\Lambda$.  Its objects are compact Lagrangian submanifolds $L$ of $X$, satisfying technical conditions (for example, weakly unobstructed, orientable and relatively spin) and decorated with various extra data (say, a weak bounding cochain, orientation and relative spin structure, and local system), with morphisms given by Floer cochain complexes.  The closed--open string map is a (still largely conjectural) $\Lambda$-algebra homomorphism $\CO$ from the quantum cohomology of $X$ to the Hochschild cohomology of $\mathcal{F}(X)$ \cite{SeidelICM}.  By projecting the Hochschild complex to its length-zero part we obtain for each object $\mathbf{L}$ (denoting a Lagrangian satisfying the necessary conditions and equipped with the extra choices) a $\Lambda$-algebra homomorphism
\[
\CO^0 : QH^*(X; \Lambda) \rightarrow HF^*(\mathbf{L}, \mathbf{L}; \Lambda),
\]
to the endomorphism algebra of $\mathbf{L}$ in the cohomological category $H \mathcal{F}(X)$.

Given a cohomology class $\alpha$ on $X$ which is Poincar\'e dual to a cycle $Z$, one considers the moduli space of pseudoholomorphic discs which map an interior marked point to $Z$.  The class $\CO^0(\alpha)$ is then, roughly speaking, defined to be Poincar\'e dual to the cycle on $L$ swept from this moduli space by a boundary marked point, weighted by the monodromy of the local system.  The fact that this is a ring homomorphism is proved by a cobordism argument, schematically illustrated in \cref{figDiscDeg} (which is essentially a simplified version of \cite[Fig.~2]{SherFan}).
\begin{figure}[ht]
\centering
\begin{tikzpicture}[blob/.style={circle, draw=black, fill=black, inner sep=0, minimum size=\blobsize}, x=1cm, y={(-0.05cm, 0.32cm)}, z={(0cm, 1cm)}]
\def\blobsize{1.5mm}
\def\rad{2}
\def\prad{1}
\def\srad{1.2} 
\def\drad{1.6} 
\def\sdrad{1.4} 
\def\theta{60}
\def\phi{80}

\draw (0, 0, 0) circle[radius=\rad];
\draw (\rad, 0, 0) node[blob](p){};
\foreach \n in {0, 1}
{
\draw ($\prad*cos(\phi+\n*180)*(1, 0, 0)+\prad*sin(\phi+\n*180)*(0, 1, 0)$) node[blob](a\n){};
}
\draw ($(p)+(0.7, 0)$) node{output};
\draw ($(a0)+(-0.35, -0.1)$) node{$Z_1$};
\draw ($(a1)+(-0.35, -0.1)$) node{$Z_2$};

\begin{scope}[shift={(-4.8, -6)}]
\draw (0, 0, 0) circle[radius=\rad];
\draw (\rad, 0, 0) node[blob](p){};

\foreach \n in {0}
{
\draw ($\srad*cos(\phi+\n*180)*sin(\theta)*(1, 0, 0)+\srad*sin(\phi+\n*180)*sin(\theta)*(0, 1, 0)+\srad*cos(\theta)*(0, 0, 1)+(0, 0, \srad)$) node[blob](a\n){};
}

\draw ($(a0)+(-0.35, -0.1)$) node{$Z_1$};

\sphere{(0, 0, \srad)}{\srad}

\foreach \n in {1}
{
\draw ($\srad*cos(\phi+\n*180)*sin(\theta)*(1, 0, 0)+\srad*sin(\phi+\n*180)*sin(\theta)*(0, 1, 0)+\srad*cos(\theta)*(0, 0, 1)+(0, 0, \srad)$) node[blob](a\n){};
}

\draw ($(p)+(0.7, 0)$) node{output};
\draw ($(a1)+(-0.35, -0.1)$) node{$Z_2$};
\end{scope}

\begin{scope}[shift={(4.2, -4)}]

\draw ($1.8*\sdrad*cos(\phi)*(1, 0, 0)+1.8*\sdrad*sin(\phi)*(0, 1, 0)$) node[blob](a0){} circle[radius=0.8*\sdrad];
\draw (0, 0) circle[radius=\sdrad];
\draw ($1.8*\sdrad*cos(\phi+180)*(1, 0, 0)+1.8*\sdrad*sin(\phi+180)*(0, 1, 0)$) node[blob](a1){} circle[radius=0.8*\sdrad];

\draw (\sdrad, 0, 0) node[blob](p){};
\draw ($(p)+(0.7, 0)$) node{output};
\draw ($(a0)+(-0.35, -0.1)$) node{$Z_1$};
\draw ($(a1)+(-0.35, -0.1)$) node{$Z_2$};
\end{scope}

\draw[dashed, ->] (-2.15, -1) -- (-3.15, -2);
\draw[dashed, ->] (2.15, -1) -- (3.15, -2);

\end{tikzpicture}
\caption{$\CO^0$ is a ring homomorphism.\label{figDiscDeg}}
\end{figure}
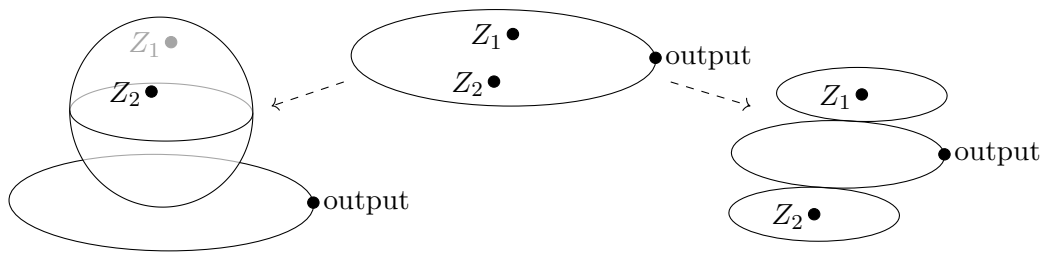
We start with a disc with two interior marked points which are constrained to cycles $Z_1$ and $Z_2$, Poincar\'e dual to cohomology classes $\alpha_1$ and $\alpha_2$, and one boundary marked point which is the output, as shown in the centre of the diagram.  To the left, the interior marked points are brought together and bubble off, computing $\CO^0$ of the quantum product $\alpha_1 \qcup \alpha_2$.  To the right, meanwhile, the interior marked points are slid to the boundary of the disc, where they bubble off to give the Floer product of $\CO^0(\alpha_1)$ and $\CO^0(\alpha_2)$.  Counting boundary components of this cobordism proves that $\CO^0(\alpha_1 \qcup \alpha_2) = \CO^0(\alpha_1) \qcup \CO^0(\alpha_2)$ modulo bubbling, but the contributions from bubbled configurations are Floer coboundaries so vanish in $HF^*(\mathbf{L}, \mathbf{L}; \Lambda)$.

Now restrict to the case where $X$ is a toric variety and $L$ is our chosen toric fibre.  To be precise, we equip $L$ with the zero weak bounding cochain, an arbitrary orientation, the \emph{standard} relative spin structure as defined in \cite[Section 9]{ChoOh} or \cite[Definition 4.2.4]{SmDCS}, and the trivial local system.  The fact that zero \emph{is} a weak bounding cochain is a non-trivial result, but is beside the point of the present motivational discussion.  Choosing orientations just fixes $\Z/2$-gradings of Floer groups between different Lagrangians \cite{SeidelGr}, but gradings will be irrelevant to us; in any case, we shall only consider the Floer theory of $L$ with itself, and this is \emph{canonically} $\Z/2$-graded.  The choice of spin structure determines the signs of the disc counts, and using the standard one ensures that the basic discs $u_1, \dots, u_N$ all contribute positively \cite[Appendix B, Sections B.4 and B.5]{SmDCS}.

\begin{rmk}
There are different approaches to the construction of (virtual) fundamental chains on spaces of pseudoholomorphic curves.  The classical method is to deform the Cauchy--Riemann equation by varying the almost complex structure and incorporating perturbation terms, so that the moduli spaces become bona fide smooth manifolds.  In general, however, this is not possible and one must endow the (generally singular) moduli spaces with some extra data which encodes the cokernel of the defining operator, for example a Kuranishi structure, as introduced by Fukaya--Ono \cite{FukayaOno} (see \cite{PardonVFC} for another alternative).  This is the approach taken by Fukaya--Oh--Ohta--Ono in \cite{FOOObig} and subsequent work (including \cite{FOOOtorI,FOOOMSToric}), and hence is implicitly in the background of the present discussion.  Compared with the classical approach, it has the advantage that one may work with a fixed almost complex structure, at the expense of considering possibly-nodal stable maps as opposed to smooth ones, and until \cref{secFurtherDiscussion} we shall always use the standard integrable complex structure on our toric variety $X$.  For this reason we will drop the `pseudo' from `pseudoholomorphic'.
\end{rmk}

Fix a base point $p \in L$, and for a homotopy class $\beta \in \pi_2(X, L, p)$ consider the moduli space of holomorphic stable discs
\begin{multline}
\label{MbetaDefn}
\mathcal{M}_\beta = \{ u : (\Sigma, \pd \Sigma, *) \rightarrow (X, L, p) : (\Sigma, \pd \Sigma, *) \text{ is a possibly-nodal disc} \\ \text{with boundary marked point $*$, and $u$ is a holomorphic stable map representing } \beta \}.
\end{multline}
As discussed in \cref{sscToricBG}, this space has virtual dimension is $\mu(\beta) - 2$, so when $\beta$ has index $2$ the discs are rigid and we can count them by integrating $1$ against the virtual fundamental chain on the moduli space.  For such $\beta$ we define $W_\beta \in \Lambda_0[H_1(L; \Z)]$ by
\[
W_\beta = (\# \mathcal{M}_\beta) T^{\ip{\omega}{\beta}} \tau^{\pd \beta},
\]
where $\ip{\omega}{\beta}$ is the area of $\beta$ and $\tau$ is a formal variable whose exponents records the $H_1(L; \Z)$ class.  To avoid confusion we define $W_\beta$ to be $0$ when $\mu(\beta) \neq 2$.

By a positivity-of-intersections argument, the $W_\beta$ all lie in a certain subring of $\Lambda_0 [H_1(L; \Z)]$ and their sum converges in its completion $\LHD$ which we describe in \cref{sscCoeffRings}.  We define
\[
W = \sum_\beta W_\beta \text{\quad and \quad} W_j = \sum_\beta (D_j \cdot \beta) W_\beta
\]
for $j=1, \dots, N$.  Since the only discs which contribute have index $2$, and hence have intersection number $1$ with $D_1 + \dots + D_N$, we see that
\[
W = W_1 + \dots + W_N.
\]
This formalises the description of $W$ as a boundary-weighted disc count given in \cref{sscOverview}.

By similarly weighting counts of discs by the homology classes of their boundaries we can make sense of the self-Floer cohomology of $L$ over the coefficient ring $\LHD$.  This is discussed in the monotone case in \cite[Section 2.2.4]{BCQS}.  Explicitly, using Fukaya--Oh--Ohta--Ono's canonical de Rham model, in which the Floer complex is given by the de Rham cohomology
\[
CF^*_\mathrm{can} (L, L; \LHD) \coloneqq H^*_\mathrm{dR} (L; \R) \otimes_\R \LHD,
\]
the Floer differential $\diff$ applied to a class $b \in H^1(L; \R)$ gives $W^b$ times the unit $1_L$, where
\[
W^b = \sum_\beta \ip{b}{\pd \beta} W_\beta
\]
is the count of rigid discs through a generic point of $L$, weighted by area, boundary homology class, and the pairing of the boundary class with $b$.

This has the following important property:

\begin{lem}[{Corresponding to \cite[Proposition 2.4.16]{FOOOMSToric}}]
\label{lemHF}
The $\LHD$-subalgebra of
\[
HF^*(L, L; \LHD)
\]
generated by the unit $1_L$ is
\[
\LHD \Big/ \Big(\sum_{j=1}^N \nu_j W_j \Big).
\]
\end{lem}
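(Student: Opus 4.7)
The plan is to work in the canonical de Rham model, where the Floer complex is $CF^*_\mathrm{can}(L, L; \LHD) = H^*_\mathrm{dR}(L; \R) \otimes_\R \LHD$, and to identify its differential as a Koszul-type differential whose $0$th cohomology is $\LHD / (\sum_j \nu_j W_j)$. Since $1_L$ is the unit of the Floer product, the $\LHD$-subalgebra it generates is simply the image of the $\LHD$-module map $c \mapsto c \cdot 1_L$ from $\LHD$ to $HF^*(L, L; \LHD)$, which lands in $HF^0$, so it suffices to compute the kernel of this map.

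I would begin by rewriting $W^b$ in terms of the $W_j$. Every class $\beta$ contributing to $W^b$ has Maslov index $2$, and by \cref{DiscDivisorPairing} can be expanded as $\beta = \sum_j (D_j \cdot \beta)[u_j]$, so $\pd \beta = \sum_j (D_j \cdot \beta) \nu_j$. Substituting this yields
\[
W^b = \sum_\beta \ip{b}{\pd \beta} W_\beta = \sum_j \ip{b}{\nu_j} W_j = \ip{b}{\textstyle\sum_j \nu_j W_j},
\]
so $\diff b = \iota_V(b) \cdot 1_L$, where $V := \sum_j \nu_j W_j \in H_1(L; \R) \otimes_\R \LHD$ and $\iota_V$ denotes interior multiplication. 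Because $L$ is a torus we have $H^*_\mathrm{dR}(L; \R) \cong \Lambda^* H^1(L; \R)$, and in the canonical de Rham model with zero bounding cochain the Floer differential on harmonic representatives is a derivation with respect to the wedge product, extending the formula for $\diff b$ to all of $CF^*_\mathrm{can}$ as the Koszul differential $\iota_V$.

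The $0$th cohomology of the Koszul complex $(\Lambda^* H^1(L; \R) \otimes \LHD, \iota_V)$ is by definition the quotient of $\LHD$ by the ideal generated by the coefficients of $V$ in any basis, namely $(\sum_j \nu_j W_j)$. Since $1_L$ represents the generator of $\Lambda^0 H^1(L; \R)$, the unit map $\LHD \to HF^0(L, L; \LHD)$ is surjective onto its image with kernel exactly this ideal, yielding the desired identification.

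The main obstacle is justifying that $\diff$ really acts as the derivation $\iota_V$ on all of $\Lambda^* H^1 \otimes \LHD$, rather than merely on degree-$1$ classes. One route is to exploit the torus-equivariance inherent in the canonical de Rham model: harmonic forms on $L$ are translation-invariant, so moduli of discs through several generic points can be organised using the $L$-action, and the higher $A_\infty$ operations on harmonic inputs reduce to iterated applications of the single formula $\diff b = W^b \cdot 1_L$. Alternatively one can adapt the corresponding argument in \cite{FOOOMSToric}, noting that compactness of $X$ plays no role once the disc counts and $\LHD$ have been set up.
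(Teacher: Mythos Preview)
Your proposal is correct and follows essentially the same route as the paper: both compute $W^b = \sum_j \ip{b}{\nu_j} W_j$ from the decomposition $\pd\beta = \sum_j (D_j\cdot\beta)\,\nu_j$ and then identify the annihilator of $1_L$ with the ideal these elements generate. Your Koszul differential $\iota_V$ is exactly the paper's ``Leibniz rule'' made explicit (a derivation of $\wedge$ extending $b\mapsto W^b$ is precisely $\iota_V$), so the obstacle you honestly flag is the same fact the paper invokes in a single phrase.
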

\begin{proof}
Since the cohomology algebra of the torus is generated in degree $1$, by the Leibniz rule it is enough to show that the ideal generated by the $W^b$ is $(\sum_j \nu_j W_j)$.  Using the duality between the $[u_j]$ and the $H_j$, for any disc class $\beta$ we have
\[
\pd \beta = \pd \sum_{j=1}^N \ip{H_j}{\beta} [u_j] = \sum_{j=1}^N \ip{H_j}{\beta} \nu_j,
\]
and hence for all $b \in H^1(L; \R)$ we have
\[
W^b = \sum_\beta \ip{b}{\pd \beta} W_\beta = \sum_\beta \sum_{j=1}^N \ip{b}{\nu_j} \ip{H_j}{\beta} W_\beta = \sum_{j=1}^N \ip{b}{\nu_j} W_j,
\]
from which the result immediately follows.
\end{proof}

\begin{rmk}
\label{logDerivative}
Let $\gamma_1, \dots, \gamma_n$ be a basis of $H_1(L; \Z)$, $y_j = \tau^{\gamma_j}$ be the corresponding monomials in $\Lambda [H_1(L; \Z)]$, and $b_1, \dots, b_n$ be the dual basis of $H^1(L; \Z)$.  For any class $\beta$ with boundary $\pd \beta = \sum p_j \gamma_j$ we then have
\[
W_\beta = (\# \mathcal{M}_\beta) T^{\ip{\omega}{\beta}} y_1^{p_1}\dots y_n^{p_n}.
\]
Weighting by the pairing $\ip{b_j}{\pd \beta} = p_j$ therefore corresponds to applying the operator $y_j \pd / \pd y_j$, and we deduce that
\[
\diff b_j = y_j \frac{\pd W}{\pd y_j} \cdot 1_L
\]
for each $j$.  We can therefore rewrite the ideal appearing in \cref{lemHF} as $(y_j \pd W/\pd y_j)$.  The $y_j$ are coordinates on the space of local systems, and the operators $y_j \pd/\pd y_j = \pd / \pd \log y_j$ represent the derivatives after pulling back under the exponential map $H^1(L; \C) \rightarrow H^1(L; \C^*)$.
\end{rmk}

\begin{rmk}
The description of the superpotential in \cite{FOOOtorI} (where it is called $\mathfrak{PO}$) is superficially different from our description of $W$.  Rather than explicitly weighting disc counts by boundary homology class, Fukaya--Oh--Ohta--Ono introduce formal variables $x_1, \dots, x_n$ (the logarithms of the $y_j$ appearing in \cref{logDerivative}) and consider the sum
\[
\sum_{j=0}^\infty \m^\mathrm{can}_j(b, \dots, b),
\]
where $b = x_1b_1+\dots+x_nb_n$ and $\m^\mathrm{can}_*$ are the $A_\infty$-operations on $CF^*_\mathrm{can}(L, L; \Lambda_0)$.  This is effectively the restriction of the potential function of \cite{FOOObig}, which is defined on the space of weak bounding cochains on $L$, to the subspace comprising those cochains which correspond to local systems (coordinatised by their logarithms).  An open-string version of the divisor axiom \cite[Lemma 11.8]{FOOOtorI} shows that this sum is actually a function of the $y_j = e^{x_j}$, and that after making this substitution it coincides with definition we gave for $W$.
\end{rmk}

Just as we defined the Floer cohomology over $\LHD$, we can also define $\CO^0$ with $\LHD$ coefficients to obtain a $\Lambda_0$-algebra homomorphism
\begin{equation}
\label{CO0expression}
\CO^0 : QH^*(X; \Lambda_0) \rightarrow HF^*(L, L; \LHD).
\end{equation}
The left-hand side is spanned by the Poincar\'e duals of cycles on $X$ which are setwise-invariant under the torus-action, and one therefore expects the image to be spanned by torus-invariant classes on $L$.  The only invariant cycle on $L$ is the fundamental cycle, so $\CO^0$ should map to the subalgebra of the right-hand side generated by $1_L$, and we obtain, heuristically at least, a $\Lambda$-algebra homomorphism
\begin{equation}
\label{CO0ks}
QH^*(X; \Lambda_0) \rightarrow \LHD \Big/ \Big(\sum_{j=1}^N \nu_j W_j \Big).
\end{equation}
The Kodaira--Spencer map $\ks$ is defined by formalising this.

Of course, in order to do this we do not need to define the Fukaya category or the full $\CO$ map: \eqref{CO0ks} can be defined directly by counting discs with an interior marked point constrained to the Poincar\'e dual of the input class, and a boundary marked point constrained to $p \in L$ (linear dual to $1_L$), weighted by area and boundary homology class.  Taking $\alpha$ to be $H_j$, represented by the Poincar\'e dual cycle $D_j$, we see directly from the definition of $\CO^0$ that $\CO^0(H_j)$ is given by the same expression as $W_j$.  The fact that the image of \eqref{CO0expression} is contained in the subalgebra generated by the unit corresponds to the fact that Fukaya--Oh--Ohta--Ono construct the relevant Kuranishi structures \emph{torus-equivariantly}, so that the moduli spaces have to sweep a multiple of the fundamental class when evaluating to $L$.  This concludes our discussion of steps \eqref{itmSuperpot} and \eqref{itmks}.

\subsection{The cone $C$}
\label{sscCone}

In \cref{sscCoeffRings} we will define the ring $\LHD$ that appears in our main results.  So far we have discussed this as a completion of a subring of $\Lambda_0 [H_1(L; \Z)]$, where $L$ is our chosen toric fibre, but it has an alternative description as a completion of the monoid ring $\C[\Gamma_\R]$ associated to a submonoid $\Gamma_\R$ of $\R \oplus H_1(L; \Z)$, noting that $\Lambda_0$ is itself a completion of the monoid ring $\C[\R_{\geq 0}]$.  The monoid perspective, and in particular the related geometry, is more useful for our purposes and is therefore the one we shall adopt from now on.

Our first step is to define a convex cone $C$ in $\R \oplus H_1(L; \R)$ which will contain $\Gamma_\R$.  The slogan one should keep in mind throughout is that $\Gamma_\R$ is the smallest monoid which can record areas and boundaries of holomorphic stable discs, and $C$ is the smallest cone containing it.  Recall that the areas and boundaries of the basic disc classes $[u_j]$ are given respectively by the distances $\lambda_j$ from the facets of $\Delta$ to the origin and by the normals $\nu_j$ to these facets (as in \eqref{Polytope}).

\begin{defn}
The cone $C$ is defined to be the subset of $\R \oplus H_1(L; \R)$ given by $\R_{\geq 0}$-linear combinations of the vectors $(\lambda_j, \nu_j)$ and the vector $(1, 0)$.
\end{defn}

It will be helpful to have an alternative formulation, as follows.  For each vertex $v$ of $\Delta$ let $\theta_v : \R \oplus H_1(L; \R) \rightarrow \R$ be the linear functional defined by
\[
\theta_v(\lambda, \nu) = \lambda + \ip{v}{\nu}.
\]
Let $C_\theta$ be the subset of $\R \oplus H_1(L; \R)$ on which all $\theta_v$ are non-negative, and let $C_{H_1} \subset H_1(L; \R)$ be the set of $\R_{\geq 0}$-linear combinations of $\nu_1, \dots, \nu_N$.

\begin{lem}
\label{CReformulation}
$C$ is given by the intersection of $C_\theta$ with $\R \times C_{H_1}$.
\end{lem}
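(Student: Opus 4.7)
The plan is to prove the two inclusions separately. For $C \subseteq C_\theta \cap (\R \times C_{H_1})$, both sides are convex cones, so it suffices to check containment on the generators $(\lambda_j, \nu_j)$ and $(1, 0)$ of $C$. For $(\lambda_j, \nu_j)$, the condition $\theta_v \geq 0$ at a vertex $v$ of $\Delta$ reads $\lambda_j + \ip{v}{\nu_j} \geq 0$, which is exactly the defining inequality of $\Delta$ from \eqref{Polytope}; membership in $\R \times C_{H_1}$ is obvious. The generator $(1, 0)$ satisfies $\theta_v = 1 > 0$ and projects to $0 \in C_{H_1}$.

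For the reverse inclusion, given $(\lambda, \nu) \in C_\theta \cap (\R \times C_{H_1})$, I want to produce nonnegative scalars $c_1, \dots, c_N$ with $\sum_j c_j \nu_j = \nu$ and $\sum_j c_j \lambda_j \leq \lambda$, since then
\[
(\lambda, \nu) = \sum_j c_j (\lambda_j, \nu_j) + \Big(\lambda - \sum_j c_j \lambda_j\Big)(1, 0)
\]
exhibits $(\lambda, \nu) \in C$.  Existence of such $c_j$ reduces to the inequality $\lambda \geq \lambda_{\min}(\nu)$, where
\[
\lambda_{\min}(\nu) := \min\Big\{\textstyle\sum_j c_j \lambda_j : c_j \geq 0,\ \sum_j c_j \nu_j = \nu\Big\}.
\]

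The heart of the matter is the identification $\lambda_{\min}(\nu) = \max_{v \in \Delta}(-\ip{v}{\nu})$ via linear programming duality: introducing a multiplier $-v \in \mathfrak{t}^*$ for the equality constraint $\sum_j c_j \nu_j = \nu$, the dual LP has objective $-\ip{v}{\nu}$, and the dual-feasibility conditions $\ip{v}{\nu_j} \geq -\lambda_j$ are precisely the defining inequalities of $\Delta$.  Primal feasibility follows from $\nu \in C_{H_1}$, and the primal objective is bounded below by $0$ since the $\lambda_j$ and $c_j$ are nonnegative, so strong LP duality applies.  The hypothesis that $\Delta$ has a vertex then ensures that the finite dual maximum is attained at a vertex, whence $\lambda_{\min}(\nu) = \max_{v \text{ vertex}}(-\ip{v}{\nu}) \leq \lambda$ by the defining condition of $C_\theta$.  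The only substantive step, and the sole place the vertex hypothesis on $\Delta$ really enters, is this LP duality argument with attainment at a vertex; once it is in place, the rest is bookkeeping.
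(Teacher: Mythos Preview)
Your proof is correct, and it takes a genuinely different route from the paper's argument for the reverse inclusion (the forward inclusion is identical in both).  The paper argues directly: it first uses $\nu \in C_{H_1}$ to show that the linear functional $\ip{\cdot}{\nu}$ is bounded below on $\Delta$, picks a vertex $v$ where the minimum is attained, uses the Delzant condition to write $\nu$ uniquely as $t_1\nu_1 + \dots + t_n\nu_n$ in terms of the $n$ normals at $v$, and then checks that minimality at $v$ forces each $t_j \geq 0$ while $\theta_v(c) \geq 0$ forces the residual $(1,0)$-coefficient $s$ to be nonnegative.  Your LP-duality argument packages the same optimisation cleanly: the paper is in effect carrying out the simplex method by hand at the optimal vertex.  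What you gain is conceptual transparency and a proof that does not invoke the Delzant condition (only that $\Delta$ is a polyhedron with a vertex).  What the paper's approach buys is an explicit identification of the optimal vertex and the coefficients $t_j$, which it immediately reuses in the height--intersection lemma that follows; your argument establishes existence of a nonnegative representation but does not single one out.
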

\begin{proof}
For each point $x$ of $\Delta$ we have $\lambda_j + \ip{\nu_j}{x} \geq 0$ for all $j$.  In particular, this holds for each vertex $v$, which means that every $(\lambda_j, \nu_j)$ lies in $C_\theta$.  Clearly $(1, 0)$ also lies in $C_\theta$, so every non-negative combination of these points is in $C_\theta$ too.  Any such non-negative combination also obviously lies in $\R \times C_{H_1}$ so we see that
\[
C \subset C_\theta \cap (\R \times C_{H_1}).
\]

Conversely suppose that $c = (\lambda, \nu)$ is a point in $C_\theta \cap (\R \times C_{H_1})$.  We claim that the functional $\nu : H^1(L; \R) \rightarrow \R$ is bounded below on $\Delta$ (we are thinking of $H^1$ as the dual space to $H_1$ and no longer explicitly notating the pairing between these spaces).  Given this, it attains its minimum value at some vertex $v$, and by relabelling we may assume that the facets meeting at $v$ are $F_1, \dots, F_n$.  Since $\nu_1, \dots, \nu_n$ form a basis for $H_1(L; \Z)$ we can write $\nu$ as $t_1\nu_1 + \dots + t_n\nu_n$ for some (unique, possibly negative) real numbers $t_1, \dots, t_n$, and then write $c$ as
\[
(s, 0) + t_1(\lambda_1, \nu_1) + \dots + t_n(\lambda_n, \nu_n)
\]
for some (again, unique but possibly negative) real number $s$.  We are done if we can show that $s$ and the $t_j$ are in fact non-negative.

To see this is indeed the case, note that since $c$ lies in $C_\theta$ we have $\theta_v(c) \geq 0$, and since $\nu$ attains its minimum value at $v$ it is non-decreasing along the edges of $\Delta$ which emanate from there.  The former (combined with the fact that for $j=1, \dots, n$ we have
\[
\theta_v(\lambda_j, \nu_j) = \lambda_j + \ip{\nu_j}{v} = 0
\]
since $v$ lies in the facet $F_j$) tells us that $s$ is non-negative, whilst the latter says precisely that the $t_j$ are non-negative.

It is therefore left to show that $\nu$ is bounded below on $\Delta$, and this follows from the fact that $c$ is in $\R \times C_{H_1}$ (which we have not yet used): we can write $\nu$ as $t_1 \nu_1 + \dots + t_N \nu_N$ for some non-negative $t_1, \dots, t_N$, and each $\nu_j$ is bounded below by $-\lambda_j$ on $\Delta$, so $\nu$ is bounded below by $-(t_1\nu_1 + \dots + t_N\nu_N)$.
\end{proof}

The next result is not strictly necessary but is helpful for thinking about $C$ geometrically:

\begin{lem}
\label{CWhenDeltaCompact}
The following are equivalent:
\begin{enumerate}[(a)]
\item\label{Citm1} $\Delta$ is compact.
\item\label{Citm2} $C_{H_1}$ is the whole of $H_1(L; \R)$.
\item\label{Citm3} $(1, 0)$ is in the interior of $C$ (which implies that it is redundant as a generator of $C$).
\end{enumerate}
\end{lem}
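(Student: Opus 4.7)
My plan is to prove the three conditions are equivalent by showing \eqref{Citm1}$\Leftrightarrow$\eqref{Citm2} and \eqref{Citm2}$\Leftrightarrow$\eqref{Citm3}, using \cref{CReformulation} in an essential way for the second equivalence.

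For \eqref{Citm1}$\Leftrightarrow$\eqref{Citm2}, I would argue via the recession cone of $\Delta$. Since $\Delta$ is a non-empty polyhedron, it is compact if and only if its recession cone
\[
\{v \in \mathfrak{t}^* : \ip{v}{\nu_j} \geq 0 \text{ for } j = 1, \dots, N\}
\]
is trivial. The slick step is then to identify this as the polar dual of $C_{H_1}$ inside $H_1(L; \R) \cong \mathfrak{t}$: by Farkas' lemma (or a direct finite-dimensional Hahn--Banach argument), the polar of the finitely generated convex cone $C_{H_1}$ is trivial precisely when $C_{H_1}$ is the whole ambient space. This gives \eqref{Citm1}$\Leftrightarrow$\eqref{Citm2}.

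For \eqref{Citm2}$\Leftrightarrow$\eqref{Citm3}, I would use \cref{CReformulation} to write $C = C_\theta \cap (\R \times C_{H_1})$. The point $(1,0)$ lies in the interior of $C_\theta$ unconditionally, because $\theta_v(1,0) = 1 > 0$ for each of the finitely many vertices $v$ of $\Delta$, so strict positivity of the finitely many linear functionals $\theta_v$ persists in a neighbourhood. Hence $(1,0)$ is an interior point of $C$ iff it is an interior point of $\R \times C_{H_1}$, iff $0$ lies in the interior of the convex cone $C_{H_1}$. Since a convex cone in a finite-dimensional vector space contains the origin in its interior exactly when it is the whole space, this is the same as \eqref{Citm2}.

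The parenthetical remark in \eqref{Citm3} follows at the end: assuming \eqref{Citm2}, there exist non-negative coefficients $t_1, \dots, t_N$, not all zero, with $\sum_j t_j \nu_j = 0$ (using that $-\nu_1$, say, lies in $C_{H_1}$). Then $\sum_j t_j (\lambda_j, \nu_j) = \bigl(\sum_j t_j \lambda_j, 0\bigr)$ is a strictly positive multiple of $(1,0)$ because the $\lambda_j$ are all strictly positive, so $(1,0)$ is already an $\R_{\geq 0}$-combination of the $(\lambda_j,\nu_j)$ and is redundant as a generator of $C$. The main conceptual obstacle is really just the polar-duality step in \eqref{Citm1}$\Leftrightarrow$\eqref{Citm2}; everything else is direct unpacking of definitions once \cref{CReformulation} is in hand.
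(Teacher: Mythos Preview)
Your proof is correct. The \eqref{Citm2}$\Leftrightarrow$\eqref{Citm3} argument is essentially identical to the paper's: both observe that $(1,0)$ lies in the interior of $C_\theta$ since each $\theta_v$ takes the value $1$ there, and then reduce via \cref{CReformulation} to whether $0$ lies in the interior of the cone $C_{H_1}$.

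Where you differ is in \eqref{Citm1}$\Leftrightarrow$\eqref{Citm2}. The paper argues each implication separately and concretely: for \eqref{Citm1}$\Rightarrow$\eqref{Citm2} it maximises each $\nu_j$ over the compact polytope $\Delta$, expresses $\nu_j$ in the basis of normals at the maximising vertex, and reads off that $-\nu_j \in C_{H_1}$; for $\neg$\eqref{Citm1}$\Rightarrow\neg$\eqref{Citm2} it takes a ray $\R_{\geq 0}x \subset \Delta$ and observes that $C_{H_1}$ sits in the half-space $\{\ip{x}{\cdot} \geq 0\}$. You instead package both directions into the single observation that the recession cone of $\Delta$ is the polar dual of $C_{H_1}$, and invoke the bipolar theorem (or Farkas). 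Your route is cleaner and more conceptual, at the cost of citing an external convexity result; the paper's is more self-contained and in fact yields the explicit expression of $-\nu_j$ as a non-negative combination of the other normals. You also supply a proof of the parenthetical redundancy claim in \eqref{Citm3}, which the paper leaves implicit.
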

\begin{proof}
If $\Delta$ is compact then the functional $\nu_1 : H^1(L; \R) \rightarrow \R$ is bounded above on $\Delta$ and attains its maximum value at some vertex $v$.  By relabelling we may assume that the normals to the facets meeting at $v$ are $F_2, \dots, F_{n+1}$, and since $\nu_2, \dots, \nu_{n+1}$ form a basis for $H_1(L; \Z)$ we can write $\nu_1$ as $t_2 \nu_2 + \dots + t_{n+1}\nu_{n+1}$ for some integers $t_2, \dots, t_{n+1}$.  Because $\nu_1$ attains its maximum value at $v$, the $t_j$ are non-positive, and hence
\[
-\nu_1 = - t_2 \nu_2 - \dots - t_{n+1} \nu_{n+1}
\]
is an expression for $-\nu_1$ as an element of $C_{H_1}$.  Repeating this argument with each $\nu_j$ in place of $\nu_1$ in turn, we see that $C_{H_1}$ contains $\pm \nu_j$ for all $j$ and thus is the whole of $H_1(L; \R)$, proving that \eqref{Citm1} implies \eqref{Citm2}.

Conversely, if $\Delta$ is non-compact then it contains some ray $\R_{\geq 0}x$.  This forces $\ip{x}{\nu_j}$ to be non-negative for all $j$, and hence $C_{H_1}$ is contained in the half-space $\ip{x}{\cdot} \geq 0$.  Thus \eqref{Citm2} implies \eqref{Citm1}.

Finally, each $\theta_v$ evaluates to $1$ on $(1, 0)$, so it lies in the interior of $C_\theta$.  This means that \eqref{Citm3} is equivalent to $0$ lying in the interior of $C_{H_1}$, and since the latter is a cone this is equivalent to \eqref{Citm2}.
\end{proof}

Using the reformulation \cref{CReformulation}, we can make:

\begin{defn}
We define the \emph{bottom boundary} of $C$, denoted $\pd C$, to be
\[
\pd C = \{c \in C : \theta_v(c) = 0 \text{ for some vertex } v\}.
\]
Equivalently, it is the intersection of $C$ with the boundary of $C_\theta$.  If $\Delta$ is compact then by \cref{CWhenDeltaCompact} this is the usual boundary of $C$, but in general we take the usual boundary and delete the interiors of the faces which contain the `vertical' direction $\R_{\geq 0} \oplus 0$.  The \emph{height} of an element $c = (\lambda, \nu)$ of $C$ is defined to be
\[
h(c) = \min_v \theta_v(c).
\]
This is the unique non-negative real number such that $c$ is contained in $\pd C + (h(c), 0)$.
\end{defn}

\Cref{figCone} illustrates $C$, $\pd C$ and the subset of $C$ comprising elements of height at least $h$ for $\C\P^1$ and $\C$, with moment polyhedra $\{-\lambda_1 \leq x \leq \lambda_2\}$ and $\{-\lambda_1 \leq x\}$ respectively.  We have chosen $0 < \lambda_1 < \lambda_2$, which means that the upward-sloping part of $\pd C$ for $\C\P^1$ is steeper than the downward-sloping part.

\begin{figure}[ht]
\centering
\begin{tikzpicture}
\def\l{0.8}
\def\ltot{1.3}
\def\xmax{3}
\def\ymin{-0.1}
\def\ymax{4.5}

\begin{scope}
\begin{scope}
\clip (-\xmax, \ymin) rectangle (\xmax, \ymax);
\draw[draw=orange, line width=0.4mm, dashed, fill=orange, fill opacity=0.2] (0, 0) -- ($8*(1, \l)$) -- ++(0, 5) -- ++(-16, 0) -- ($8*(-1, \ltot-\l)$) -- cycle;
\begin{scope}[shift={(0, 2)}]
\draw (0, 0) node[anchor=east]{\small $h$};
\fill[orange, opacity=0.2] (0, 0) -- ($8*(1, \l)$) -- ++(0, 5) -- ++(-16, 0) -- ($8*(-1, \ltot-\l)$) -- cycle;
\end{scope}
\end{scope}
\draw[->] (-\xmax-0.25, 0) -- (\xmax+0.25, 0);
\draw[->] (0, \ymin-0.4) -- (0, \ymax+0.4);
\draw (0, \ymax+0.3) node[anchor=west]{\small $\R$};
\draw (\xmax+0.25, 0) node[anchor=south]{\small $H_1(L; \R)$};
\end{scope}

\def\xmin{-0.1}
\def\xmax{5}

\begin{scope}[shift={(6.2, 0)}]
\begin{scope}
\clip (-\xmax, \ymin) rectangle (\xmax, \ymax);
\fill[orange, opacity=0.2] (0, 0) -- ($8*(1, \l)$) -- ++(0, 5) -- ++(-8, 0) -- cycle;
\draw[draw=orange, line width=0.4mm, dashed] (0, 0) -- ($8*(1, \l)$);
\begin{scope}[shift={(0, 2)}]
\draw (0, 0) node[anchor=east]{\small $h$};
\fill[orange, opacity=0.2] (0, 0) -- ($8*(1, \l)$) -- ++(0, 5) -- ++(-8, 0) -- cycle;
\end{scope}
\end{scope}
\draw[->] (\xmin-0.4, 0) -- (\xmax+0.25, 0);
\draw[->] (0, \ymin-0.4) -- (0, \ymax+0.4);
\draw (0, \ymax+0.3) node[anchor=west]{\small $\R$};
\draw (\xmax+0.25, 0) node[anchor=south]{\small $H_1(L; \R)$};
\end{scope}

\end{tikzpicture}
\caption{The cone $C$ (shaded), bottom boundary $\pd C$ (dashed), and subset of $C$ of height at least $h$ (dark shaded) for $\C\P^1$ (left) and $\C$ (right).\label{figCone}}
\end{figure}
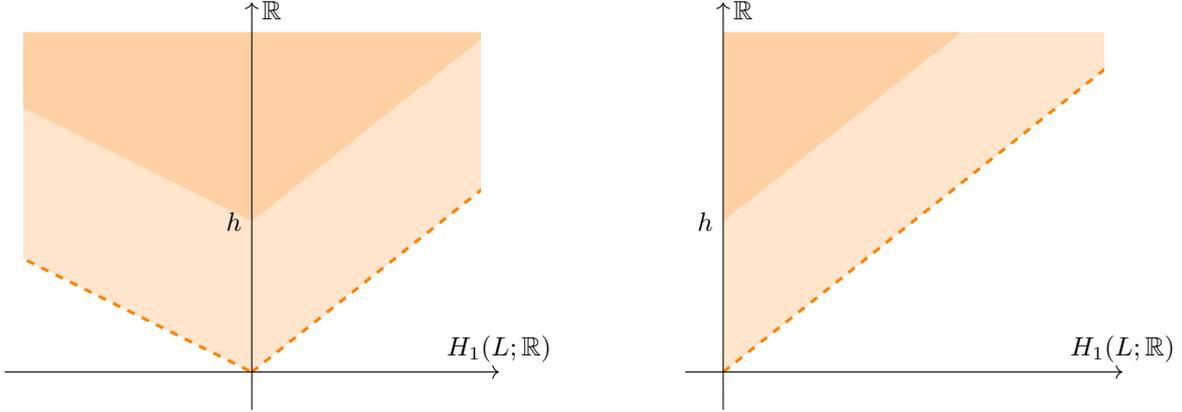

We have the following important connection between height and faces of $\Delta$:

\begin{lem}[Height--intersection lemma]
\label{HILemma}
For non-negative real numbers $s, t_1, \dots, t_N$ the sum
\begin{equation}
\label{cSum}
s(1, 0) + t_1(\lambda_1, \nu_1) + \dots + t_N(\lambda_N, \nu_N)
\end{equation}
has height at least $s$, with equality if and only if the intersection
\[
\bigcap_{j : t_j > 0} F_j
\]
is non-empty---we shall call sums with the latter property \emph{intersecting}.  Every element of $C$ has a unique representation as an intersecting sum.
\end{lem}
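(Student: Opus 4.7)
The plan is to reduce everything to evaluating the linear functionals $\theta_v$ on a given combination. For the height inequality, I would apply $\theta_v$ to the sum \eqref{cSum} and use the observation that $\theta_v(\lambda_j,\nu_j) = \lambda_j + \ip{v}{\nu_j} \geq 0$ for every vertex $v$, with equality precisely when $v \in F_j$. This gives $\theta_v(c) = s + \sum_j t_j\theta_v(\lambda_j,\nu_j) \geq s$, so $h(c) \geq s$, and moreover $\theta_v(c) = s$ holds for some vertex $v$ if and only if $v \in F_j$ for every $j$ with $t_j > 0$, i.e., $v \in \bigcap_{j:t_j>0} F_j$. Since the intersection of facets is itself a face of $\Delta$ and every non-empty face of $\Delta$ (which has a vertex by assumption, using Delzantness) contains a vertex, this translates into the stated condition that the intersection be non-empty.

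For existence of an intersecting representation, I would set $c' = c - (h(c),0)$, which lies in $C$ and on $\pd C$, so some $\theta_v(c') = 0$. By Delzant, the $n$ facets $F_{j_1},\dots,F_{j_n}$ meeting at $v$ have normals forming a basis of $H_1(L;\Z)$. Writing $c' = s'(1,0) + \sum_j t_j(\lambda_j,\nu_j)$ using the defining generators of $C$ and applying $\theta_v$ forces $s' = 0$ and $t_j = 0$ whenever $v \notin F_j$; the remaining $t_{j_k}$ are then uniquely determined by the basis property applied to the $\nu$-component, and the $\R$-component automatically matches because $\ip{v}{\nu_{j_k}} = -\lambda_{j_k}$. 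This produces an intersecting representation of $c$ with the vertex $v$ witnessing the intersection.

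For uniqueness, suppose $c$ admits two intersecting representations with coefficient sets $J,J'$ and coefficients $t_j, t_j'$. The coefficient of $(1,0)$ in either representation must equal $h(c)$ by the first part, so it suffices to compare the $\sum_{j\in J} t_j(\lambda_j,\nu_j) = \sum_{j\in J'}t_j'(\lambda_j,\nu_j)$ part. Applying $\theta_v$ for a vertex $v \in \bigcap_{j\in J}F_j$ kills the left side and forces $v\in F_j$ for every $j\in J'$ with $t_j' > 0$; symmetrically, this shows that $J \cup J'$ is contained in the set of facets through some common vertex $v$, whose normals form a basis. Matching the $\nu$-components in this basis then gives $t_j = t_j'$ for all $j$.

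The only real subtlety I foresee is the passage from ``the intersection of facets is non-empty'' to ``it contains a vertex,'' which needs the hypothesis that $\Delta$ has a vertex together with Delzantness to rule out the non-compact case where a face might recede to infinity along a line; this is handled by noting that any face of $\Delta$ inherits the no-affine-line condition from $\Delta$. Everything else is a direct computation with $\theta_v$.
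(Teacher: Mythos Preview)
Your proposal is correct and follows essentially the same route as the paper: evaluate the functionals $\theta_v$ on the sum to get the height inequality and the intersecting criterion, then for existence subtract $(h(c),0)$ and use a vertex $v$ realising the minimum (the paper invokes \cref{CReformulation} to justify $c' \in C$, which you leave implicit), and for uniqueness use linear independence of the normals at a common vertex. Your explicit remark that ``non-empty intersection of facets'' must be upgraded to ``contains a vertex'' is a point the paper passes over silently, so your version is if anything slightly more careful there.
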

\begin{proof}
Clearly \eqref{cSum} has height at least $s$, with equality if and only if there exists $v$ with
\[
\theta_v (t_1 (\lambda_1, \nu_1) + \dots + t_N (\lambda_N, \nu_N) ) = 0.
\]
Expanding out the left-hand side as $\sum_j t_j \theta_v(\lambda_j, \nu_j)$ and using the fact that each $\theta_v(\lambda_j, \nu_j)$ is non-negative with equality if and only if $v$ lies in $F_j$, we see that equality occurs if and only if there exists a $v$ which is contained in all $F_j$ for which $t_j$ is non-zero.  In other words, if and only if it is intersecting.

Now let $c$ be an arbitrary element of $C$.  There exists some vertex $v$ of $\Delta$ such that $\theta_v(c) = h(c)$, and without loss of generality we may assume that the facets meeting at $v$ are $F_1, \dots, F_n$.  The element $c' = c - h(c)(1, 0)$ lies in $C$ by \cref{CReformulation}, so it can be written as an $\R_{\geq 0}$-linear combination of the form \eqref{cSum}, and since it satisfies $\theta_v(c') = 0$ we must have $s = t_{n+1} = \dots = t_N = 0$.  Thus
\[
c = h(c)(1, 0) + t_1(\lambda_1, \nu_1) + \dots + t_n(\lambda_n, \nu_n)
\]
(note the upper limit on this sum) is an expression for $c$ as an intersecting sum.

Suppose that
\[
s'(1, 0) + t'_1(\lambda_1, \nu_1) + \dots + t'_N(\lambda_N, \nu_N)
\]
is another intersecting sum expression for $c$ (again, note the upper limit).  Using once more the fact that $\theta_v(c) = h(c)$, we must have $s = h(c)$ and $t'_j = 0$ for $j=n+1, \dots, N$.  We deduce that
\[
(t_1-t_1')(\lambda_1, \nu_1) + \dots + (t_n-t_n') (\lambda_n, \nu_n) = 0,
\]
and because the normals $\nu_1, \dots, \nu_n$ to the facets meeting at $v$ are linearly independent, we conclude that $t_j=t'_j$ for all $j$, giving the claimed uniqueness.
\end{proof}

\subsection{The ring $\LHD$}
\label{sscCoeffRings}

Using the cone $C$ we can now define the main algebraic objects:

\begin{defn}
$\Gamma_\R$ is the intersection of $C$ with $\R \oplus H_1(L; \Z)$---note the $\Z$ coefficients on the homology group.  It carries a natural decreasing $\R_{\geq 0}$-filtration $F^* \Gamma_\R$ in which $F^\lambda \Gamma_\R$ is given by $\Gamma_\R \cap (C + (\lambda, 0))$, i.e.~the set of elements of height at least $\lambda$.  This height filtration is inherited by the monoid ring $\C[\Gamma_\R]$, and $\LHD$ is defined to be the completion.  Note that $\LHD$ is naturally a $\Lambda_0$-algebra, and the topology it inherits from the filtration on $\C[\Gamma_\R]$ is equivalent to the $T$-adic topology.
\end{defn}

We will use the \emph{height} of a monomial in $\C[\Gamma_\R]$ to mean simply the height of the corresponding element of $\Gamma_\R$.  An element of $\LHD$ is then a possibly-infinite sum of monomials whose heights tend to infinity, and we define the height of this element to be the minimum of the heights of the monomial summands.

When discussing monoid rings associated to submonoids of $\R \oplus H_1(L; \Z)$, as well as the corresponding completions, we shall encode the $\R$-component as the exponent of a formal variable $T$, just as in $\Lambda_0$.  We shall similarly encode the $H_1(L; \Z)$-component as the exponent of a formal variable $\tau$, as in \cref{sscksSketch}.

\begin{defn}
The monomials $\lv_1, \dots, \lv_N$ (appearing in \cref{sscOverview} and the main results) are defined by
\[
\lv_j = T^{\lambda_j} \tau^{\nu_j}.
\]
These are precisely the monomials corresponding to the areas and boundaries of the basic discs.
\end{defn}

The $\lv_j$, together with the powers of $T$, actually generate $\C[\Gamma_\R]$ as an algebra:

\begin{lem}
\label{GammaRSpan}
For any $c$ in $\Gamma_\R$, in the unique expression for $c$ as an intersecting sum the coefficients of the $(\lambda_j, \nu_j)$ are all integral.  Thus $\Gamma_\R$ is the submonoid of $\R \oplus H_1(L; \Z)$ generated by
\[
(\lambda_1, \nu_1), \dots, (\lambda_N, \nu_N) \text{ and } \R_{\geq 0} \oplus 0.
\]
\end{lem}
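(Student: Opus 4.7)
The plan is to combine the uniqueness of intersecting sums from \cref{HILemma} with the Delzant condition on the normals $\nu_j$. Concretely, start with an arbitrary $c \in \Gamma_\R$ and write it as its unique intersecting sum
\[
c = s(1, 0) + t_1(\lambda_1, \nu_1) + \dots + t_N(\lambda_N, \nu_N),
\]
with $s, t_j \geq 0$ real and the facets $F_j$ for which $t_j > 0$ meeting at some common vertex $v$. The goal is then just to show that each $t_j \in \Z_{\geq 0}$.

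After relabelling, we may assume the facets meeting at $v$ are $F_1, \dots, F_n$, so that $t_{n+1} = \dots = t_N = 0$ by the intersecting condition. The $H_1(L; \R)$-component of $c$ is then
\[
\nu = t_1 \nu_1 + \dots + t_n \nu_n,
\]
and because $c \in \Gamma_\R = C \cap (\R \oplus H_1(L; \Z))$, this combination lies in $H_1(L; \Z) = A$. The key step, which is really the only non-formal ingredient, is to invoke the Delzant hypothesis from \cref{sscToricBG}: the normals to the $n$ facets meeting at any vertex extend to a $\Z$-basis of $A$, and as they are already $n$ in number, $\nu_1, \dots, \nu_n$ themselves form such a basis. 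Integrality of $\nu$ in this basis therefore forces each $t_j$ to be an integer, as required.

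For the final sentence, the inclusion of the submonoid generated by $(\lambda_1, \nu_1), \dots, (\lambda_N, \nu_N)$ and $\R_{\geq 0} \oplus 0$ into $\Gamma_\R$ is immediate from the definitions of $C$ and $\Gamma_\R$, while the reverse inclusion is exactly the integral intersecting sum representation just established. There is no serious obstacle; the only thing worth flagging is that the argument genuinely uses the Delzant property (not just that the $\nu_j$ generate $A$ as an abelian group), so it is natural to phrase the decisive step in terms of the particular vertex $v$ identified by \cref{HILemma}.
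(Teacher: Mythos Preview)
Your proof is correct and follows essentially the same approach as the paper's: both write $c$ via its unique intersecting sum from \cref{HILemma}, relabel so that the nonzero coefficients correspond to facets through a vertex $v$, and then invoke the Delzant condition to see that $\nu_1, \dots, \nu_n$ form a $\Z$-basis of $H_1(L;\Z)$, forcing the $t_j$ to be integers. The only minor point you leave implicit is that the intersecting condition $\bigcap_{t_j>0} F_j \neq \emptyset$ guarantees a common \emph{vertex} (not just a common point), but this is immediate since any non-empty face of $\Delta$ contains a vertex.
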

\begin{proof}
To prove the second statement, note that $\Gamma_\R$ obviously contains the claimed monoid, and the reverse inclusion follows from the first statement.  We are therefore left to prove the first statement, so fix an arbitrary $c$ in $\Gamma_\R$ and let its unique expression as an intersecting sum be
\[
c=h(c)(1, 0) + t_1(\lambda_1, \nu_1) + \dots + t_n(\lambda_n, \nu_n),
\]
where $F_1, \dots, F_n$ intersect at a vertex $v$.  Since $t_1 \nu_1 + \dots + t_n \nu_n$ lies in $H_1(L; \Z)$, and $\nu_1, \dots, \nu_n$ form a $\Z$-basis for this group by the Delzant condition, the $t_j$ must all be integral.
\end{proof}

We now need to check that our disc counts all lie in $\LHD$ as just defined, i.e.~that each such count is a sum over disc classes $\beta_1, \beta_2, \dots$ in $H_2(X, L; \Z)$ which satisfy:
\begin{enumerate}[(a)]
\item\label{itmGamma} $(\ip{\omega}{\beta_j}, \pd \beta_j)$ lies in $\Gamma_\R$ for all $j$.
\item\label{itmHeights} The heights $h(\ip{\omega}{\beta_j}, \pd \beta_j)$ tend to infinity as $j \rightarrow \infty$.
\end{enumerate}

The first of these properties follows from:

\begin{lem}[{Corresponding to \cite[Theorem 11.1(5)]{FOOOtorI}}]
\label{DiscsInGamma}
The relative homology class $\beta$ of any holomorphic stable disc $u$ in $X$, bounded by $L$, satisfies $(\ip{\omega}{\beta}, \pd \beta) \in \Gamma_\R$.
\end{lem}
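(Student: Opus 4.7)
The natural strategy is to expand $\beta$ in the basis of basic disc classes provided by \cref{DiscDivisorPairing}, show that the coefficients are non-negative by positivity of intersections with the toric divisors, and then recognise the resulting expression as an element of $\Gamma_\R$ via \cref{GammaRSpan}.

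Concretely, the plan is as follows. By \cref{DiscDivisorPairing} the classes $[u_1],\ldots,[u_N]$ form a free $\Z$-basis for $H_2(X,L;\Z)$, dual to $H_1,\ldots,H_N$ under the pairing with $H^2(X,L;\Z)$. I would therefore write
\[
\beta = \sum_{j=1}^N c_j \, [u_j], \qquad c_j = \ip{H_j}{\beta} = D_j \cdot \beta,
\]
where the last equality uses that $D_j$ is a cycle representative of the Poincar\'e dual of $H_j$ disjoint from $L$. The key input is then positivity of intersections: since each $D_j$ is a closed complex submanifold of $X$ disjoint from $L$, and $u$ is holomorphic with boundary on $L$, the intersection number $D_j \cdot u$ is a non-negative integer. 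Summing this over the components of the stable map (disc components and sphere bubbles alike, each being a non-constant holomorphic map from a closed or bordered Riemann surface into $X$ with image not contained in $D_j$ for generic reasons, or with zero contribution otherwise) shows that $c_j\geq 0$ for every $j$.

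Granted that the $c_j$ are non-negative integers, the area and boundary of $\beta$ are additive:
\[
\pd\beta = \sum_{j=1}^N c_j \nu_j, \qquad \ip{\omega}{\beta} = \sum_{j=1}^N c_j \ip{\omega}{[u_j]} = \sum_{j=1}^N c_j \lambda_j,
\]
the second using Cho--Oh's area computation $\ip{\omega'}{[u_j]} = 2\pi\lambda_j$ together with the normalisation $\omega=\omega'/2\pi$ of \cref{SympForm}. Hence
\[
(\ip{\omega}{\beta},\pd\beta) = \sum_{j=1}^N c_j (\lambda_j, \nu_j),
\]
which is a non-negative integer combination of the generators $(\lambda_j,\nu_j)$ and therefore lies in $\Gamma_\R$ by \cref{GammaRSpan}.

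The only delicate point is the positivity statement for a \emph{stable} disc (as opposed to a smooth one); this is where one invokes standard positivity-of-intersection results for holomorphic maps from nodal curves to a K\"ahler manifold, applied component by component to both the disc and sphere components of $u$, using that $D_j\cap L=\emptyset$ so no intersection theory at the boundary is needed. Once this is in hand, the rest is the purely combinatorial identification above.
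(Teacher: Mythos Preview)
Your overall strategy is right, but there is a genuine gap in how you handle sphere bubbles. You write that each component either has image not contained in $D_j$ ``for generic reasons'' or gives ``zero contribution otherwise''. The second alternative is simply false: a holomorphic sphere entirely contained in a toric divisor $D_j$ contributes $D_j \cdot [\text{sphere}]$ to $c_j$, computed via the normal bundle, and this can perfectly well be negative (think of a $(-1)$-curve inside a surface). There is also nothing generic going on here---the complex structure is the fixed standard one, and such spheres genuinely occur. So your claim that $c_j = D_j \cdot \beta \geq 0$ for the whole stable disc is not justified, and with it the expression $\sum_j c_j(\lambda_j,\nu_j)$ need not lie in $\Gamma_\R$.

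The paper's proof avoids this by treating disc and sphere components separately. Disc components have boundary on $L$, which is disjoint from every $D_j$, so no disc component can be contained in a divisor and positivity of intersections applies cleanly to give each disc component as a non-negative integer combination of the $[u_j]$. Sphere components are handled by the much softer observation that they have non-negative area (being holomorphic) and zero boundary, so each contributes a non-negative multiple of $(1,0)$---which is one of the generators of $\Gamma_\R$. Summing gives the result. Once you split the argument this way, your combinatorial identification via \cref{GammaRSpan} goes through exactly as you wrote it, applied to the disc part.
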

\begin{proof}
Any such stable disc can be decomposed into smooth holomorphic disc and sphere components, and since the latter have non-negative area and zero boundary it suffices to show that the class of each disc component is a sum of basic disc classes.  By \cref{DiscDivisorPairing}, it is thus enough to show that each disc component pairs non-negatively with each toric divisor, and this follows from positivity of intersections (this may fail for sphere components since they may be contained within one of the divisors).
\end{proof}

To prove \eqref{itmHeights} we need to check that for any fixed $\lambda > 0$ each disc count only involves finitely many terms of height at most $\lambda$.  In order to show this, the important observation is that since we always count \emph{rigid} discs, we only need consider sums over discs of fixed Maslov index since this determines the dimensions of the relevant moduli spaces (this fails in the presence of bulk deformations; see \cref{sscBulk}).  With this in mind, convergence follows from:

\begin{lem}[{Corresponding to \cite[Lemma 2.4.14]{FOOOMSToric}}]
\label{Convergence}
For any integer $k$, and any positive real number $\lambda$, the set
\begin{multline*}
\mathcal{B} = \{ \beta \in H_2(X, L; \Z) : \mu(\beta) = 2k \text{ and $\beta$ is represented by a holomorphic stable disc}
\\ \text{whose sphere components have total area at most $\lambda$}\}
\end{multline*}
is finite.  In particular, the subset comprising classes of holomorphic stable discs which correspond to monomials of height at most $\lambda$ is finite.
\end{lem}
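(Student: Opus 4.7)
My plan is to decompose a representing stable disc into its smooth disc components and sphere components, bound each contribution separately, and combine using \cref{DiscDivisorPairing}.  Write $\beta = \beta_{\mathrm{disc}} + \beta_{\mathrm{sph}}$, where $\beta_{\mathrm{disc}} \in H_2(X, L; \Z)$ is the sum of the classes of the disc components and $\beta_{\mathrm{sph}} \in H_2(X; \Z)$ is that of the sphere components.

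For $\beta_{\mathrm{sph}}$, the Mori cone of a smooth toric variety is rational polyhedral, generated by the finitely many torus-invariant $\P^1$-classes indexed by the edges of $\Delta$, each of strictly positive $\omega$-area, so the bound $\ip{\omega}{\beta_{\mathrm{sph}}} \leq \lambda$ leaves only finitely many possibilities.  For $\beta_{\mathrm{disc}}$, positivity of intersection---combined with $L \cap D_j = \emptyset$, which rules out the containment of any non-constant disc component in $D_j$---gives non-negative intersection numbers with each $H_j$, so by \cref{DiscDivisorPairing} we have $\beta_{\mathrm{disc}} = \sum_j A_j [u_j]$ with $A_j \in \Z_{\geq 0}$.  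The Maslov condition $\mu(\beta) = 2\sum_j A_j + 2 c_1(\beta_{\mathrm{sph}}) = 2k$ then forces $\sum_j A_j = k - c_1(\beta_{\mathrm{sph}})$, which for each fixed $\beta_{\mathrm{sph}}$ is a fixed non-negative integer admitting only finitely many non-negative integer solutions.  Combined with the previous step, this yields the first statement.

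For the \emph{in particular} clause I would show that the height of $(\ip{\omega}{\beta}, \pd \beta)$ is at least the sphere area $\ip{\omega}{\beta_{\mathrm{sph}}}$, reducing the problem to the first part.  Since $\pd \beta_{\mathrm{sph}} = 0$, the sphere contribution to the monomial is $\ip{\omega}{\beta_{\mathrm{sph}}} \cdot (1, 0)$; if $\sum_j A_j (\lambda_j, \nu_j) = s_{\mathrm{disc}}(1, 0) + \sum_j t_j (\lambda_j, \nu_j)$ is the unique intersecting-sum expression of the disc part from \cref{HILemma}, then adding the sphere contribution preserves the intersecting support (same $t_j$'s), so by uniqueness of intersecting sums the full monomial has height $s_{\mathrm{disc}} + \ip{\omega}{\beta_{\mathrm{sph}}} \geq \ip{\omega}{\beta_{\mathrm{sph}}}$.

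The main obstacle is establishing the finite generation of the Mori cone when $X$ is possibly non-compact: this follows from the combinatorics of the fan of $\Delta$, which has only finitely many cones, but an alternative more in keeping with the symplectic flavour of the paper is to invoke Gromov compactness after noting that stability forces all components of the stable disc to lie in a bounded neighbourhood of the compact Lagrangian $L$.
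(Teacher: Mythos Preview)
Your decomposition and overall strategy match the paper's: split a representing stable disc into disc and sphere parts, bound the sphere part via its area, then bound the disc part via the Maslov constraint.  The paper handles the sphere part by Gromov compactness rather than the Mori cone, extracts a uniform lower bound on $\mu(\beta_S)$, deduces an upper bound on $\ip{\omega}{\beta_D}$, and then applies Gromov compactness a second time to the full stable discs.  Your route is actually tidier at the end: once there are finitely many $\beta_{\mathrm{sph}}$ and, for each, finitely many non-negative solutions to $\sum_j A_j = k - c_1(\beta_{\mathrm{sph}})$, you are done without the second compactness step (note this quantity need not be non-negative, but then there are simply no solutions).  Your height-versus-sphere-area argument for the ``in particular'' clause is also correct and more explicit than the paper, which leaves that reduction implicit.

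There is one genuine error, in your suggested fallback.  Stability does \emph{not} force the components of a stable disc to lie in a bounded neighbourhood of $L$: stability is a condition on automorphism groups of components, not on their location in $X$, and a sphere bubble can sit arbitrarily far from $L$ provided it is attached to the rest of the tree.  The reason Gromov compactness applies in the non-compact case---and the reason the paper can simply invoke it---is that for the standard complex structure $X$ is projective over an affine variety, so holomorphic curves of bounded area are confined to a compact set; this is exactly what is spelled out in \cref{lemStdJ}.  If you prefer the Mori-cone route, the finite generation of the effective cone by the torus-invariant $\P^1$'s (indexed by the compact edges of $\Delta$) does hold for non-compact smooth toric $X$ and follows from the fan combinatorics, but you should state and justify it rather than appeal to stability.
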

\begin{proof}
For each $\beta$ in $\mathcal{B}$ fix a holomorphic stable disc $u_\beta$ representing it, and let $\beta_D$ and $\beta_S$ be the total homology classes of the disc and sphere components of $u_\beta$ respectively.  By definition we may choose $u_\beta$ such that $\ip{\omega}{\beta_S} \leq \lambda$.

Applying Gromov compactness to the sphere components of the $u_\beta$ we deduce that as $\beta$ varies the class $\beta_S$ takes only finitely many values.  In particular, there exists an integer $l$ such that $\mu(\beta_S) \geq -2l$ for all $\beta \in \mathcal{B}$, and hence $\mu(\beta_D) \leq 2(k+l)$.  We also know that each $\beta_D$ is of the form $\sum_j m_j [u_j]$ for some non-negative integers $m_j$, with
\[
\mu(\beta_D) = 2 \sum_{j=1}^N m_j \quad \text{and} \quad \ip{\omega}{\beta_D} = \sum_{j=1}^N m_j\lambda_j,
\]
so if $\lambda_\mathrm{max}$ denotes the maximum of the $\lambda_j$ then we have
\[
\ip{\omega}{\beta_D} \leq \lambda_\mathrm{max}\frac{\mu(\beta_D)}{2} \leq \lambda_\mathrm{max}(k+l).
\]

For all $\beta$ in $\mathcal{B}$ we thus have
\[
\ip{\omega}{\beta} \leq \lambda_\mathrm{max}(k+l) + \lambda.
\]
The classes in $\mathcal{B}$ therefore have bounded area, so we can apply Gromov compactness again---this time to the whole stable discs $u_\beta$---to deduce the result.
\end{proof}

Thus \eqref{itmGamma} and \eqref{itmHeights} are proved, and our discussion of \eqref{itmRings} is complete.

\subsection{Leading-order terms}
\label{sscLeading}

We now discuss the final point in the construction of $\ks$, namely step \eqref{itmLeading}.  This is the computation of the leading-order terms of the summands $W_j$ of the superpotential:

\begin{lem}[{Corresponding to \cite[Theorem 4.6]{FOOOtorI}}]
For each $j$ we have
\[
W_j = \lv_j \mod \Lambda_+.
\]
\end{lem}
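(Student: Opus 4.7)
The plan is to analyse which disc classes can contribute to the height-zero part of $W_j$, using the decomposition of stable discs into basic disc and sphere components.

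First I would recall that $W_j = \sum_\beta (D_j \cdot \beta) W_\beta$ where the sum runs over classes $\beta \in H_2(X, L; \Z)$ of Maslov index $2$, and $W_\beta = (\# \mathcal{M}_\beta) T^{\ip{\omega}{\beta}} \tau^{\pd\beta}$. The reduction mod $\Lambda_+$ picks out the contributions from monomials $T^{\ip{\omega}{\beta}} \tau^{\pd\beta}$ of height zero in $\Gamma_\R$. So the task is to identify exactly those classes $\beta$ for which $h(\ip{\omega}{\beta}, \pd\beta) = 0$ and show that the weighted count of such discs matches $\lv_j$.

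Next I would decompose a stable holomorphic disc representing $\beta$ into disc components and sphere components, writing $\beta = \beta_D + \beta_S$. By positivity of intersections (as in the proof of \cref{DiscsInGamma}), each disc component has class a non-negative integer combination of the basic classes, so $\beta_D = \sum_k m_k [u_k]$ with $m_k \geq 0$, and $\beta_S$ is a non-negative sum of classes of holomorphic spheres, hence has area $\ip{\omega}{\beta_S} \geq 0$ with equality iff $\beta_S = 0$. Then $(\ip{\omega}{\beta}, \pd\beta) = \sum_k m_k (\lambda_k, \nu_k) + (\ip{\omega}{\beta_S}, 0)$, and the Height--Intersection Lemma (\cref{HILemma}) tells us this expression has height at least $\ip{\omega}{\beta_S}$. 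So height zero forces $\beta_S = 0$, and then $\mu(\beta) = 2$ combined with $\mu(\beta_D) = 2\sum m_k$ forces exactly one $m_k$ to equal $1$ and the rest to vanish; that is, $\beta = [u_k]$ for some $k$.

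With this reduction in hand, the computation becomes
\[
W_j \equiv \sum_{k=1}^N (D_j \cdot [u_k]) (\# \mathcal{M}_{[u_k]}) T^{\lambda_k} \tau^{\nu_k} \mod \Lambda_+.
\]
By \cref{DiscDivisorPairing} we have $D_j \cdot [u_k] = \delta_{jk}$, so only the $k = j$ term survives, leaving $(\# \mathcal{M}_{[u_j]}) \lv_j$. Thus everything comes down to showing $\# \mathcal{M}_{[u_j]} = 1$. This is where I would invoke the explicit description of the basic discs recalled in \cref{sscToricBG}: the moduli space of smooth holomorphic discs in class $[u_j]$ is a single $T^N/T_K$-orbit of the standard disc $u_j(z) = e^{-i\nu_j \log z} \cdot p$, and after fixing the boundary marked point at $p$ and quotienting by reparametrisation there is exactly one representative. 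Fredholm regularity (Cho--Oh) ensures the virtual count coincides with this geometric count, and the choice of the standard relative spin structure ensures the contribution is $+1$ (see the discussion surrounding \cref{sscksSketch}). The main subtlety to justify carefully is that no contributions from nodal configurations involving sphere bubbles can lower the height back to zero, but this is precisely what the Height--Intersection Lemma combined with positivity of sphere areas rules out, so the argument is rigorous modulo the black-box virtual fundamental chains.
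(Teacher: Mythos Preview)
Your proposal is correct and follows essentially the same route as the paper: decompose into disc and sphere parts, use the height filtration to kill sphere bubbles, use index $2$ to pin down $\beta = [u_k]$, apply the duality $D_j \cdot [u_k] = \delta_{jk}$, and finish with $\#\mathcal{M}_{[u_j]} = 1$. The only minor difference is that the paper argues at the level of the actual stable map (showing it has a single non-constant component equal to a basic disc) and cites \cite[Lemmas 11.1(3), 11.2(5)]{FOOOtorI} for the fact that the relevant Kuranishi structure can be left unperturbed, whereas you argue at the level of the class and cite Cho--Oh regularity; the paper's citation is the more precise justification that the virtual count coincides with the geometric one.
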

\begin{proof}
Suppose that $u$ is a holomorphic stable disc contributing to the leading-order term of $W_j$, and let $\beta_D$ and $\beta_S$ be the classes of its disc and sphere components respectively.  We then have
\[
(\ip{\omega}{u}, [\pd u]) = (\ip{\omega}{\beta_D}, \pd \beta_D) + (\ip{\omega}{\beta_S}, 0),
\]
and we know from the proof of \cref{DiscsInGamma} that the first term on the right-hand side lies in $\Gamma_\R$.  The fact that $u$ contributes to the leading-order term means that the left-hand side lies in $\Gamma_\R \setminus F^{>0}\Gamma_\R$, so we must have $\ip{\omega}{\beta_S} = 0$, and hence $u$ has no sphere components.

Now, in order to be rigid, each disc contributing to $W$ has index $2$, which tells us that the intersection number of $\beta_D$ with the sum of the toric divisors is $1$.  By positivity of intersections we deduce that exactly one component of $u$ meets exactly one of the toric divisors, transversely, and there are no other intersections.  It is then a standard piece of toric geometry (for example by using a toric chart in which the complement of the divisors is identified with $(\C^*)^n$, and the complement of all-but-one of the divisors is identified with $\C \times (\C^*)^{n-1}$, with $L$ given by the product of the unit circles in each case) to see that $u$ has a single non-constant component, which is precisely a basic disc $u_j$ or one of its translates under the torus-action.  Compare with \cite[Lemma 11.1(2)--(3)]{FOOOtorI}.

We thus have for each $j$ that
\[
W_j = \sum_{k=1}^N  \ip{H_j}{[u_k]} W_{[u_k]} = W_{[u_j]} = (\#\mathcal{M}_{[u_j]}) \lv_j \mod \Lambda_+ ,
\]
where $\mathcal{M}_{[u_j]}$ is the moduli space of holomorphic stable discs in class $\beta$ with a boundary marked point constrained to $p \in L$, as defined in \eqref{MbetaDefn}.  We just saw that before applying this constraint the moduli space comprises the free torus orbit of the disc $u_j$.  Fukaya--Oh--Ohta--Ono show \cite[Lemmas 11.1(3) and 11.2(5)]{FOOOtorI} that this unconstrained moduli space is regular, that the evaluation map from the boundary marked point to $L$ is an orientation-preserving diffeomorphism (using the standard spin structure to orient the domain), and that in their construction of the required Kuranishi structures it can be left unperturbed.  This implies that $\#\mathcal{M}_{[u_j]} = 1$, proving the lemma.
\end{proof}

\section{The proof of \cref{Theorem1}}
\label{secProof}

\subsection{The Stanley--Reisner ring}
\label{sscSR}

The heart of the proof of \cref{Theorem1} is to relate $\LHD$ to the Stanley--Reisner ring and study that instead, and this is the subject of the present subsection and the next.  Recall from \cref{rmkGroundField} that \cref{propFOOOks} relies on the ground field containing $\R$, so for simplicity we phrased our results in terms of the Novikov ring over $\C$.  However, \cref{Theorem1} itself holds over any ground field, so \textbf{for the rest of \cref{secProof} we replace $\C$ with an arbitrary ground field $\F$}.  For example, the Stanley--Reisner and Novikov rings are now given by
\[
SR(X) = \F[Z_1, \dots, Z_N] / ( Z_{j_1} \dots Z_{j_k} : F_{j_1} \cap \dots \cap F_{j_k} = \emptyset )
\]
and
\[
\Lambda_0 = \Big\{\sum_{j=1}^\infty a_jT^{l_j} : a_j \in \F \text{ and } l_j \in \R_{\geq 0} \text{ with } l_j \rightarrow \infty \text{ as } j \rightarrow \infty\Big\}.
\]

We begin with:

\begin{lem}
\label{SRRing}
The $\F$-algebra map $\phi : \F[Z_1, \dots, Z_N] \rightarrow \LHD$ defined by $Z_j \mapsto \lv_j$ induces an isomorphism
\[
\overline{\phi} : SR(X) \rightarrow \LHD / (\Lambda_+ \cdot \LHD).
\]
\end{lem}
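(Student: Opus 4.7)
My approach is to produce an explicit $\F$-basis of $\LHD/(\Lambda_+ \cdot \LHD)$ consisting of the images of height-zero monomials, and match it to the standard monomial basis of $SR(X)$ under $\overline{\phi}$. The principal tool is the height-intersection lemma \cref{HILemma} together with its integrality refinement \cref{GammaRSpan}: these identify the height-zero elements of $\Gamma_\R$ with tuples $(\alpha_1, \dots, \alpha_N) \in \Z_{\geq 0}^N$ such that $\{F_j : \alpha_j > 0\}$ has a common vertex, which correspond to the monomials $\lv^\alpha \coloneqq \lv_1^{\alpha_1} \dots \lv_N^{\alpha_N}$. Since $\Delta$ has a vertex and hence contains no affine line, any non-empty intersection of facets is a non-empty face, which likewise contains no line and thus contains a vertex of $\Delta$. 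This index set therefore coincides with $\{\alpha : \bigcap_{\alpha_j > 0} F_j \neq \emptyset\}$, which is precisely the index set for the standard $\F$-basis of $SR(X)$ (the monomials not killed by a square-free generator of the Stanley--Reisner ideal).

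To show $\phi$ descends to $\overline{\phi}$, suppose $F_{j_1} \cap \dots \cap F_{j_k} = \emptyset$. Then $\sum_i (\lambda_{j_i}, \nu_{j_i})$ is a non-intersecting sum in $\Gamma_\R$, so by \cref{HILemma} it has positive height; equivalently, $\phi(Z_{j_1} \dots Z_{j_k}) = \lv_{j_1} \dots \lv_{j_k}$ has positive height in $\LHD$. Any $\xi \in \LHD$ of positive height $h_0$ can be written as $\xi = T^{h_0} \cdot (T^{-h_0} \xi)$, where $T^{-h_0} \xi$ lies in $\LHD$: each monomial $T^{s}\tau^{\nu}$ appearing in $\xi$ corresponds to an element with intersecting-sum coefficient of $(1,0)$ equal to $s \geq h_0$, so subtracting $h_0$ keeps all monomial heights non-negative and they still tend to infinity. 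Hence $\xi \in \Lambda_+ \cdot \LHD$, and in particular $\phi$ maps the Stanley--Reisner ideal into $\Lambda_+ \cdot \LHD$, so $\overline{\phi}$ is well-defined.

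Finally I verify bijectivity of $\overline{\phi}$. For surjectivity, any $\xi \in \LHD$ has only finitely many height-zero monomials in its support (since monomial heights tend to infinity), and the remaining part has positive height and so lies in $\Lambda_+ \cdot \LHD$ by the previous paragraph; the finite height-zero part is an $\F$-combination of the $\phi(Z^\alpha)$. For injectivity, any non-trivial $\F$-linear relation among the $\phi(Z^\alpha)$ modulo $\Lambda_+ \cdot \LHD$ pulls back to a non-zero finite combination of distinct height-zero monomials lying in $\Lambda_+ \cdot \LHD$, hence of positive height; but such a combination has height exactly zero by the uniqueness statement in \cref{HILemma}, a contradiction. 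The main technical point to keep straight throughout is the identification of $\Lambda_+ \cdot \LHD$ with the set of positive-height elements of $\LHD$, but this falls out of the definitions once one notes that the height of a finite sum of positive-height elements remains positive (as the height of each monomial of each summand, and hence of each monomial of the sum, is positive).
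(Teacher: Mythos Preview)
Your proof is correct and follows essentially the same route as the paper's: both identify explicit $\F$-bases of the two sides (monomials indexed by tuples $\alpha$ with $\bigcap_{\alpha_j>0} F_j \neq \emptyset$) and show $\overline{\phi}$ matches them bijectively via the height--intersection lemma together with \cref{GammaRSpan}. One small slip in your second paragraph: for a monomial $T^s\tau^\nu$ the intersecting-sum coefficient of $(1,0)$ is its \emph{height} $h(s,\nu)$, not the $\R$-component $s$ itself (e.g.\ $\lv_1 = T^{\lambda_1}\tau^{\nu_1}$ has height $0$ but $s=\lambda_1$), though your conclusion that $T^{-h_0}\xi \in \LHD$ is unaffected since it is the height, not $s$, that you need to be at least $h_0$.
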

\begin{proof}
First note that the ideal defining the Stanley--Reisner ring has an $\F$-basis given by monomials $Z_1^{m_1} \dots Z_N^{m_N}$ (with the $m_j$ non-negative integers) such that
\begin{equation}
\label{mjIntersection}
\bigcap_{j : m_j > 0} F_j
\end{equation}
is empty, so the ring itself has a basis given by those monomials for which the intersection is \emph{non-empty}.

On the other hand, directly from the definition of $\LHD$ we see that $\LHD / (\Lambda_+ \cdot \LHD)$ is the quotient of the monoid ring $\F[\Gamma_\R]$ by the span of the monomials of strictly positive height.  It therefore has a basis given by those monomials of height zero.  (Warning: there are two different types of `monomial' in play.  In the previous paragraph we were talking about formal expressions $Z_1^{m_1} \dots Z_N^{m_N}$, whilst we are now talking about elements $T^\lambda \tau^\nu$ of $\F[\Gamma_\R]$ corresponding to a single element of $\Gamma_\R$.)

By the height--intersection lemma (\cref{HILemma}), each generator of the Stanley--Reisner ideal is sent to an element of $\F[\Gamma_\R]$ of strictly positive height, so $\phi$ does indeed induce a map $\overline{\phi}$ as claimed.  Moreover, combining height--intersection with \cref{GammaRSpan} we see that every monomial in $\F[\Gamma_\R]$ of height zero has a unique representation of the form $\lv_1^{m_1} \dots \lv_N^{m_N}$ such that \eqref{mjIntersection} is non-empty.  This means that $\overline{\phi}$ induces a bijection between an $\F$-basis of its domain and an $\F$-basis of its codomain, and hence is an isomorphism.
\end{proof}

We can now prove the Stanley--Reisner presentation \eqref{SRPresentation}.  Although this is a standard fact, the argument is somewhat novel in that it uses leading-order quantum topology to prove a statement in classical topology (the usual proof involves identifying $SR(X)$ with the torus-equivariant cohomology of $X$ and then setting the equivariant parameters to zero):

\begin{prop}
\label{propSRPresentation}
The $\F$-algebra map $\psi : \F[Z_1, \dots, Z_N] \rightarrow H^*(X; \F)$ defined by $Z_j \mapsto H_j$ induces an isomorphism
\[
\overline{\psi} : SR(X) \Big/ \Big( \sum_{j=1}^N \nu_j Z_j \Big) \rightarrow H^*(X; \F).
\]
\end{prop}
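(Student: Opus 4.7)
The plan is to construct an inverse to $\overline{\psi}$ by reducing the Kodaira--Spencer map of \cref{propFOOOks} modulo $\Lambda_+$. A pleasant feature of this approach is that neither the Stanley--Reisner relations nor the linear relation $\sum_j \nu_j H_j = 0$ need be checked separately in $H^*(X;\F)$---they emerge automatically.

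First I would identify the reduction of the codomain of $\ks$ modulo $\Lambda_+$. Because $W_j \equiv \lv_j$ modulo $\Lambda_+$ by \cref{propFOOOks}, the ideals generated by $\sum_j \nu_j W_j$ and $\sum_j \nu_j \lv_j$ coincide modulo $\Lambda_+ \cdot \LHD$, and combining this observation with \cref{SRRing} yields
\[
\Big(\LHD \Big/ \Big(\sum_{j=1}^N \nu_j W_j\Big)\Big) \Big/ \Lambda_+ \;\cong\; SR(X) \Big/ \Big(\sum_{j=1}^N \nu_j Z_j\Big).
\]
The domain of $\ks$ reduces modulo $\Lambda_+$ to $H^*(X;\F)$ by the defining property of quantum cohomology, so $\ks$ induces an $\F$-algebra homomorphism
\[
\overline{\ks} : H^*(X;\F) \rightarrow SR(X) \Big/ \Big(\sum_{j=1}^N \nu_j Z_j\Big), \qquad H_j \mapsto Z_j.
\]

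The composition $\overline{\ks} \circ \psi : \F[Z_1,\dots,Z_N] \to SR(X) / (\sum_j \nu_j Z_j)$ is then an $\F$-algebra map sending each polynomial generator $Z_j$ to itself, so it is the canonical surjection. Consequently $\psi$ annihilates both the Stanley--Reisner ideal and the element $\sum_j \nu_j Z_j$, so it descends to the map $\overline{\psi}$ of the proposition; and $\overline{\ks} \circ \overline{\psi} = \id$ forces $\overline{\psi}$ to be injective. Surjectivity is immediate from \cref{DjGenerate}, since the image of $\overline{\psi}$ contains the generators $H_j$ of $H^*(X;\F)$.

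The main difficulty is not in this deduction but in the input: the argument leans on the full weight of \cref{propFOOOks} for the existence of a $\Lambda_0$-algebra homomorphism $\ks$ sending $H_j$ to a class $W_j$ with $W_j \equiv \lv_j$ modulo $\Lambda_+$. Granted that, the Stanley--Reisner presentation drops out as an essentially formal consequence.
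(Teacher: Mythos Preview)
Your approach is essentially the same as the paper's---reduce $\ks$ modulo $\Lambda_+$ to get a left inverse $\overline{\ks}$ to $\overline{\psi}$---but there is a genuine gap in the middle step.

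From $\overline{\ks}\circ\psi = \pi$ (the canonical surjection) you conclude that $\psi$ annihilates the Stanley--Reisner ideal and $\sum_j \nu_j Z_j$. This is the wrong direction: knowing that $\overline{\ks}\circ\psi$ kills an element only gives $\ker\psi \subset \ker\pi$, not $\ker\pi \subset \ker\psi$. To deduce that $\psi$ itself kills those elements you would need $\overline{\ks}$ to be injective, which is precisely what you are trying to establish. So at this point you do not yet know that $\psi$ descends to a map $\overline{\psi}$ on $SR(X)/(\sum_j \nu_j Z_j)$, and the identity $\overline{\ks}\circ\overline{\psi}=\id$ is not yet available.

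The paper closes this gap by checking the two families of relations directly in $H^*(X;\F)$ before invoking $\overline{\ks}$: the Stanley--Reisner monomials vanish because the cup product is Poincar\'e dual to intersection and the corresponding divisors have empty intersection; the linear relations $\sum_j \nu_j H_j = 0$ come from the long exact sequence of the pair $(X,L)$, using that $\pd[u_j]=\nu_j$ and the duality between $[u_j]$ and $H_j$. Only after $\overline{\psi}$ is known to be well-defined does the paper use $\overline{\ks}$ as a left inverse to prove injectivity. Your argument becomes correct once you insert these two direct checks; the rest of your write-up (the identification of the reduced codomain via \cref{SRRing}, surjectivity via \cref{DjGenerate}, and the use of $\overline{\ks}$ for injectivity) matches the paper.
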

\begin{proof}
Since the $H_j$ generate the cohomology algebra, the map $\psi$ is surjective.  And since the cup product is Poincar\'e dual to the intersection product, each monomial $H_1^{m_1}\dots H_N^{m_N}$ for which \eqref{mjIntersection} is empty vanishes in $H^*(X; \F)$.  Hence $\psi$ factors through $SR(X)$.

Next we need to check that $\psi$ annihilates the $n$-dimensional system of expressions $\sum_j \nu_j Z_j$.  To do this recall that we have dual short exact sequences
\begin{gather*}
0 \rightarrow H_2(X; \Z) \rightarrow H_2(X, L; \Z) \rightarrow H_1(L; \Z) \rightarrow 0
\\ 0 \leftarrow H^2(X; \Z) \leftarrow H^2(X, L; \Z) \leftarrow H^1(L; \Z) \leftarrow 0
\end{gather*}
and the middle groups have dual bases $[u_1], \dots, [u_N]$ and $H_1, \dots, H_N$ respectively.  In the top sequence the image of $[u_j]$ in $H_1(L; \Z)$ is $\nu_j$, so the kernel of the map $H^2(X, L; \Z) \rightarrow H^2(X; \Z)$ is spanned by $\sum_j \nu_j H_j$, and hence $\psi$ does indeed send $\sum_j \nu_j Z_j$ to zero.

Finally we need to check that $\overline{\psi}$ is injective, and this is where the quantum topology comes in.  By \cref{propFOOOks} we have a $\Lambda_0$-algebra homomorphism $\ks : QH^*(X; \Lambda_0) \rightarrow \LHD / (\sum_j \nu_j W_j)$ whose reduction modulo $\Lambda_+$ is an $\F$-algebra homomorphism
\[
\overline{\ks} : H^*(X; \F) \rightarrow \LHD \Big/ \Big(\Lambda_+ \cdot \LHD + \Big(\sum_{j=1}^N \nu_j W_j\Big) \Big)
\]
which sends $H_j$ to $\lv_j$.  By \cref{SRRing} the codomain can be identified with $SR(X) / (\sum_j \nu_j Z_j)$ and under this identification $\overline{\ks}$ constitutes a left inverse to $\overline{\psi}$.  This means that $\overline{\psi}$ is injective, and therefore an isomorphism.
\end{proof}

\begin{rmk}
\label{rmkSRObjections}
One might object to this argument, and in particular to its reliance on \cref{propFOOOks}, for (at least) two reasons: it seems to require the ground field $\F$ to contain $\R$, and also to employ the difficult torus-equivariant Kuranishi machinery of Fukaya--Oh--Ohta--Ono.  But since we only need to construct $\overline{\ks}$, not its lift $\ks$, neither of these is in fact the case.  The point is that when working modulo $\Lambda_+$ one can ignore all sphere bubbling, since stable discs with sphere bubbles contribute monomials of positive height.  This just leaves trees of smooth holomorphic discs, and all moduli spaces of such configurations are already regular because of torus-invariance of the standard complex structure (\cite[Lemma 3.2]{EL1} deals with the case of a single disc, whilst the torus action on the disc moduli spaces ensures that the fibre products defining trees are transverse).  Such configurations can therefore be counted over $\Z$---so we can actually replace $\F$ by $\Z$---and without recourse to virtual perturbation theory, and the map $\overline{\ks}$ can be defined using the pearl model of Biran--Cornea \cite{BCQS} (but fixing the complex structure and varying the Morse data to achieve transversality, as in \cite[Appendix A]{Sm}).  One needs to use the fact that $H^*(X; \Z)$ is generated as a ring by $H^2(X, L; \Z)$ to see that this map lands in the subalgebra of $HF^*(L, L)$ generated by the unit, in the sense discussed at the end of \cref{sscksSketch}.
\end{rmk}

\subsection{The main lemmas}
\label{sscMainLemmas}

Having established the connection between $SR(X)$ and both $\LHD$ and $H^*(X; F)$, we next prove the two key technical lemmas of the paper:

\begin{lem}
\label{SRisCM}
The Stanley--Reisner ring $SR(X)$ is Cohen--Macaulay.
\end{lem}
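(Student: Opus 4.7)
The plan is to recognize $SR(X)$ as the Stanley--Reisner ring of an explicit simplicial complex and verify Reisner's criterion. Specifically, let $\partial\Delta^*$ denote the simplicial complex on vertex set $\{1, \dots, N\}$ whose faces are those $\sigma \subset \{1, \dots, N\}$ for which $\bigcap_{j \in \sigma} F_j$ is non-empty; directly from the definition this is the Stanley--Reisner complex of $SR(X)$, and the Delzant condition makes it pure of dimension $n-1$ with maximal faces $\{j : v \in F_j\}$ indexed by the vertices $v$ of $\Delta$.

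Reisner's criterion reduces the lemma to showing that for every face $\sigma$ of $\partial\Delta^*$ (including the empty face), $\tilde H_i(|\mathrm{lk}_{\partial\Delta^*}(\sigma)|; \F) = 0$ whenever $i < \dim \mathrm{lk}_{\partial\Delta^*}(\sigma)$. I would first identify links geometrically: if $\sigma$ corresponds to the face $F_\sigma = \bigcap_{j \in \sigma} F_j$ of $\Delta$, the Delzant local normal form around $F_\sigma$ exhibits $\mathrm{lk}_{\partial\Delta^*}(\sigma)$ as the analogous simplicial complex $\partial F_\sigma^*$ built from $F_\sigma$ viewed as a Delzant polyhedron in its own affine hull, and $F_\sigma$ still contains a vertex because purity forces $\sigma$ to extend to some maximal face of $\partial\Delta^*$. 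Reisner's condition thus reduces to a single geometric statement: for every Delzant polyhedron $P$ with a vertex, $\tilde H_i(|\partial P^*|; \F) = 0$ for every $i < \dim P - 1$.

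I would then establish this statement, splitting on compactness. If $P$ is compact, $\partial P^*$ is the boundary complex of the polar dual simplicial polytope (simplicial because the Delzant condition makes $P$ simple), so $|\partial P^*| \cong S^{\dim P - 1}$; this also establishes the Gorenstein conclusion promised when $\Delta$ is compact. If $P$ is non-compact, I would adapt the sweep argument from the proof of \cref{DjGenerate}: the functional $\nu = \sum_i \lambda_i \nu_i$ attains its minimum at the chosen vertex and is unbounded above on $P$, because any nonzero recession direction pairs non-negatively with each $\nu_i$ but cannot pair trivially with all of them (the $\nu_i$ meeting at a vertex span $\mathfrak{t}$). Ordering the vertices of $P$ by increasing $\nu$-value then induces an order on the maximal simplices of $\partial P^*$ which is a shelling (a non-compact analogue of the Bruggesser--Mani construction), and the unboundedness of $\nu$ forces every attachment to be along a proper subcomplex of the boundary of the new simplex, so $|\partial P^*|$ is contractible.

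The main obstacle is the non-compact case: carefully verifying that the sweep yields a shelling in which every attachment is proper requires combinatorial bookkeeping that makes precise the intuition that non-compactness of $P$ always leaves an ``opening'' in $\partial P^*$ through which $|\partial P^*|$ retracts onto the starting simplex.
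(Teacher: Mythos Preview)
Your plan is sound and would yield a correct proof, but it takes a genuinely different route from the paper. The paper does not verify Reisner's criterion link by link; instead it invokes Munkres's topological reformulation (the ring is Cohen--Macaulay whenever the geometric realisation is a manifold, possibly with boundary, whose reduced $\F$-homology vanishes below top degree) and shows directly that $|\Sigma|$ is homeomorphic to $S^{n-1}$ or $B^{n-1}$. This is done by writing down an explicit continuous bijection $\rho$ from $|\Sigma|$ onto the slice $\partial\Delta^* = \partial C \cap (\{1\}\times H_1(L;\R))$ of the cone $C$ already built in \cref{sscCone}, with bijectivity coming straight from the height--intersection lemma (\cref{HILemma}); the slice is visibly a sphere or a ball according to whether $0$ lies in its interior, which by \cref{CWhenDeltaCompact} is exactly the compact/non-compact dichotomy.

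Your approach is more combinatorial and self-contained: it avoids the cone $C$ and the height--intersection machinery entirely, and the reduction of links to the same question for the faces $F_\sigma$ is a clean recursive step. The price is the non-compact endgame, where you must check by hand that the vertex ordering really is a shelling with all attachments proper. Your sketch for this is essentially correct---the decisive point is that $\nu$ is unbounded above on a non-compact $P$, so no vertex is a local maximum of $\nu$ and hence every dual simplex has at least one free facet---but you should take the coefficients in $\nu$ to be generic (linearly independent over $\Q$) as in the proof of \cref{DjGenerate}, rather than the $\lambda_j$ defining the polyhedron, to guarantee distinct $\nu$-values at distinct vertices. The paper's argument sidesteps this bookkeeping by treating the compact and non-compact cases uniformly via the map $\rho$, at the cost of depending on the cone apparatus developed earlier in the paper.
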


\begin{lem}
\label{RegSeq}
The components of $\sum_j \nu_j Z_j$ (with respect to any basis of $H_1(L; \Z)$) form a regular sequence in $SR(X)$.
\end{lem}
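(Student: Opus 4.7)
The plan is to combine the Cohen--Macaulay property of $SR(X)$ (\cref{SRisCM}) with a short geometric analysis of $\Spec SR(X)$. In a Cohen--Macaulay $\mathbb{N}$-graded $\F$-algebra $R$ of Krull dimension $d$, any $d$ homogeneous elements of positive degree form a regular sequence if and only if they form a system of parameters, that is, if and only if the quotient is a finite-dimensional $\F$-vector space. The ring $SR(X)$ has Krull dimension $n$, because its underlying simplicial complex (with faces the subsets $\{j_1, \dots, j_k\}$ satisfying $F_{j_1} \cap \dots \cap F_{j_k} \neq \emptyset$) has its maximal faces in bijection with vertices of $\Delta$, and the Delzant condition says that each such face contains exactly $n$ elements. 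Fixing a basis of $H_1(L; \Z) \cong A$, the components of $\sum_j \nu_j Z_j$ are $n$ homogeneous linear forms $\ell_i = \sum_j (\nu_j)_i Z_j$, and since the ideal they generate is independent of the basis choice it suffices to show that $SR(X)/(\ell_1, \dots, \ell_n)$ is finite-dimensional over $\F$.

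For this I would compute the vanishing locus $V(\ell_1, \dots, \ell_n) \subset \Spec SR(X)$ one irreducible component at a time. The irreducible components of $\Spec SR(X)$ are in bijection with vertices $v$ of $\Delta$: if the facets meeting at $v$ are $F_{j_1}, \dots, F_{j_n}$, the corresponding component is the coordinate subspace $\Spec \F[Z_{j_1}, \dots, Z_{j_n}] \cong \mathbb{A}^n_\F$, obtained by setting the remaining $Z_j$ to zero. Restricted to this component, each $\ell_i$ becomes $\sum_{k=1}^n (\nu_{j_k})_i Z_{j_k}$, so the coefficient matrix of the system $\ell_1 = \dots = \ell_n = 0$ has its columns given by the coordinate vectors of $\nu_{j_1}, \dots, \nu_{j_n}$. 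By the Delzant condition these vectors form a $\Z$-basis of $A$, so the matrix has determinant $\pm 1$, is invertible over $\F$, and the system cuts the component down to the single point at the origin. Hence $V(\ell_1, \dots, \ell_n)$ globally reduces to the origin, $SR(X)/(\ell_1, \dots, \ell_n)$ is Artinian, and Cohen--Macaulayness upgrades the system of parameters to a regular sequence.

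The main obstacle, insofar as there is one, lies upstream in \cref{SRisCM}: once Cohen--Macaulayness of $SR(X)$ is established, the argument reduces to standard commutative algebra plus an elementary linear algebra computation on each coordinate-subspace component of $\Spec SR(X)$. The conceptual point is that the Delzant condition is precisely what forces the restricted linear system to be non-degenerate, so that the vanishing locus collapses to the origin on every irreducible component; without it the matrix of $\nu_{j_k}$'s could be singular over $\F$ and the argument would fail.
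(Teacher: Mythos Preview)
Your argument is correct and takes a genuinely different route from the paper's. Both proofs reduce to the Cohen--Macaulay property (\cref{SRisCM}) plus a dimension count showing that $SR(X)/(\ell_1,\dots,\ell_n)$ has Krull dimension zero, but they establish the latter differently. The paper invokes the Stanley--Reisner presentation \cref{propSRPresentation} to identify the quotient with $H^*(X;\F)$, which is visibly finite-dimensional; since \cref{propSRPresentation} is proved in the paper using the leading-order Kodaira--Spencer map $\overline{\ks}$, this creates a (mild) dependence on Floer-theoretic input. Your approach instead computes the vanishing locus directly, decomposing $\Spec SR(X)$ into its coordinate-subspace components indexed by vertices of $\Delta$ and using the Delzant condition to see that the restricted linear system is non-degenerate on each. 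This is more elementary and entirely self-contained within commutative algebra and polyhedral combinatorics; in particular it shows that \cref{RegSeq} does not logically require \cref{propSRPresentation}. What the paper's route buys is narrative economy---it reuses a result already established---and it makes visible the connection to $H^*(X;\F)$, whereas your route makes visible exactly where the Delzant condition enters (your observation that without it the matrix of normals could be singular over $\F$ is the precise point).
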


The former is only used a step towards the latter, and really needs the fact that the ground field $\F$ is field, not an arbitrary ring (which was irrelevant in the previous subsection).

\begin{proof}[Proof of \cref{SRisCM}]
Let $\Sigma$ be the simplicial complex whose vertices are the facets $F_1, \dots, F_N$ of $\Delta$, and in which a collection of vertices spans a face of $\Sigma$ if and only if the intersection of the corresponding facets is non-empty.  The Stanley--Reisner ring of a simplicial complex is defined to be the polynomial ring on its vertices, modulo the ideal spanned by products of vertices which are not contained in a single face, so by construction the Stanley--Reisner ring of $\Sigma$ is exactly $SR(X)$.

Reisner \cite[Theorem 1]{ReisnerCMQuotients} showed that such a ring is Cohen--Macaulay if and only if the reduced homology of the geometric realisation $|\Sigma|$, and of the link $L$ of each face, is zero in all degrees below $\dim \Sigma$, respectively $\dim L$.  Munkres \cite[Theorem 3.1]{Munkres} showed that this is equivalent to the purely topological condition
\[
\widetilde{H}_i(|\Sigma|; \F) = H_i(|\Sigma|, |\Sigma| \setminus \{p\}; \F) = 0
\]
for all $p$ in $|\Sigma|$ and all $i < \dim \Sigma$, and in particular \cite[Corollary 3.2]{Munkres} that the ring is Cohen--Macaulay whenever $|\Sigma|$ is a topological manifold with or without boundary whose reduced homology over $\F$ vanishes in degrees less than its dimension.  It therefore suffices to prove the following claim: $|\Sigma|$ is homeomorphic to either the sphere $S^{n-1}$ (when $X$ is compact) or the ball $B^{n-1}$ (when $X$ is non-compact).

Let $\Delta^*$ be the slice $C \cap (\{1\} \times H_1(L; \R))$ through $C$, which is precisely the space of $\R_{\geq 0}$-linear combinations
\begin{equation}
\label{DeltaDual}
s(1, 0) + \frac{t_1}{\lambda_1} (\lambda_1, \nu_1) + \dots + \frac{t_N}{\lambda_N} (\lambda_N, \nu_N)
\end{equation}
satisfying $s+t_1+\dots+t_N = 1$, and let $\pd \Delta^*$ denote its intersection with $\pd C$.  Equivalently, forgetting the $\{1\}$ factor, $\Delta^*$ is the convex hull in $H_1(L; \R)$ of the point $0$ and the rescaled normals $\nu_j/\lambda_j$, and $\pd \Delta^*$ is its ordinary boundary but with the interiors of the faces containing $0 \in H_1(L; \R)$ deleted.  By the latter description, $\pd \Delta^*$ is homeomorphic to $S^{n-1}$ if $0$ lies in the interior of $\Delta^*$---which is equivalent by \cref{CWhenDeltaCompact} to $\Delta$ being compact---or $B^{n-1}$ otherwise, so our claim is reduced to the following: $|\Sigma|$ is homeomorphic to $\pd \Delta^*$.  \Cref{figDeltaDual} shows $\Delta^*$ and $\pd \Delta^*$ for $\C\P^1$ and $\C$; note that $\pd \Delta^*$ is $S^0$ and $B^0$ in the two cases respectively.

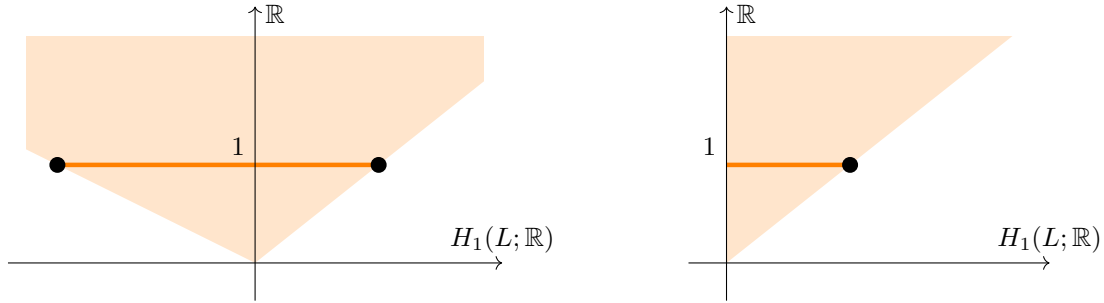
\begin{figure}[ht]
\centering
\begin{tikzpicture}
[blob/.style={circle, draw=black, fill=black, inner sep=0, minimum size=\blobsize}]
\def\blobsize{1.5mm}
\def\l{0.8}
\def\ltot{1.3}
\def\xmax{3}
\def\ymin{-0.1}
\def\ymax{3}
\def\h{1.3}

\begin{scope}
\begin{scope}
\clip (-\xmax, \ymin) rectangle (\xmax, \ymax);
\fill[orange, opacity=0.2] (0, 0) -- ($8*(1, \l)$) -- ++(0, 5) -- ++(-16, 0) -- ($8*(-1, \ltot-\l)$) -- cycle;
\end{scope}
\draw[line width=0.6mm, orange] ($\h/\l*(1, \l)$) node[blob]{} -- ($\h/(\ltot-\l)*(-1, \ltot-\l)$) node[blob]{};
\draw (0, \h) node[anchor=south east]{\small $1$};
\draw[->] (-\xmax-0.25, 0) -- (\xmax+0.25, 0);
\draw[->] (0, \ymin-0.4) -- (0, \ymax+0.4);
\draw (0, \ymax+0.3) node[anchor=west]{\small $\R$};
\draw (\xmax+0.25, 0) node[anchor=south]{\small $H_1(L; \R)$};
\end{scope}

\def\xmin{-0.1}
\def\xmax{4}

\begin{scope}[shift={(6.2, 0)}]
\begin{scope}
\clip (-\xmax, \ymin) rectangle (\xmax, \ymax);
\fill[orange, opacity=0.2] (0, 0) -- ($8*(1, \l)$) -- ++(0, 5) -- ++(-8, 0) -- cycle;
\end{scope}
\draw[line width=0.6mm, orange] ($\h/\l*(1, \l)$) node[blob]{} -- (0, \h);
\draw (0, \h) node[anchor=south east]{\small $1$};
\draw[->] (\xmin-0.4, 0) -- (\xmax+0.25, 0);
\draw[->] (0, \ymin-0.4) -- (0, \ymax+0.4);
\draw (0, \ymax+0.3) node[anchor=west]{\small $\R$};
\draw (\xmax+0.25, 0) node[anchor=south]{\small $H_1(L; \R)$};
\end{scope}

\end{tikzpicture}
\caption{The cone $C$ (shaded), polytope $\Delta^*$ (thick line), and partial boundary $\pd \Delta^*$ (dark blobs) for $\C\P^1$ (left) and $\C$ (right).\label{figDeltaDual}}
\end{figure}

The space $|\Sigma|$ can be described as the subspace of $\R_{\geq 0}^N$ comprising those tuples $(t_1, \dots, t_N)$ with $t_1+\dots+t_N = 1$ such that
\begin{equation}
\label{tjIntersection}
\bigcap_{t_j > 0} F_j \neq \emptyset.
\end{equation}
There is a natural continuous map $\rho$ from this space to $\Delta^*$ given by
\[
\rho(t_1, \dots, t_N) = \frac{t_1}{\lambda_1}(\lambda_1, \nu_1) + \dots + \frac{t_N}{\lambda_N}(\lambda_N, \nu_N)
\]
and we claim that this is a bijection onto $\pd \Delta^*$.  Since the spaces involved are compact Hausdorff, this is enough to show that $\psi$ is a homeomorphism from $|\Sigma|$ to $\pd \Delta^*$ and complete the proof.

Well, viewing $\Delta^*$ as the slice $C \cap (\{1\} \times H_1(L; \R))$, the space $\pd \Delta^*$ consists of those elements of $\Delta^*$ of height zero.  By the height--intersection lemma (\cref{HILemma}), such elements are precisely those sums \eqref{DeltaDual} with $s+t_1+\dots+t_N = 1$ such that $s=0$ and such that \eqref{tjIntersection} is satisfied.  This tells us that $\rho$ lands in $\pd \Delta^*$ and that it is surjective onto this space.  Moreover, height--intersection also tells us that expressions in this sum form are unique, proving that $\rho$ is injective.  Therefore $\rho$ gives a homeomorphism from $|\Sigma|$ to $\pd \Delta^*$ as claimed, proving \cref{SRisCM}.
\end{proof}

\begin{rmk}
Munkres's paper actually deals with the special cases of the sphere and the ball separately \cite[Theorems 2.1 and 2.2]{Munkres}, and for the sphere shows the stronger result that the Stanley--Reisner ring is Gorenstein.
\end{rmk}

\begin{proof}[Proof of \cref{RegSeq}]
Fix an arbitrary basis $\eps_1, \dots, \eps_n$ of $H^1(L; \F)$.  We need to show that the elements
\[
c_j = \sum_{k=1}^N \ip{\eps_j}{\nu_k} Z_k,
\]
form a regular sequence in $SR(X)$.

First note that $SR(X)$ can be graded by giving each $Z_j$ degree $2$ (we could have said degree $1$, but $2$ is more natural when considering the map $\psi : SR(X) \rightarrow H^*(X; \F)$), and that each $c_j$ is homogeneous of degree $2$.  We claim that it suffices to show that the $c_j$ form a regular sequence after localising at the irrelevant ideal $I$ spanned by homogeneous elements of positive degree.

Indeed, suppose that the $c_j$ form a regular sequence in the localisation $SR(X)_I$ and that $a_kc_k$ is zero in $SR(X) / (c_1, \dots, c_{k-1})$ for some $k$ and some $a_k \in SR(X)$.  We are required to show that $a_k$ lies in the ideal $(c_1, \dots, c_{k-1})$ in $SR(X)$, and by our hypothesis we know that it lies in the corresponding ideal in $SR(X)_I$.  Note that each homogeneous part of $a_kc_k$ is zero in $SR(X) / (c_1, \dots, c_{k-1})$, so (by applying the argument that follows to each homogeneous part of $a_k$ in turn) we may assume that $a_k$ itself is homogeneous.

Since $a_k$ lies in the localised ideal $(a_1, \dots, a_{k-1})SR(X)_I$, we can write
\[
a_k = \frac{a_1c_1}{q_1} + \dots + \frac{a_{k-1}c_{k-1}}{q_{k-1}}
\]
for some $a_1, \dots, a_{k-1}$ and $q_1, \dots, q_{k-1}$ in $SR(X)$, where the $q_j$ have non-zero constant term (when expressed as polynomials in the $Z_j$).  Clearing the denominators, singling out the homogeneous part of degree $\deg a_k$, and dividing through by the product of the constant terms in the $q_j$, we obtain an expression for a $a_k$ as an $SR(X)$-linear combination of $a_1, \dots, a_{k-1}$, as desired.  This proves the claim.

We are left to check that $c_1, \dots, c_n$ form a regular sequence in the Noetherian local ring $SR(X)_I$.  By \cref{SRisCM}, this ring is Cohen--Macaulay, so by \cite[Exercise 26.2.D]{VakilAG} it is enough to show that
\[
\dim_\mathrm{K} SR(X)_I/(c_1, \dots, c_n) \leq \dim_\mathrm{K} SR(X)_I - n.
\]
Here the subscript $\mathrm{K}$ indicates Krull dimension.

From the Stanley--Reisner presentation \cref{propSRPresentation} we know that the quotient
\[
SR(X)_I/(c_1, \dots, c_n)
\]
is the localisation of $H^*(X; \F)$ at the irrelevant ideal.  This is just $H^*(X; \F)$ itself, so is finite-dimensional as an $\F$-vector space and hence of Krull dimension zero.  We're now just left to check that $SR(X)_I$ has Krull dimension at least $n$, and to prove this reorder the facets so that $F_1, \dots, F_n$ intersect at some vertex $v$.  Then $SR(X)_I / (Z_{n+1}, \dots, Z_{N})$ is the localisation of $\F[Z_1, \dots, Z_n]$ at the irrelevant ideal, which is $n$-dimensional, so $SR(X)_I$ itself has dimension at least $n$.
\end{proof}

\subsection{The proof of \cref{Theorem1}}
\label{sscProof}

Suppose that we are in the setting of \cref{Theorem1}, namely that we have elements $\hv_1, \dots, \hv_N$ in $\LHD$ satisfying $\hv_j = \lv_j \mod \Lambda_+$ for each $j$.  The rings $\Lambda_0$ and $\LHD$ are not very well-behaved algebraically since the powers of $T$ may be arbitrarily small or large, so in order to make the main algebraic step go through we will `finitise' them, by restricting the allowed powers of $T$ to a submonoid of $\R_{\geq 0}$ and cutting off all powers above a fixed bound.  After introducing these better-behaved rings, $R$ and $S$, we use \cref{RegSeq} to prove the vanishing of a $\Tor$ group (\cref{lemTor}), deduce that the correspondingly finitised Jacobian ring is free as an $R$-module (\cref{lemFreeR}), and then `un-finitise' to obtain \cref{Theorem1}.

First, let $G_0$ denote the submonoid of $\R_{\geq 0}$ generated by the elements $\theta_v(\lambda_j, \nu_j)$ as $j$ ranges over $1, \dots, N$ and $v$ ranges over the vertices of $\Delta$ (if $v$ lies in the facet $F_j$ then $\theta_v(\lambda_j, \nu_j)$ is just $0$).  If $c = (\lambda, \nu)$ is an element of $\Gamma_\R$, then for all vertices $v$ the number $\theta_v(c)$ lies in $h(c) + G_0$, where $h(c)$ is the height of $c$.  To see this, recall from \cref{GammaRSpan} that $c$ can be written as $h(c)(1, 0) + c'$, where $c'$ is a $\Z_{\geq0}$-linear combination of the $(\lambda_j, \nu_j)$, so for each $v$ we have
\[
\theta_v(c) = h(c) + \theta_v(c') \in h(c) + G_0.
\]

\begin{defn}
\label{defnGGamma}
Define $G$ to be the submonoid of $\R_{\geq 0}$ generated by $G_0$ and the heights of the monomials appearing in $\hv_1, \dots, \hv_N$.  Note that $G$ is discrete, since within each $\hv_j$ the heights of the monomials tend to infinity.  Define $\Gamma$ to be the subset of $\Gamma_\R$ comprising elements whose heights lie in $G$.
\end{defn}

\begin{lem}
$\Gamma$ is a submonoid of $\Gamma_\R$, i.e.~it is closed under addition.
\end{lem}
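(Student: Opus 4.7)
The plan is to exploit the observation, already recorded in the paragraph preceding \cref{defnGGamma}, that for any $c \in \Gamma_\R$ and any vertex $v$ we have $\theta_v(c) \in h(c) + G_0$. This essentially reduces the claim to a bookkeeping exercise about the additivity of height up to elements of $G_0$.

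Concretely, I would proceed as follows. Take $c_1, c_2 \in \Gamma$. Since $\Gamma_\R$ is manifestly a submonoid of $\R \oplus H_1(L; \Z)$ (being the intersection of the convex cone $C$ with a subgroup), their sum $c_1 + c_2$ lies in $\Gamma_\R$, so the only thing to verify is that $h(c_1 + c_2) \in G$. For each vertex $v$ of $\Delta$, write $\theta_v(c_i) = h(c_i) + g_{v,i}$ with $g_{v,i} \in G_0$. Using the linearity of $\theta_v$ and the definition of height, we compute
\[
h(c_1 + c_2) = \min_v \theta_v(c_1 + c_2) = h(c_1) + h(c_2) + \min_v \bigl( g_{v,1} + g_{v,2} \bigr).
\]
The minimum on the right is attained at some vertex $v^\ast$, so it equals $g_{v^\ast, 1} + g_{v^\ast, 2}$, which lies in $G_0$ since $G_0$ is a monoid. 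Combining this with $h(c_1), h(c_2) \in G$ and the fact that $G \supseteq G_0$ is closed under addition yields $h(c_1 + c_2) \in G$, as required.

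There is no serious obstacle here: the one thing worth flagging is that one should not confuse $\min_v (g_{v,1} + g_{v,2})$ with $\min_v g_{v,1} + \min_v g_{v,2}$ (the minimising vertices can differ), but since we only need membership in $G_0$—not equality with any particular generator—this point is immaterial once one observes that the minimum is itself one of the summands and hence automatically in $G_0$.
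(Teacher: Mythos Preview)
Your proof is correct and follows essentially the same approach as the paper's: both use the observation that $\theta_v(c) \in h(c) + G_0 \subset G$ for each vertex $v$, then exploit linearity of $\theta_v$ and the fact that $h(c_1+c_2) = \min_v(\theta_v(c_1)+\theta_v(c_2))$ is attained at some particular vertex and hence lies in $G$. Your version is slightly more explicit in isolating the $g_{v,i}$ and in flagging the non-distributivity of the minimum, but the argument is the same.
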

\begin{proof}
Let $c_1$ and $c_2$ be elements of $\Gamma$.  For each vertex $v$ of $\Delta$ the number $\theta_v(c_1)$ lies in $h(c_1) + G_0 \subset G$ by the comment preceding \cref{defnGGamma}, and similarly $\theta_v(c_2)$ lies in $G$.  Writing $h(c_1+c_2)$ as $\min_v \theta_v(c_1)+\theta_v(c_2)$, we deduce that the former lies in $G$.
\end{proof}

The monoids $G$ and $\Gamma$ inherit filtrations from $\R_{\geq 0}$ and $\Gamma_\R$, namely $F^\lambda G = G \cap [\lambda, \infty)$ and $F^\lambda \Gamma = \Gamma \cap (C + (\lambda, 0))$.  These filtrations are inherited in turn by their monoid rings.

\begin{defn}
Fix a positive real number $g$ in $G$.  Define $R$ to be the quotient ring $\F[G] / F^g \F[G]$, and $\m$ to be its maximal ideal $F^{>0}\F[G] / F^g\F[G]$.  Similarly, define $S$ to be $\F[\Gamma] / F^g\F[\Gamma]$.  By construction of $\Gamma$ the ring $S$ contains the elements $\hv_1, \dots, \hv_N$, and we define $J \subset S$ to be the ideal generated by $\sum_j \nu_j \hv_j$.
\end{defn}

Note that $S/F^{>0}S$ is naturally identified with $\LHD/(\Lambda_+ \cdot \LHD) \cong SR(X)$, and that $F^{>0}S$ coincides with $\m S$.  The homological-algebraic statement we will need later is:

\begin{lem}
\label{lemTor}
The group $\Tor_1^R(S/J, R/\m)$ is zero.
\end{lem}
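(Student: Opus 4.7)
The plan is to use the freeness of $S$ as an $R$-module to reduce the vanishing of $\Tor_1^R(S/J, R/\m)$ to the injectivity of the natural map $J/\m J \to S/\m S$, and then to establish this injectivity via the regular sequence property of \cref{RegSeq}.

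First I would verify that $S$ is a free $R$-module. The height--intersection lemma (\cref{HILemma}), combined with \cref{GammaRSpan}, gives every $c \in \Gamma$ a unique decomposition $c = h(c)(1,0) + c''$ with $c'' \in \Gamma$ of height zero. This exhibits $\Gamma$ as a free $G$-set with basis the height-zero elements, so $\F[\Gamma]$ is a free $\F[G]$-module on that basis; since $F^g\F[\Gamma] = F^g\F[G] \cdot \F[\Gamma]$ (every monomial of height $\geq g$ factors as $T^{h(c)}$ times a height-zero monomial), the quotient $S$ is correspondingly free over $R$. Consequently $\Tor_1^R(S, R/\m) = 0$, and tensoring the short exact sequence $0 \to J \to S \to S/J \to 0$ with $R/\m = \F$ yields
\[
\Tor_1^R(S/J, \F) = \ker\bigl(J/\m J \to S/\m S\bigr).
\]
Under the identification $S/\m S \cong SR(X)$ of \cref{SRRing}, the components $a_1, \dots, a_n \in J$ of $\sum_j \nu_j \hv_j$ with respect to a chosen basis of $H_1(L;\Z)$ reduce to the components $\bar a_1, \dots, \bar a_n$ of $\sum_j \nu_j Z_j$, which form a regular sequence by \cref{RegSeq}.

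The remaining step is to show $\m S \cap J \subset \m J$. Given $\xi = \sum_k r_k a_k \in \m S$, its reduction modulo $\m S$ is a syzygy $\sum_k \bar r_k \bar a_k = 0$ of the $\bar a_k$ in $SR(X)$. The regular sequence hypothesis makes the Koszul complex exact, so this syzygy module is generated by the trivial relations $\bar a_i e_k - \bar a_k e_i$; equivalently there exist $\bar s_{ki} \in SR(X)$ with $\bar s_{ki} = -\bar s_{ik}$ and $\bar r_k = \sum_i \bar s_{ki} \bar a_i$. I would pick an antisymmetric lift $s_{ki} \in S$ (arbitrary lifts of $\bar s_{ki}$ for $k < i$, then $s_{ki} = -s_{ik}$ for $k > i$, and $s_{kk} = 0$); then $r_k = \sum_i s_{ki} a_i + r'_k$ with $r'_k \in \m S$, and
\[
\xi = \sum_{k, i} s_{ki} a_i a_k + \sum_k r'_k a_k.
\]
The first sum vanishes by antisymmetry of $s$ combined with commutativity of $S$ and the choice $s_{kk} = 0$, while the second lies in $\m S \cdot J = \m J$. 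Hence $\xi \in \m J$, completing the proof.

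The main obstacle is choosing the right organisation: one should not try to resolve $J$ directly over $R$, but instead exploit flatness of $S$ over $R$ to transfer the Koszul structure of $SR(X)$ back up to $S$. The antisymmetry of the lift is essential, since without it the extraneous term $\sum_{k,i} s_{ki} a_i a_k$ sits only in $J^2$ rather than in $\m J$, and no further reduction is available.
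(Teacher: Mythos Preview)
Your proof is correct but organised differently from the paper's. The paper proceeds by induction on the number of generators of $J$: setting $J_k = (\widehat{c}_1, \dots, \widehat{c}_k)$ it shows $\Tor_1^R(S/J_k, R/\m) = 0$ for all $k$, using at each step only that $\widehat{c}_k$ is a non-zero-divisor in $(S/J_{k-1})/\m(S/J_{k-1})$ to conclude $\widehat{c}_k S' \cap \m S' = \m \widehat{c}_k S'$ for $S' = S/J_{k-1}$. You instead work with the full ideal $J$ at once, reducing to $J \cap \m S \subset \m J$ and then invoking the vanishing of $H_1$ of the Koszul complex on $\bar a_1, \dots, \bar a_n$ (equivalently, that the syzygy module of a regular sequence is generated by the trivial relations) to rewrite any expression for $\xi$ so that the coefficients lie in $\m S$. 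The paper's inductive argument is a little more elementary in that it never explicitly names the Koszul complex, but of course the induction is precisely how one proves Koszul acyclicity for regular sequences, so the two arguments are close cousins. Your antisymmetric lift trick is a clean way to avoid having to iterate or pass to quotients, and your verification that the double sum $\sum_{k,i} s_{ki} a_i a_k$ vanishes (which works in all characteristics because the diagonal terms are forced to zero) is correct.
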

\begin{proof}
We borrow an argument from \cite[Exercise A3.17b]{Eis}.  By \cref{RegSeq} (which was actually inspired by the needs of this argument) and \cref{SRRing}, the components of $\sum_j \nu_j \hv_j$ in $S$ form a regular sequence in $SR(X) = S/\m S$.  Denoting these components by $\widehat{c}_1, \dots, \widehat{c}_n$ (their reductions modulo $\m$ coincide with the $c_1, \dots, c_n$ from the proof of \cref{RegSeq}), we claim that
\[
\Tor_1^R(S/J_k, R/\m) = 0
\]
for all $k$, where $J_k$ is the ideal $(\widehat{c}_1, \dots, \widehat{c}_k) \subset S$.  Taking $k=n$ gives the desired vanishing.

We argue by induction on $k$.  The $k=0$ case follows from the fact that $S$ is free as an $R$-module (a basis is given by the monomials of height $0$).  For the inductive step take $k \geq 1$, assume the result holds for $S/J_{k-1}$, and consider the long exact sequence in $\Tor_*^R(-, R/\m)$ associated to the short exact sequence of $R$-modules
\[
0 \rightarrow \widehat{c}_k(S/J_{k-1}) \rightarrow S/J_{k-1} \rightarrow S/J_k \rightarrow 0.
\]
Combining the inductive hypothesis with this long exact sequence gives
\[
\Tor_1^R(S/J_k, R/\m) \cong \ker \Big( \widehat{c}_k(S/J_{k-1}) \otimes_R R/\m \rightarrow S/J_{k-1} \otimes_R R/\m \Big),
\]
and if we abbreviate $S/J_{k-1}$ to $S'$ then we can express the right-hand side as
\[
\ker ( \widehat{c}_kS' / \m\widehat{c}_kS' \rightarrow S'/\m S') = (\widehat{c}_k S' \cap \m S')/\m\widehat{c}_kS'.
\]
The fact that the $\widehat{c}_j$ form a regular sequence modulo $\m$ means that $\widehat{c}_k$ is not a zero-divisor in $S'/\m S'$, so
\[
\widehat{c}_k S' \cap \m S' = \m\widehat{c}_kS',
\]
completing the inductive step and in turn proving the lemma.
\end{proof}

Now fix height zero monomials $e_1, \dots, e_m$ in $\LHD$ which form an $\F$-basis for
\[
\LHD \Big/ \Big( \Lambda_+ \cdot \LHD + \Big( \sum_{j=1}^N \nu_j \hv_j \Big) \Big) \cong S/(\m S + J) \cong H^*(X; \F).
\]
Note that we can also view the $e_j$ as elements of $S$, which we similarly denote by $e_j$.  Our ultimate claim is that the $e_j$ form a free $\Lambda_0$-basis for $\LHD / (\sum_j \nu_j \hv_j)$---this is the content of \cref{Theorem1}---but first we prove the corresponding statement for $S$:

\begin{prop}
\label{lemFreeR}
The $e_j$ form a free $R$-basis for $S / J$.
\end{prop}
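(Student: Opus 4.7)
The plan is to show that the $R$-linear map
\[
\phi \colon R^m \longrightarrow S/J, \qquad (r_1, \dots, r_m) \mapsto \sum_j r_j e_j
\]
is an isomorphism, deducing surjectivity from a Nakayama-style argument and injectivity from the $\Tor$-vanishing in \cref{lemTor}. The ancillary observation that makes both steps clean is that the maximal ideal $\m \subset R$ is nilpotent, so no Noetherian or finite-generation hypothesis on $S/J$ is required.

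First I would verify this nilpotence. Because $\Delta$ has only finitely many vertices (at most $\binom{N}{n}$) and finitely many facets, $G_0$ is finitely generated as a monoid by non-negative reals; moreover, within each $\hv_j$ the heights of the monomial summands tend to infinity, so only finitely many of them lie in any bounded interval. Hence $G \cap [0, g]$ is finite, the smallest positive element $\delta$ of $G$ is strictly positive, and $\m^k \subseteq F^{k\delta} R$ vanishes as soon as $k\delta \geq g$.

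For surjectivity, the choice of the $e_j$ together with the identifications $S/\m S \cong SR(X)$ from \cref{SRRing} and \cref{propSRPresentation} show that the reduction
\[
\overline{\phi} \colon (R/\m)^m \longrightarrow S/(\m S + J) \cong H^*(X; \F)
\]
is an isomorphism. Writing $Q$ for the cokernel of $\phi$, we therefore have $Q/\m Q = 0$, so $Q = \m Q = \m^2 Q = \dots$, and nilpotence of $\m$ forces $Q = 0$.

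For injectivity, let $K = \ker \phi$ and consider the short exact sequence $0 \to K \to R^m \to S/J \to 0$. Tensoring with $R/\m$ and using that $R^m$ is free (so $\Tor_1^R(R^m, R/\m) = 0$) together with \cref{lemTor}, the $\Tor$ long exact sequence collapses to
\[
0 \longrightarrow K/\m K \longrightarrow (R/\m)^m \xrightarrow{\ \overline{\phi}\ } S/(\m S + J) \longrightarrow 0.
\]
Since $\overline{\phi}$ is an isomorphism, $K/\m K = 0$, and nilpotence of $\m$ gives $K = 0$. The substantive work is therefore all in \cref{lemTor} (and ultimately in \cref{RegSeq}); the present step is formal Nakayama-style bookkeeping, with the only subtle point being the verification that $\m$ is genuinely nilpotent so that Nakayama applies without any further hypothesis.
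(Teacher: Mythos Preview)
Your proof is correct and follows essentially the same approach as the paper: surjectivity via Nakayama (the paper unwinds this as a downward induction on height, which is the same thing given the discreteness of $G$), and injectivity via the short exact sequence combined with the $\Tor$-vanishing of \cref{lemTor} to obtain $K = \m K$. The paper even lists your nilpotence-of-$\m$ argument explicitly as one of three equivalent ways to conclude $K = 0$, so the only difference is that you isolate the nilpotence up front and invoke it uniformly for both halves.
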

\begin{proof}
The ring $S$ is spanned over $\F$ by monomials $s$ corresponding to elements of $\Gamma$ of height less than $g$.  In order to prove that the $e_j$ span $S/J$ over $R$ it therefore suffices, by downward induction on height, to show that any such $s$ can be written as an $R$-linear combination of the $e_j$ plus an error term in $J$ and an error term of height strictly greater than $h(s)$.

So fix an arbitrary $s$, and write it in the form $T^{h(s)} s_0$, where $s_0$ is a monomial in $S$ of height zero.  Since the $e_j$ span $S/(\m S+J)$, there exist $a_1, \dots, a_m$ in $\F$ such that
\[
s_0 - (a_1 e_1 + \dots + a_m e_m)
\]
lies in $\m S + J$.  This means that $s$ is given by $(T^{h(s)}a_1) e_1 + \dots + (T^{h(s)}a_m) e_m$ plus error terms in $J$ and in $T^{h(s)}\m S$.  The latter clearly has height greater than $h(s)$, so our inductive argument is complete and the $e_j$ do indeed span $S/J$ over $R$.

Now consider the natural map $i : F \rightarrow S/J$, where $F$ is the free $R$-module on the basis $e_j$.  We have just seen that $i$ is surjective, so letting $K$ denote its kernel we have a short exact sequence
\[
0 \rightarrow K \rightarrow F \rightarrow S/J \rightarrow 0.
\]
Reducing modulo $\m$, the map $F \rightarrow S/J$ becomes an isomorphism (because the $e_j$ form an $\F$-basis for $S/(\m S+J)$), so we obtain an exact sequence
\[
\dots \rightarrow \Tor_1^R(S/J, R/\m) \rightarrow K/\m K \rightarrow 0.
\]
The $\Tor$ group vanishes by \cref{lemTor}, and so $K = \m K$, which then forces $K$ to vanish---for example, consider a putative non-zero element of minimal height to obtain a contradiction; or use the fact that $K = \m^j K$ for all $j$ and that $\m^j = 0$ for $j$ sufficiently large; or simply appeal to Nakayama's lemma.  Therefore $i$ is an isomorphism of $R$-modules, as claimed.
\end{proof}

And finally we reach the main goal:

\begin{proof}[Proof of \cref{Theorem1}]
Both $R$ and $\Lambda_0$ are algebras over $\F[G]$, and we have natural identifications
\[
R \otimes_{\F[G]} \Lambda_0 \cong \Lambda_0 / T^g \Lambda_0 \text{\quad and \quad} S \otimes_{\F[G]} \Lambda_0 = \LHD / T^g \LHD.
\]
Tensoring \cref{lemFreeR} with $\Lambda_0$ over $\F[G]$ therefore tells us that
\begin{equation}
\label{modTg}
\LHD \Big/ \Big( T^g \LHD + \Big( \sum_{j=1}^N \nu_j \hv_j \Big) \Big)
\end{equation}
is a free $\Lambda_0 / T^g \Lambda_0$-module on the basis $e_1, \dots, e_m$.  Since $g$ was an arbitrary element of $G$, we can take the (inverse) limit as $g \rightarrow \infty$, and see that the limit of \eqref{modTg} is a free $\Lambda_0$-module on $e_1, \dots, e_m$.  This limit carries a natural map from $\LHD$, which is surjective since the latter is $T$-adically complete, and the kernel is precisely
\[
\bigcap_{g \in G} \Big( T^g \LHD + \Big( \sum_{j=1}^N \nu_j \hv_j \Big) \Big) = \clos \Big( \sum_{j=1}^N \nu_j \hv_j \Big).
\]
We conclude that
\[
\LHD \Big/ \clos \Big( \sum_{j=1}^N \nu_j \hv_j \Big)
\]
is free over $\Lambda_0$.
\end{proof}

\section{Further discussion}
\label{secFurtherDiscussion}

\subsection{Non-compactness}
\label{sscNonCompact}

In compact symplectic manifolds $X$, the standard Gromov compactness results \cite[Section 1.5.B]{Gro} ensure that the sums defining the quantum cohomology product converge, by guaranteeing that moduli spaces of pseudoholomorphic spheres of bounded area can be compactified with bubbled curves.  When $X$ is non-compact this no longer need be the case, since sequences of curves can escape (consider $\C\P^2$ minus a point, and a family of lines moving towards that point, for a trivial example).  One therefore needs to place conditions on $X$, and the chosen almost complex structure $J$, at its ends in order to prevent this.

Explicitly, the property we need is as follows: for any compact set $K \subset X$ and any positive real number $E$, there exists a compact set $K' \subset X$ such that any stable $J$-holomorphic sphere in $X$ which meets $K$ and has area at most $E$ is contained in $K'$.  Let $\mathcal{J}_{QH}$ denote the set of $\omega$-compatible almost complex structures on $X$ that satisfy this condition (whenever we talk about almost complex structures they will implicitly be assumed to be $\omega$-compatible).  The Kuranishi machinery of \cite{FukayaOno} allows one to define quantum cohomology with respect to any $J$ in $\mathcal{J}_{QH}$ (which may be empty), and by considering cobordisms between moduli spaces of curves this definition depends only on the path-component of $J$ in $\mathcal{J}_{QH}$.

Quantum cohomology defined in this way is not then a purely symplectic invariant in general, but there are various extra assumptions and choices one can make to ensure that $\mathcal{J}_{QH}$ is non-empty and to single out a distinguished component.  For example, in the toric case we have the following:

\begin{lem}
\label{lemStdJ}
For toric $X$ the set $\mathcal{J}_{QH}$ contains the standard integrable complex structure.
\end{lem}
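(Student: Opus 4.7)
The plan is to prove the stronger statement, independent of the area bound $E$, that every $J_0$-holomorphic stable sphere in $X$ meeting the compact set $K$ has image in a fixed compact subset $K' \subset X$ depending only on $K$. The key input is algebro-geometric: as recalled in \cref{sscToricBG}, $X = \Proj \C[\Sigma_\Z]$ comes with a natural projective (hence proper) morphism $\pi \colon X \to Y$, where $Y = \Spec \C[\Sigma_\Z]_0$ is the affinization. Because $\Sigma$ is the closure of the cone over $\Delta \times \{1\}$ in $\mathfrak{t}^* \times \R_{\geq 0}$, the degree-zero subring $\C[\Sigma_\Z]_0$ is the monoid ring on the lattice points of the recession cone of $\Delta$ in $\mathfrak{t}^*$, and $Y$ is the corresponding affine toric variety. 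The standing assumption that $\Delta$ has a vertex forces this recession cone to be pointed, so $Y$ is well-defined; when $\Delta$ is compact, $Y$ collapses to a point and $\pi$ is the map to a point.

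Next I would observe that every holomorphic map $\P^1 \to Y$ is constant: a closed embedding $Y \hookrightarrow \C^M$ presents the components of such a map as bounded holomorphic functions on the compact Riemann surface $\P^1$, hence as constants by the maximum principle. Applied to a $J_0$-holomorphic stable sphere $u$, the composition $\pi \circ u$ is therefore constant on each irreducible component (each a copy of $\P^1$); since the domain is connected and adjacent components meet at nodes, these constants all agree, and the entire image of $u$ lies in a single fiber of $\pi$. If $u$ meets $K$ then this fiber is $\pi^{-1}(y)$ for some $y \in \pi(K)$, so the image of $u$ is contained in $\pi^{-1}(\pi(K))$. Setting $K' \coloneqq \pi^{-1}(\pi(K))$, which is compact because $\pi(K)$ is compact and $\pi$ is proper, completes the argument.

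The only substantive point is the existence of the proper morphism $\pi$, which is a standard structural feature of the toric variety attached to a polyhedron rather than anything about pseudoholomorphic curves. Once it is available, the rest is automatic; in particular we never invoke any form of Gromov compactness nor use the area bound $E$, which is ultimately why the standard complex structure is so convenient for disc and sphere counts in the non-compact setting.
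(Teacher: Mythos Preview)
Your proof is correct and follows essentially the same approach as the paper: both exploit the projective morphism $\pi\colon X = \Proj \C[\Sigma_\Z] \to \Spec \C[\Sigma_\Z]_0$ to an affine base, use the maximum principle there to conclude that holomorphic spheres lie in fibres, and then invoke properness of $\pi$ to get the compact set $K'$. Your write-up simply makes explicit what the paper compresses into a single sentence---identifying the degree-zero part with the recession cone, handling the nodal domain component by component, and writing down $K' = \pi^{-1}(\pi(K))$---and, like the paper, never actually uses the area bound $E$.
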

\begin{proof}
Recall from \cref{sscToricBG} that the algebro-geometric description of $X$ is as $\Proj \C[\Sigma_\Z]$ where $\Sigma_\Z$ is a submonoid of $A^* \times \Z_{\geq 0}$, graded by its $\Z_{\geq 0}$-component.  It is therefore projective over the affine variety $\Spec \C[\Sigma_\Z]_0$, where the subscript $0$ denotes the degree zero part.  The maximum principle applies in the affine base, while the fibres are compact, so the required property holds.
\end{proof}

The same argument shows that the moduli spaces of holomorphic discs (with respect to the standard complex structure) used to construct the Kodaira--Spencer map and prove \cref{propFOOOks} also have the necessary compactness properties.  The upshot of this is that everything we have done applies to non-compact toric varieties just as it does to compact ones, once we interpret the quantum cohomology as that corresponding to the component of $\mathcal{J}_{QH}$ containing the standard complex structure.  The use of the standard $J$, and in particular its torus-equivariance, is actually crucial in the technical work of Fukaya--Oh--Ohta--Ono which underlies \cref{propFOOOks}, so understanding quantum cohomology for other components of $\mathcal{J}_{QH}$ using similar methods seems out of reach at present (except when $X$ is monotone; see \cref{NonCpctMonotone}).

\subsection{Monotonicity}
\label{sscMonotonicity}

Recall that symplectic manifold $X$ is \emph{monotone} if its first Chern class is a positive multiple of the class $[\omega]$ of the symplectic form in $H^2(X; \R)$, and a Lagrangian submanifold $L \subset X$ is monotone if its Maslov index homomorphism $H_2(X, L) \rightarrow \Z$ is a positive multiple of $[\omega]$ as classes in $H^2(X, L; \R)$.  These notions are respectively the symplectic equivalent of Fano and its relative version.  Restricting to our usual setup where $X$ is toric and $L$ is the toric fibre over $0$ we have the following well-known result:

\begin{lem}
$L$ is monotone if and only if the $\lambda_j$ are all equal, and $X$ is monotone if and only if $\Delta$ can be translated to make this the case (keeping all $\lambda_j$ positive), i.e.~if and only if it has a monotone toric fibre.  $X$ has at most one monotone fibre, except in the case $X = \C^n$ ($n \geq 1$ arbitrary) where it has a one-parameter family of monotone fibres.
\end{lem}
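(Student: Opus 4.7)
The plan is to work in $H^2(X, L; \R)$, where by \cref{DiscDivisorPairing} the classes $H_1, \dots, H_N$ form a basis dual to $[u_1], \dots, [u_N]$. Since $\mu([u_j]) = 2$ and $\ip{[\omega]}{[u_j]} = \lambda_j$, we can write $\mu = 2\sum_j H_j$ and $[\omega] = \sum_j \lambda_j H_j$, so the monotonicity of $L$---i.e.~the requirement that $\mu = 2\kappa[\omega]$ on all of $H_2(X, L; \R)$ for some $\kappa > 0$---is equivalent to $\lambda_j = 1/\kappa$ for every $j$. This handles the first claim.

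For the relationship between $X$-monotonicity and the existence of a monotone fibre, one direction is immediate: restricting the identity $\mu = 2\kappa[\omega]$ for a monotone fibre $L_w$ along $H_2(X) \hookrightarrow H_2(X, L_w)$ gives $c_1 = \kappa[\omega]$ in $H^2(X; \R)$. For the converse, \cref{DiscDivisorPairing} together with the long exact sequence of the pair shows that the kernel of the surjection $H^2(X, L; \R) \twoheadrightarrow H^2(X; \R)$ is spanned by the $n$ linear combinations $\sum_j (\nu_j)_k H_j$ (for $k = 1, \dots, n$) obtained as components of $\sum_j \nu_j H_j$ in any basis of $\mathfrak{t}$. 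If $c_1 = \kappa[\omega]$ in $H^2(X; \R)$, the difference $\sum_j (1 - \kappa\lambda_j) H_j$ therefore lies in this kernel, yielding a $y \in \mathfrak{t}^*$ with $1 - \kappa\lambda_j = \ip{y}{\nu_j}$ for every $j$. Translating $\Delta$ so that $-y/\kappa$ becomes the new origin---which does not change $X$---then replaces each $\lambda_j$ by $1/\kappa > 0$, so this translated polyhedron has a monotone toric fibre over the origin.

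For the uniqueness statement, suppose $L_{w_0}$ and $L_{w_1}$ are both monotone, with common $\lambda$-values $\lambda^{(0)}$ and $\lambda^{(1)}$. Subtracting the defining equations $\lambda_j + \ip{w_i}{\nu_j} = \lambda^{(i)}$ yields $\ip{w_1 - w_0}{\nu_j} = \lambda^{(1)} - \lambda^{(0)}$ for every $j$. If this constant is zero then $w_0 = w_1$, since the $\nu_j$ span $\mathfrak{t}$ whenever $\Delta$ has a vertex. Otherwise, translate so that $w_0 = 0$ (so all $\lambda_j = \lambda^{(0)}$) and pick a vertex $v$ of the translated $\Delta$ with incident normals $\nu_1, \dots, \nu_n$. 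Any extra normal expands as $\nu_k = \sum_{j=1}^n b_{kj} \nu_j$, and evaluating $\ip{w_1}{\nu_k}$ forces $\sum_j b_{kj} = 1$, whence $\ip{v}{\nu_k} = \sum_j b_{kj} \ip{v}{\nu_j} = -\lambda^{(0)} = -\lambda_k$. This places $v$ on the facet $F_k$ as well, contradicting the Delzant requirement that exactly $n$ facets meet at each vertex. Hence $N = n$, forcing $X \cong \C^n$, and in this case one checks directly that the fibres over $w = (c - \lambda_1, \dots, c - \lambda_n)$ with $c > 0$ form the advertised one-parameter family. The main obstacle is this Delzant-contradiction step; everything before it is a formal manipulation of classes in $H^2(X, L)$.
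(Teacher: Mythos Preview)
Your argument is correct, modulo a harmless sign slip in the second paragraph (the translation should send $y/\kappa$, not $-y/\kappa$, to the origin in order to make the new $\lambda_j$ equal to $1/\kappa$). The first two claims are handled essentially as in the paper: both arguments identify $\mu$ and $[\omega]$ in the basis $H_1, \dots, H_N$ of $H^2(X, L; \R)$ and use the long exact sequence of the pair to characterise the kernel of $H^2(X, L; \R) \to H^2(X; \R)$.

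Where you genuinely diverge is in the uniqueness step. The paper argues geometrically: at each vertex $v$ of $\Delta$ there is a ray (the locus of points equidistant from the $n$ facets through $v$), any monotone fibre must lie on every such ray, and two distinct rays meet in at most one point. Your argument is instead algebraic: assuming two monotone fibres with distinct $\lambda$-values, you show that any extra normal $\nu_k$ ($k > n$) must satisfy $\sum_j b_{kj} = 1$, which forces the chosen vertex $v$ to lie on $F_k$ as well, violating the Delzant condition. This is a clean and fully rigorous route to $N = n$; by contrast the paper's ray argument is more visual but leaves the reader to unpack why the rays are really one-dimensional and why distinct rays cannot overlap on an interval. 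Your approach also makes it transparent exactly where the Delzant hypothesis enters, which is a nice feature.
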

\begin{proof}
The basic disc classes $[u_1], \dots, [u_N]$ form a basis for $H_2(X, L; \Z)$, and have areas $\lambda_1, \dots, \lambda_N$.  Moreover, they all have index $2$, so $L$ is monotone if and only if the $\lambda_j$ are all equal.

The toric divisor classes $H_1, \dots, H_N$ in $H^2(X, L; \Z)$ are dual to the $[u_j]$ so we have
\[
[\omega] = \sum_{j=1}^N \lambda_j H_j
\]
in $H^2(X, L; \R)$.  We also know that the first Chern class of $X$ is simply the sum of the $H_j$.  Therefore $X$ is monotone if and only if there exists a positive real number $\lambda$ such that
\[
\lambda \sum_{j=1}^N H_j = \sum_{j=1}^N \lambda_j H_j
\]
in $H^2(X; \R)$.  From the long exact sequence of the pair $(X, L)$, this holds if and only if there exists $b$ in $H^1(L; \R)$ such that
\[
\sum_{j=1}^N (\lambda_j-\lambda) H_j = b \circ \pd
\]
on $H_2(X, L; \R)$, i.e.~such that for all $j$ we have $\lambda_j = \lambda + \ip{b}{\nu_j}$.  Replacing each $\lambda_j$ by $\lambda_j - \ip{b}{\nu_j}$ corresponds exactly to translating $\Delta$ by $b$, so we see that $X$ is monotone if and only if we can make the $\lambda_j$ all equal and positive by translating $\Delta$.

Finally we deal with uniqueness.  For $X = \C^n$, with polyhedron $\{x \in \R^n : x_j \geq -\lambda_j\}$, we know that the fibre over $0$ is monotone if and only if the $\lambda_j$ are equal and positive, so the monotone fibres are parametrised by this common value in $\R_{>0}$.  Geometrically, the monotone fibres lie over a ray emanating from the vertex of the polyhedron.  For general $X$, there is an analogous ray at each vertex of its polyhedron, and any monotone fibre must lie on all of these rays.  When there are at least two vertices (i.e.~when $X$ is not of the form $\C^n$) the intersection of the rays consists of at most one point, so there is at most one monotone fibre.
\end{proof}

For the rest of this subsection we will assume that $X$ is monotone and that $L$ is the (almost) unique monotone toric fibre.  By rescaling $\omega$ we will assume for simplicity that the $\lambda_j$ are all $1$.  We temporarily also restrict to compact $X$---the non-compact case is considered in \cref{sscMonNonCpct}.

Floer theory is much simpler in this setting, and virtual perturbation theory is not necessary.  Everything we need is contained in the foundational paper \cite{BCQS} of Biran and Cornea, where transversality is achieved by varying the almost complex structure and imposing a small Hamiltonian perturbation at the interior input defining $\CO^0$.  In particular, the quantum cohomology of $X$ can be defined over $R[T]$, for any ground ring $R$, whilst the self-Floer cohomology of $L$ (referred to as \emph{Lagrangian quantum homology} in \cite{BCQS}) can be defined over $R[\Gamma]$, where $\Gamma$ is the submonoid of $\Z \oplus H_1(L; \Z)$ generated by $(1, \nu_1), \dots, (1, \nu_n)$ and $(1, 0)$.  No completions are needed.  Note, however, that this Floer group $HF^*(L, L; R[\Gamma])$ is not invariant under Hamiltonian isotopies of one of the copies of $L$ (the coefficient system doesn't really make sense unless the two Lagrangians are equal), and is always non-zero, even if $L$ is displaceable.

\begin{rmk}
Non-vanishing of the group $HF^*(L, L; R[\Gamma]) \otimes_{R[\Gamma]} R[\Z \oplus H_1(L; \Z)]$ obtained by adjoining inverses to the monomials in $R[\Gamma]$ \emph{does} prove non-displaceability of $L$ though.  This follows from the framework of Zapolsky \cite{ZapPSS}, in which the coefficient ring $R[\Z \oplus H_1(L; \Z)]$ can be interpreted as a local system and then compared with the Lagrangian intersection picture of $HF^*$ using his PSS morphism.
\end{rmk}

We can compute $HF^*(L, L; R[\Gamma])$, as sketched in \cref{lemHF} in the general case, but now using the Oh spectral sequence
\[
E_1 = H^*(L; R[\Gamma]) \implies HF^*(L, L; R[\Gamma]),
\]
which is induced by our familiar height filtration on $R[\Gamma]$.  Since $H^*(L; R[\Gamma])$ is generated as a $R[\Gamma]$-algebra by $H^1(L; R)$, the subalgebra of $HF^*(L, L; R[\Gamma])$ generated by the unit $1_L$ is the quotient of $R[\Gamma]$ by the ideal generated by the images of the index $2$ differential $H^1(L; R) \rightarrow H^0(L; R[\Gamma])$ on $E_1$.  This differential sends a class $b$ to $\sum_j \ip{b}{\nu_j} \lv_j$, where $\lv_j = T \tau^{\nu_j}$ as usual (the $T$ here is really $T^{\lambda_j}$, but recall that $\lambda_j = 1$), so the required ideal is $(\sum_j \nu_j \lv_j)$.

The map
\[
\CO^0 : QH^*(X; R[T]) \rightarrow HF^*(L, L; R[\Gamma])
\]
(called the \emph{quantum module action} in \cite{BCQS}) is completely rigorous in this monotone setting, and one computes straightforwardly that $\CO^0(H_j) = \lv_j \cdot 1_L$---for degree reasons the only contributions come from index $2$ discs, and the count can be done using the standard complex structure for which the only such discs are basic ones $[u_j]$ (see \cite[Proposition 4.6.7]{SmDCS} for a more general result, including verification of the signs).  Since the $H_j$ generate $QH^*(X; R[T])$ as a $R[T]$-algebra, the whole image of $\CO^0$ is contained in $(R[\Gamma] / (\sum_j \nu_j \lv_j)) \cdot 1_L$, without appealing to any torus-equivariance as we had to do in the general case, and we obtain a \emph{monotone Kodaira--Spencer map}
\[
\ks_\mathrm{mon} : QH^*(X; R[T]) \rightarrow R[\Gamma] \Big/ \Big(\sum_j \nu_j \lv_j\Big).
\]
This is a homomorphism of $R[T]$-algebras sending $H_j$ to $\lv_j$, giving the monotone analogue of \cref{propFOOOks}.

\begin{rmk}
To get to here, the only pseudoholomorphic curves we have had to consider explicitly are index $2$ discs bounded by $L$, and, as mentioned, for these we can work with the standard complex structure.  There is no need to worry about curves with components contained in the toric divisor, and the potential failure of transversality for such curves is what makes a direct consideration of holomorphic spheres in $X$, and hence a direct computation of $QH^*(X)$, difficult.
\end{rmk}

The main result is then:

\begin{prop}
\label{MonotoneQH}
For any compact monotone toric variety $X$, the map $\ks_\mathrm{mon}$ is an isomorphism of $R[T]$-algebras.
\end{prop}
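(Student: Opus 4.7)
The plan is to mirror the proof of \cref{corksIso}, replacing the $T$-adic completion by the natural grading provided by monotonicity. We assign degree $2$ to each of $T$, $\lv_j$ and $H_j$, and degree $0$ to $\tau^\nu$, so that both $QH^*(X; R[T])$ and the codomain $R[\Gamma]/(\sum_j \nu_j \lv_j)$ become graded $R[T]$-modules with finite-dimensional graded pieces, and $\ks_\mathrm{mon}$ becomes a degree-preserving $R[T]$-linear map. The main simplifications compared with the general setting are that $W_j = \lv_j$ exactly (so no closures are needed) and that the grading lets us replace the finitisation trick of \cref{sscProof} by an induction on degree.

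The main step is the monotone analogue of \cref{Theorem1}: the codomain is free as an $R[T]$-module, with basis given by any set of height-zero monomials whose reductions modulo $T$ form an $R$-basis of $SR(X) / (\sum_j \nu_j Z_j) \cong H^*(X; R)$. The argument mirrors \cref{sscProof} closely. \Cref{SRRing} identifies $R[\Gamma] / (T \cdot R[\Gamma])$ with $SR(X)$, and \cref{RegSeq} supplies the regular-sequence property that powers the whole proof. A Tor-vanishing argument along the lines of \cref{lemTor}, applied directly to $R[\Gamma]$ rather than a finitised quotient (which is safe because $R[\Gamma]$ is $T$-adically separated), shows that the chosen lifts are $R[T]$-linearly independent. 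Spanning is proved as in \cref{lemFreeR} but inducting on degree: any homogeneous element in degree $2k$ reduces modulo $T$ to an $R$-combination of the lifts of degree at most $2k$, with error in $T$ times the degree-$(2k - 2)$ part, which the inductive hypothesis handles.

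To finish, pick homogeneous $R[T]$-bases $\{H^\vee_i\}$ of the domain and $\{e_i\}$ of the codomain, indexed by the same graded set, such that $\ks_\mathrm{mon}(H^\vee_i) \equiv e_i \pmod{T}$; this is possible because reducing $\ks_\mathrm{mon}$ modulo $T$ recovers the classical Stanley--Reisner isomorphism of \cref{propSRPresentation}. Order both bases by weakly increasing degree. Because $\ks_\mathrm{mon}$ preserves degree and $R[T]$ contains no nonzero elements of negative degree, the matrix $M$ of $\ks_\mathrm{mon}$ with respect to these bases is block lower-triangular with diagonal blocks having entries in $R$, one block for each cohomological degree. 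Its reduction modulo $T$ is exactly the matrix of the Stanley--Reisner isomorphism, so each diagonal block is invertible over $R$, and hence $M$ is invertible over $R[T]$. The main obstacle is the freeness step, which requires carefully transporting the Tor-vanishing and regular-sequence machinery of \cref{sscProof} to the graded monotone setting; the concluding matrix argument is then essentially formal.
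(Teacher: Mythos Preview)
Your strategy shares the essential ingredients with the paper's proof (\cref{RegSeq} and the Tor-vanishing argument of \cref{lemTor}), but there is a real gap: you invoke \cref{RegSeq} directly over an arbitrary ring $R$, whereas that lemma---and the Cohen--Macaulay input \cref{SRisCM} on which it rests---is established in the paper only over a \emph{field}. The regular-sequence property is not automatic under base change to a general ring, so your Tor-vanishing step, and hence the freeness of the codomain over $R[T]$, is not justified as written. The paper handles this explicitly: it first assumes $R$ is an integral domain, passes to the fraction field $\F$ so that \cref{RegSeq} applies, and uses freeness of $QH^*(X; R[T])$ as an $R$-module to pull injectivity back from $\F$ to $R$; the general ring is then obtained by proving the statement over $\Z$ and tensoring with $R$. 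Your argument can be repaired along the same lines (or by deducing regularity over $\Z$ from regularity over every field via the graded Koszul complex, and then tensoring), but you need to say so.

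Setting that aside, the paper's organisation is also a bit leaner than yours. Rather than proving the codomain is free over $R[T]$ and then running a block-triangular matrix argument, the paper notes that surjectivity is immediate (the generators $\lv_j$ are hit) and proves injectivity directly: it applies the Tor vanishing to the short exact sequence $0 \to \ker \ks_\mathrm{mon}^\F \to QH^*(X; \F[T]) \to \F[\Gamma]/(\sum_j \nu_j \lv_j) \to 0$, and the Stanley--Reisner isomorphism modulo $T$ then gives $\ker \ks_\mathrm{mon}^\F = T \ker \ks_\mathrm{mon}^\F$, hence $\ker \ks_\mathrm{mon}^\F = 0$. This sidesteps having to establish freeness of the codomain as a standalone step, though your route---mirroring \cref{Theorem1} and \cref{corModIso} more literally---is perfectly valid once the base-ring issue is fixed.
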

\begin{proof}
Assume first that $R$ is an integral domain, and let $\F$ denote its fraction field.  Surjectivity is automatic, since $\ks_\mathrm{mon}$ hits the generators $\lv_j$ of the codomain.  To prove injectivity, note that since $QH^*(X; R[T])$ is a free $R$-module (in fact, a free $R[T]$-module) it injects into its tensor product with $\F$, so it suffices to show that $\F \otimes_\R \ker \ks_\mathrm{mon}$ vanishes.  By exactness of $\F \otimes_R -$, it is therefore enough to show that the induced map
\[
\ks_\mathrm{mon}^\F : QH^*(X; \F[T]) \rightarrow \F[\Gamma] \Big/ \Big(\sum_j \nu_j \lv_j\Big)
\]
is injective.

Now that we are working over a field, we can argue essentially as in \cref{secProof}, although things are simpler because there is no need to finitise the rings.  Explicitly, we can apply \cref{RegSeq} to see that the generators of $(\sum_j \nu_j \lv_j)$ form a regular sequence modulo $T$ in $\F[\Gamma]$, and then use the argument of \cref{lemTor} to obtain the vanishing of the group
\[
\Tor_1^{\F[T]}\bigg(\F[\Gamma] \Big/ \Big(\sum_j \nu_j \lv_j\Big), \F[T]/(T)\bigg).
\]
Applying this to the long exact sequence in $\Tor_*^{\F[T]}(-, \F[T]/(T))$ obtained from
\[
0 \rightarrow \ker \ks_\mathrm{mon}^\F \rightarrow QH^*(X; \F[T]) \xrightarrow{\ \ks_\mathrm{mon}^\F \ } \F[\Gamma] \Big/ \Big(\sum_j \nu_j \lv_j\Big) \rightarrow 0,
\]
and using the Stanley--Reisner presentation to see that $\ks_\mathrm{mon}^\F \otimes \F[T]/(T)$ is an isomorphism, we get $\ker \ks_\mathrm{mon}^\F = T \ker \ks_\mathrm{mon}^\F$.  This then yields $\ker \ks_\mathrm{mon}^\F = 0$, which is exactly what we want.

To extend from integral domains to arbitrary rings $R$, take the result over $\Z$ and tensor with $R$.  (For our present purposes then, we may as well have restricted to the case $R=\Z$, $\F = \Q$ in the above argument.  The case of arbitrary integral domains will be used later though, in \cref{sscBFields}.)
\end{proof}

\subsection{Non-compactness in the monotone case}
\label{sscMonNonCpct}

We have just seen how our results can be refined when $X$ is monotone, under the assumption that it is compact, so we now discuss how to extend to the non-compact case.  Recall from \cref{sscNonCompact} that in order to define quantum cohomology of non-compact manifolds we need to restrict to almost complex structures $J$ in the subset $\mathcal{J}_{QH}$, to prevent $J$-holomorphic curves from escaping to infinity, and that a priori the resulting cohomology ring depends on the path-component of $J$ in $\mathcal{J}_{QH}$.  Similarly, to count the discs needed to define the Floer cohomology of $L$ and the closed--open string map we need to restrict to the subset $\mathcal{J}_{HF}$ of $\mathcal{J}_{QH}$, comprising those $J$ with the following additional property: for any positive real number $E$ there exists a compact set $K \subset X$ such that any stable $J$-holomorphic disc in $X$ with boundary on $L$ and of area at most $E$ is contained in $K$.

Unfortunately, such $J$ may not be compatible with the machinery of Biran--Cornea used to construct $\ks_\mathrm{mon}$ above, since this requires the almost complex structure to be suitably generic.  There are two ways around this: impose geometric conditions to ensure that some such $J$ \emph{are} suitable; or appeal to sophisticated virtual perturbation theory to allow any $J$ in $\mathcal{J}_{HF}$ to be used.  We discuss each of these in turn.

In the first direction, the standard approach to Floer theory on a (not necessarily toric) non-compact symplectic manifold $X$ is to assume that it is \emph{conical at infinity}.  This means that it is equipped with a decomposition, up to an appropriate notion of equivalence, of the form
\[
X = X_\mathrm{in} \cup_{(Y, \alpha)} X_\mathrm{con}.
\]
Here $X_\mathrm{in}$ is a compact codimension-$0$ submanifold of $X$ (the `interior') with boundary $Y$, $\alpha$ is a contact form on $Y$, and $X_\mathrm{con}$ (the `conical end') is given by the positive symplectisation
\[
\lb Y \times [0, \infty) , \diff (e^r \alpha) \rb
\]
of $Y$, where $r$ is the radial coordinate in $[0, \infty)$.  The conical end carries the Liouville vector field $Z$ given by $\pd_r$, and an almost complex structure $J$ on $X$ is \emph{of contact type} if on the conical end we have
\[
\diff(e^r) \circ J = - e^r \alpha
\]
for sufficiently large $r$.  We denote the set of such $J$ by $\mathcal{J}_\mathrm{con}$.  Crucially, a maximum principle applies to $J$-holomorphic curves for all contact-type $J$, which implies that $\mathcal{J}_\mathrm{con}$ is contained in $\mathcal{J}_{QH}$.  Moreover, any two such $J$ can be connected by a path of contact-type almost complex structures, so there is a unique contact-type component of $\mathcal{J}_{QH}$.  A classic reference for these concepts is Seidel's survey of symplectic cohomology \cite{SeidelBiasedView}, for which this is the natural setting (it is also the setting for the open-string version---wrapped Floer theory).

Specialising to toric $X$, and assuming that our monotone toric fibre $L$ is contained in the interior of $X_\mathrm{in}$ (which we can always do by moving its boundary $Y$ outwards along the conical end), we have the following standard result:

\begin{lem}
\label{lemconHF}
The set $\mathcal{J}_\mathrm{con}$ is contained in $\mathcal{J}_{HF}$.  Moreover, any $J$ in $\mathcal{J}_\mathrm{con}$, and any path between two such $J$, can be perturbed within this class to achieve the transversality needed by Biran--Cornea.
\end{lem}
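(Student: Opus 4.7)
The proof splits into a compactness claim and a transversality claim. For the first, the plan is to invoke the standard maximum principle: the contact-type condition $\diff(e^r) \circ J = -e^r \alpha$ is equivalent to $-\diff \diff^c(e^r) = \omega$ on the region where it holds, so $e^r$ is strictly plurisubharmonic with respect to any $J \in \mathcal{J}_\mathrm{con}$, and hence $e^r \circ u$ is subharmonic on the relevant preimage for any $J$-holomorphic $u$. Fix a threshold $R_0$ beyond which the contact-type condition holds, and note that $L \subset X_\mathrm{in}$ implies $e^r|_L$ is bounded above by some constant $M_L$; set $M = \max(M_L, e^{R_0})$. I would then argue componentwise on a stable disc: on each disc component the boundary either maps to $L$ (where $e^r \leq M_L$) or is attached at a node, and on sphere components the subharmonic function $e^r \circ u$ cannot attain its maximum in the contact-type region without being constant there. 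Passing around the dual graph of the stable disc, the maximum principle forces $e^r \circ u \leq M$ throughout, giving a compact $K$ depending only on $L$ and $R_0$—in fact a bound uniform in the area.

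For the transversality claim, the key observation is that the contact-type condition constrains $J$ only on a neighbourhood of infinity in $X_\mathrm{con}$, so perturbations of $J$ supported in a fixed compact subset of $X$ preserve membership in $\mathcal{J}_\mathrm{con}$. Since any non-constant $J$-holomorphic disc with boundary on $L$ has its boundary inside $X_\mathrm{in}$, and since an open dense subset of the disc is mapped injectively into $X$, the standard Sard--Smale arguments of Biran--Cornea \cite{BCQS} apply to give a comeagre set of $J$ in $\mathcal{J}_\mathrm{con}$ for which the moduli spaces of pearly discs (with Morse trajectory connections) are transversely cut out. The path version is the analogous parametric statement, proved in the same way by perturbing the homotopy in the interior.

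The main obstacle, insofar as there is one, is verifying that the space of compactly-supported perturbations is rich enough to separate all simple holomorphic curves appearing in the pearl model. This reduces to checking that every simple $J$-holomorphic disc has an injective point in a region where $J$ is free to vary, and here the maximum principle from the first step again does the work: the image of every such disc is contained in a fixed compact set, and since its boundary lies in $L \subset X_\mathrm{in}$, an open non-empty portion of the disc maps into the interior where the perturbations act freely. The somewhere-injectivity results of McDuff--Salamon then supply the required injective points, and the standard genericity arguments conclude.
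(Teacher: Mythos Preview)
Your approach matches the paper's: maximum principle for containment in $\mathcal{J}_{HF}$, then compactly-supported perturbations of $J$ (which preserve the contact-type condition at infinity) to achieve transversality, with the key observation that all relevant curves enter $X_\mathrm{in}$ so injective points lie in the region where $J$ may be varied. Your compactness argument is actually more explicit than the paper's, which simply invokes the maximum principle without the component-by-component analysis.

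There is, however, one technical point you have overlooked which the paper flags. The transversality ``needed by Biran--Cornea'' includes not just the pearly Floer differential but also the quantum module action $\CO^0$, from which $\ks_\mathrm{mon}$ is built. This involves discs with an \emph{interior} marked point constrained to an ambient cycle, and for such discs somewhere-injectivity arguments via $J$-perturbation alone do not suffice: the discs may be non-simple (multiple covers), and Biran--Cornea handle this by introducing small Hamiltonian perturbations near the interior marked point (see \cite[Section 5.3.6]{BCQS}). One must then check that these Hamiltonian perturbations can be chosen compatibly with the maximum principle---for instance by making them compactly supported or of the form treated in \cite[Section (8a)]{SeidelBiasedView}. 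Your argument as written addresses only the $J$-perturbation side and so leaves this piece of the Biran--Cornea machinery unverified.
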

\begin{proof}[Sketch proof]
Take $J$ in $\mathcal{J}_\mathrm{con}$.  The fact that $J$ is contained in $\mathcal{J}_{HF}$ follows immediately from the maximum principle.  This principle also ensures that non-constant $J$-holomorphic spheres must enter $X_\mathrm{in} \subset X$, and clearly the same is true for $J$-holomorphic discs with boundary on $L$.  Transversality for these curves can therefore be achieved by small perturbations of $J$ on $X_\mathrm{in}$, which do not spoil its contact type at infinity.  Paths in $\mathcal{J}_\mathrm{con}$ can be perturbed similarly, completing the proof.  There is a small complication, in that Biran--Cornea actually need to introduce Hamiltonian perturbations on discs with interior marked points (to deal with the possibility of such discs being non-simple---see \cite[Section 5.3.6]{BCQS} and the paragraphs immediately preceding it for what goes wrong and how Hamiltonian perturbations can fix it), but these perturbations can be chosen in a way that preserves the maximum principle \cite[Section (8a)]{SeidelBiasedView}.
\end{proof}

The result of this is:

\begin{prop}
For any monotone toric variety $X$ (whose moment polyhedron has a vertex) which is conical at infinity, using only classical transversality techniques there is a well-defined `contact type at infinity' interpretation of $QH^*(X; R[T])$, and it is given by \cref{MonotoneQH}.
\end{prop}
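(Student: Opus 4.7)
The plan is to adapt the proof of \cref{MonotoneQH} to the non-compact setting by working entirely within the contact-type class $\mathcal{J}_\mathrm{con}$. \Cref{lemconHF} provides the key input: $\mathcal{J}_\mathrm{con} \subset \mathcal{J}_{HF} \subset \mathcal{J}_{QH}$, and transversality for all the moduli spaces appearing in Biran--Cornea's monotone Floer theory can be achieved by perturbation within $\mathcal{J}_\mathrm{con}$ (with contact-compatible Hamiltonian perturbations on interior marked points where necessary). Consequently, $QH^*(X; R[T])$ (using the contact-type component of $\mathcal{J}_{QH}$), the Floer cohomology $HF^*(L, L; R[\Gamma])$, and the closed--open map $\CO^0$ between them are all well-defined using classical transversality, and by cobordism arguments carried out inside $\mathcal{J}_\mathrm{con}$ they are independent of the specific choice of contact-type $J$.

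Next I would construct the monotone Kodaira--Spencer map by verifying that $\CO^0(H_j) = \lv_j \cdot 1_L$. The degree constraint means only index $2$ discs contribute, and the compact-case argument identifies these (via positivity of intersections together with the local toric structure described in \cref{sscToricBG}) with the $T^N/T_K$-orbit of $u_j$, whose count gives $\lv_j$. For any $J \in \mathcal{J}_\mathrm{con}$ the maximum principle confines index $2$ discs to a compact subset of $X$, so the identification goes through as before. If direct access to the standard complex structure is convenient for pinning down the count with its sign, one may connect it to a contact-type representative by a path in $\mathcal{J}_{HF}$---the standard $J$ lies here by the obvious Floer-theoretic analogue of \cref{lemStdJ}---and transport the computation along the resulting cobordism, which remains compact thanks to the contact-type end of the path.

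Once $\ks_\mathrm{mon}$ is in place, the algebraic portion of the proof of \cref{MonotoneQH} carries over verbatim: surjectivity is automatic since the codomain is generated by the $\lv_j$; injectivity is reduced, by passing to the fraction field of $R$ and exploiting the $R[T]$-freeness of $QH^*$, to the vanishing of the Tor group appearing in the compact proof, which in turn follows from \cref{RegSeq}, the argument of \cref{lemTor}, and the Stanley--Reisner presentation \cref{propSRPresentation}. None of these steps uses compactness of $X$.

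The hardest part is the implicit verification that Biran--Cornea's classical transversality scheme survives the restriction to $\mathcal{J}_\mathrm{con}$---in particular that all pearl moduli spaces appearing in the definition of the quantum module action, together with the cobordisms used to prove that it is an algebra map and to compare different choices of $J$, admit regularisations by contact-type almost complex structures and contact-compatible Hamiltonian perturbations. \Cref{lemconHF} asserts this, but a fully detailed treatment would require carefully checking each moduli space in \cite{BCQS}, which is the main technical cost of extending the monotone case to non-compact conical $X$.
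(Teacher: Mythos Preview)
Your overall plan is right---the algebraic half of \cref{MonotoneQH} carries over unchanged, and \cref{lemconHF} lets you set up $QH^*$, $HF^*$ and $\CO^0$ within $\mathcal{J}_\mathrm{con}$---but there is a genuine gap in the step where you compute $\CO^0(H_j)=\lv_j\cdot 1_L$. The classification of index $2$ discs as torus-translates of the basic discs $u_j$ is a statement about the \emph{standard} complex structure $J_\mathrm{std}$; for a generic contact-type $J$ you have regularity and confinement but no explicit description of the discs, so your first suggestion (``the identification goes through as before'') does not work directly. Your fallback, connecting $J_\mathrm{std}$ to $J_\mathrm{con}$ by a path in $\mathcal{J}_{HF}$, assumes exactly the thing that needs proof: a priori $J_\mathrm{std}$ and $J_\mathrm{con}$ may lie in different components of $\mathcal{J}_{HF}$, and the claim that the cobordism ``remains compact thanks to the contact-type end of the path'' is not valid---compactness along a path requires control at every parameter value, not just at one endpoint.

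The paper handles this by an explicit two-step interpolation. First, choose $J_\mathrm{con}$ to be \emph{conical} (Liouville-invariant for large $r$) and use a monotonicity lemma of Oancea to find a compact set $K$ such that any $J'$ agreeing with $J_\mathrm{con}$ on $K$ has all its index $2$ discs trapped in $K$; this makes $J_\mathrm{con}$ and $J'$ comparable. Second, choose such a $J'$ which also agrees with $J_\mathrm{std}$ outside a compact preimage $\pi^{-1}(C)$ under the projective-over-affine projection $\pi$ from \cref{lemStdJ}, and run the cobordism from $J'$ to $J_\mathrm{std}$ through almost complex structures constant outside $\pi^{-1}(C)$; the maximum principle in the affine base then confines discs to $\pi^{-1}(C)$ along the whole path. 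Neither step is hard once you see it, but some argument of this kind is needed to bridge the two confinement mechanisms (contact-type versus affine projection), and your proposal skips it.
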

\begin{proof}
Everything goes through as in the compact case, now using the maximum principle to prevent curves from escaping.  The only point that needs some care is to check that the superpotential defined using contact-type $J_\mathrm{con}$  (for which all index $2$ discs are regular) agrees with that defined using the standard $J_\mathrm{std}$, since the former appears in the construction of $\ks_\mathrm{mon}$ but the latter appears in the main algebraic result that the Jacobian ring is a free module.  We now explain why this is indeed the case.  The author is indebted to Alex Ritter for making the crucial suggestion of switching the roles of $J_\mathrm{std}$ and $J_\mathrm{con}$ in the following argument.

First note that it suffices to consider a single $J_\mathrm{con}$, since the standard cobordism argument shows that the superpotential is invariant under homotopies between different choices.  Moreover, we may make this choice so that $J_\mathrm{con}$ is \emph{conical}, meaning that it is invariant under the Liouville vector field $\pd_r$ on $X_\mathrm{con}$ for sufficiently large $r$.  This is because a choice of conical contact-type almost complex structure always exists (choose it along a slice $r = \text{const}$, extend it outwards using the Liouville flow, and extend it inwards arbitrarily), and the same argument used in \cref{lemconHF} shows that we can perturb it to obtain regularity without spoiling these properties.

Under this assumption, for any $E > 0$ there exists an $R > 0$ such that any $J_\mathrm{con}$-holomorphic curve with boundary on both $r = R$ and $r = 2R$ has area at least $E$---see \cite[Lemma 1]{OanceaKunneth}.  In particular, any $J_\mathrm{con}$-holomorphic disc on $L$ of area less than $E$ cannot cross $r = 2R$.  Choosing $E$ greater than the area of all index $2$ disc classes (which we can do since $X$ is monotone) we conclude that for any almost complex structure $J'$ on $X$ which coincides with $J_\mathrm{con}$ on the compact set $K$ bounded by $r = 2R$, all $J'$-holomorphic discs of index $2$ are contained within $K$.

Recall from \cref{lemStdJ} that $(X, J_\mathrm{std})$ carries a holomorphic map to some affine space $\C^m$, with projective fibres.  Let $\pi$ denote this map, and take a compact set $C$ in $\C^m$ which contains a neighbourhood of $\pi(K)$.  Now choose $J'$ as in the preceding paragraph which coincides with $J_\mathrm{std}$ outside $\pi^{-1}(C)$.  Pick a generic path $J_t$ from $J'$ to $J_\mathrm{std}$ which is constant outside $\pi^{-1}(C)$, and consider the resulting cobordism of moduli spaces of index $2$ discs.  Gromov compactness applies to this cobordism since any $J_t$-holomorphic disc must stay within $\pi^{-1}(C)$, by considering its projection under $\pi$ and applying the maximum principle in $\C^m$.  We deduce that the $J'$- and $J_\mathrm{std}$-superpotentials agree, and hence that the $J_\mathrm{con}$- and $J_\mathrm{std}$-superpotentials also agree.
\end{proof}

The more powerful (but technology-heavy) approach is to use the results of Fukaya--Oh--Ohta--Ono in \cite{FOOOIntegers}, which show that for (spin) monotone Lagrangians the moduli spaces of discs used in Floer theory can be equipped with virtual fundamental cycles over $\Z$, not just $\Q$.  Letting $\mathcal{J}_W$ denote the union of those components of $\mathcal{J}_{HF}$ on which the superpotential agrees with the standard one, we obtain:

\begin{prop}
\label{NonCpctMonotone}
For any monotone toric variety $X$ (whose moment polyhedron has a vertex), using virtual perturbation techniques $QH^*(X; R[T])$ can be defined for any component of $\mathcal{J}_{QH}$, and for any component meeting $\mathcal{J}_W$ it is given by \cref{MonotoneQH}.  In particular, this holds for the component containing the standard integrable $J$, and for the component corresponding to contact-type $J$ if $X$ is conical at infinity.
\end{prop}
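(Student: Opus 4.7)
The approach is to lift the argument of \cref{MonotoneQH} from the compact setting to arbitrary monotone toric varieties by invoking the virtual perturbation machinery of Fukaya--Oh--Ohta--Ono \cite{FOOOIntegers} for monotone Lagrangians. First, for any path-component $\mathcal{C}$ of $\mathcal{J}_{QH}$ I would pick $J \in \mathcal{C}$ and use the virtual fundamental cycles on moduli spaces of $J$-holomorphic spheres of bounded area to define $QH^*(X; R[T])$; independence of the choice of $J$ within $\mathcal{C}$ follows from the standard cobordism argument applied to a generic path.

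Next, for $J \in \mathcal{J}_{HF}$, I would analogously construct the Lagrangian quantum homology $HF^*(L, L; R[\Gamma])$ and the quantum module action $\CO^0 \colon QH^*(X; R[T]) \to HF^*(L, L; R[\Gamma])$ along the lines of Biran--Cornea, now using the virtual cycles of \cite{FOOOIntegers} in place of classical transversality. The Oh spectral sequence computation from \cref{sscMonotonicity} goes through verbatim and identifies the subalgebra of $HF^*(L, L; R[\Gamma])$ generated by $1_L$ with $R[\Gamma] / \bigl(\sum_j \nu_j W^J_j\bigr)$, where $W^J = \sum_j W^J_j$ is the $J$-superpotential.

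Now fix $J \in \mathcal{J}_W \cap \mathcal{C}$. By definition of $\mathcal{J}_W$ we have $W^J = \sum_j \lv_j$; since in the monotone toric case the basic classes $[u_j]$ have pairwise distinct boundaries $\nu_j$ and are the only Maslov $2$ disc classes admitting holomorphic representatives, this forces $W^J_j = \lv_j$ for each $j$. The map $\CO^0$ therefore factors through an $R[T]$-algebra homomorphism $\ks_\mathrm{mon}^J \colon QH^*(X; R[T]) \to R[\Gamma] / \bigl(\sum_j \nu_j \lv_j\bigr)$ sending $H_j \mapsto \lv_j$. The algebraic argument in the proof of \cref{MonotoneQH} then applies verbatim to conclude that $\ks_\mathrm{mon}^J$ is an isomorphism, since the regular-sequence \cref{RegSeq} and the Stanley--Reisner presentation of $H^*(X; R)$ depend only on $\Delta$. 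The two special cases follow: $J_\mathrm{std}$ lies trivially in $\mathcal{J}_W$, and when $X$ is conical at infinity the contact-type component of $\mathcal{J}_{QH}$ lies in $\mathcal{J}_W$ by the superpotential-comparison argument from the proof of the preceding proposition.

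The main technical obstacle is purely foundational---constructing the Kuranishi structures and virtual cycles underlying both $QH^*(X; R[T])$ and $\CO^0$ for arbitrary $J \in \mathcal{J}_{QH}$, $\mathcal{J}_{HF}$ respectively---but this is precisely the content we are entitled to import from \cite{FOOOIntegers}. The subtlest conceptual step is verifying that $\CO^0$ takes values in the subalgebra of $HF^*(L, L; R[\Gamma])$ generated by $1_L$: in the general framework sketched in \cref{sscksSketch} this used the torus-equivariance of the Fukaya--Oh--Ohta--Ono Kuranishi structures, which is unavailable for arbitrary $J$, but in the monotone setting one may instead argue as in \cref{sscMonotonicity}, combining the generation of $QH^*(X; R[T])$ by the classes $H_j$ (\cref{DjGenerate}) with the direct disc-count expression for $\CO^0(H_j)$.
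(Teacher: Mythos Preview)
Your approach matches the paper's, which gives no explicit proof beyond invoking \cite{FOOOIntegers} and remarking (immediately after the proposition) that torus-equivariance is unnecessary in the monotone case for degree reasons---exactly the point you make in your final paragraph.

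There is one gap. Your justification that $W^J_j = \lv_j$---namely that the basic classes $[u_j]$ are the only Maslov-$2$ classes admitting holomorphic representatives---is valid for the standard complex structure (via positivity of intersections with the toric divisors) but fails for a general $J \in \mathcal{J}_W$. Moreover, the equality $W^J = \sum_k \lv_k$ by itself does not pin down the individual $W^J_j$: distinct index-$2$ classes may share a boundary in $H_1(L;\Z)$ whenever $H_2(X;\Z) \neq 0$, and hence contribute the same monomial to $W^J$ while contributing differently to the $W^J_j$.

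For the two components singled out in the statement this is easily repaired: for the standard $J$ one computes $W^J_j = \lv_j$ directly, and for the contact-type component the cobordism in the proof of the preceding proposition matches the index-$2$ disc counts with those of $J_{\mathrm{std}}$ \emph{class by class} (not merely their sum), so again $W^J_j = \lv_j$. For an arbitrary component of $\mathcal{J}_W$ one should instead observe that the ideal $\bigl(\sum_j \nu_j W^J_j\bigr)$ is generated by the logarithmic derivatives of $W^J$ (\cref{logDerivative}) and therefore depends only on $W^J$, so the codomain of $\ks^J_{\mathrm{mon}}$ is the standard Jacobian ring regardless; establishing that the map is surjective (or an isomorphism modulo $T$) then requires a separate argument not supplied by your sketch.
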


This has the non-obvious consequence that, under the hypotheses of \cref{MonotoneQH}, the ring $QH^*(X; R[T])$ is independent of the choice of component of $\mathcal{J}_{QH}$ used to define it, amongst those components which meet $\mathcal{J}_W$.

As a final comment, the reader may worry that the construction of \emph{integral} virtual fundamental cycles in \cite{FOOOIntegers} may not be compatible with the construction of \emph{torus-equivariant} virtual fundamental cycles in \cite{FOOOMSToric}.  In other words, integrality may be inconsistent with equivariance.  However, as remarked earlier in the discussion of the compact case, torus-equivariance is not needed for monotone $X$: the results which which rely on equivariance for general $X$ hold for simple degree reasons in the monotone setting.

\subsection{Bulk deformations}
\label{sscBulk}

Bulk deformations were introduced by Fukaya--Oh--Ohta--Ono in \cite{FOOObig} to provide additional flexibility in Floer theory.  The idea is as follows: given a class $\mathfrak{b}$ in $H^\mathrm{even}(X; \Lambda_+)$, one replaces each count of pseudoholomorphic curves, which we'll denote by $\#$, with the sum
\[
\sum_{k=0}^\infty \frac{1}{k!} \#(\mathfrak{b}, k),
\]
where $\#(\mathfrak{b}, k)$ schematically counts the same curves as $\#$ but with $k$ (movable) interior marked points introduced and constrained to lie on cycles Poincar\'e dual to $\mathfrak{b}$.  The fact that $\mathfrak{b}$ carries positive Novikov weight (meaning that its coefficients lie in the positive part $\Lambda_+$ of the Novikov ring) ensures that this sum converges, and that its leading order term is $\#(\mathfrak{b}, 0)$, which is the same as the undeformed count $\#$ (as long as the same perturbations are used).

Via this construction one can define bulk-deformed quantum cohomology $QH^*(X, \mathfrak{b}; \Lambda_0)$, which still coincides with the classical cohomology modulo $\Lambda_+$, and bulk-deformed Lagrangian Floer cohomology.  The family of bulk-deformed quantum  cohomology rings is what is usually referred to as the \emph{big} quantum cohomology.  In our setting of toric $X$, there is also a bulk-deformed superpotential $W^\mathfrak{b} = W_1^\mathfrak{b} + \dots + W_N^\mathfrak{b}$ in $\LHD$ and a bulk-deformed Kodaira--Spencer map $\ks_\mathfrak{b}$ (constructed in \cite{FOOOMSToric}), satisfying the natural analogue of \cref{propFOOOks}:

\begin{prop}
\label{propBulkks}
Each $W_j^\mathfrak{b}$ has
\[
W_j^\mathfrak{b} = \lv_j \mod \Lambda_+,
\]
and $\ks_\mathfrak{b}$ is a $\Lambda_0$-algebra homomorphism
\[
QH^*(X, \mathfrak{b}; \Lambda_0) \rightarrow \LHD \Big/ \Big(\sum_{j=1}^N \nu_j W_j^\mathfrak{b} \Big)
\]
which sends $H_j$ to $W_j^\mathfrak{b}$.
\end{prop}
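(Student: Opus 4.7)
The plan is to adapt the construction of $W$ and $\ks$ from \cref{sscksSketch}--\cref{sscLeading} to the bulk-deformed setting, using the torus-equivariant Kuranishi machinery of \cite{FOOOMSToric} as a black box exactly as before. Concretely, write $\mathfrak{b} = \sum_i c_i Z_i$ with $c_i \in \Lambda_+$ and $Z_i$ a collection of torus-invariant even-dimensional cycles whose Poincaré duals span $H^{\mathrm{even}}(X; \C)$. For each disc class $\beta$ and each $k \geq 0$, enhance the moduli space $\mathcal{M}_\beta$ of \eqref{MbetaDefn} by adding $k$ movable interior marked points and constrain them to the $Z_i$ in all possible ways, weighting each resulting configuration by the corresponding product of $c_i$'s and by $1/k!$. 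Summing these weighted counts over $k$ defines $W^{\mathfrak{b}}_\beta$; then set $W_j^{\mathfrak{b}} = \sum_\beta (D_j \cdot \beta) W^{\mathfrak{b}}_\beta$ and $W^{\mathfrak{b}} = \sum_j W^{\mathfrak{b}}_j$.

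Next I would check that $W_j^{\mathfrak{b}}$ actually lands in $\LHD$. The element appearing in front of any configuration is a disc weight multiplied by a product $c_{i_1}\cdots c_{i_k}$; the former lies in $\F[\Gamma_\R]$ by \cref{DiscsInGamma} since the underlying holomorphic stable map is unchanged. The key new input is convergence: since each $c_i$ has strictly positive Novikov valuation, there exists $\eta > 0$ such that every bulk insertion raises height by at least $\eta$, so to cut off at any height $\lambda$ it suffices to consider $k \leq \lambda/\eta$. After bounding $k$, the proof of \cref{Convergence} (applied to discs with $k$ extra interior constraints, which change dimensions but not the finiteness argument) reduces the problem to the undeformed one. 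The leading-order claim is then immediate: reduction modulo $\Lambda_+$ annihilates every $c_i$, leaving only the $k=0$ term, and so $W_j^{\mathfrak{b}} \equiv W_j \equiv \lv_j \pmod{\Lambda_+}$ by the leading-order lemma of \cref{sscLeading}.

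For $\ks_\mathfrak{b}$ I would mirror the construction in \cref{sscksSketch}: define a bulk-deformed closed--open map $\CO^0_\mathfrak{b}$ from $QH^*(X, \mathfrak{b}; \Lambda_0)$ into the bulk-deformed self-Floer cohomology of $L$ with $\LHD$ coefficients, counting discs with one interior marked point constrained to the Poincaré dual of the input, one boundary marked point at $p$, and a packet of additional interior marked points coupled to $\mathfrak{b}$ as above. That $\CO^0_\mathfrak{b}$ is a $\Lambda_0$-algebra map follows from the usual disc-degeneration cobordism of \cref{figDiscDeg}, now with all moduli spaces carrying bulk insertions; the $1/k!$ factors redistribute correctly between the two bubbled configurations by the standard combinatorial identity. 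The bulk-deformed analogue of \cref{lemHF} identifies the unit-generated subalgebra of the target $HF^*$ with $\LHD /(\sum_j \nu_j W_j^{\mathfrak{b}})$ verbatim, since the differential $\diff b$ now computes $(W^{\mathfrak{b}})^b$ by exactly the same argument. Torus-equivariance of the Kuranishi structures, together with torus-invariance of the $Z_i$, forces $\CO^0_\mathfrak{b}$ to land in this subalgebra, yielding $\ks_\mathfrak{b}$. Unwinding the definition at $\alpha = H_j$ gives $\ks_\mathfrak{b}(H_j) = W_j^{\mathfrak{b}}$.

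The main obstacle, as in the undeformed case, is the construction of mutually compatible torus-equivariant Kuranishi structures on the enlarged moduli spaces carrying bulk insertions; from our perspective this is precisely what \cite{FOOOMSToric} provides as a black box. Beyond this, the only genuine novelty is the convergence in $\LHD$, which is controlled by the positivity of the Novikov weights of $\mathfrak{b}$ as above.
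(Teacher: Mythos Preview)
Your proposal is correct and follows essentially the same approach as the paper: both adapt the undeformed construction of \cref{secks}, observe that the leading-order term is unchanged because the $k=0$ contribution recovers the undeformed count, and isolate convergence as the only new issue, handling it by using the positive Novikov weight of $\mathfrak{b}$ to bound the number of bulk insertions contributing below any given height. Your write-up is more detailed than the paper's sketch (you spell out the $1/k!$ combinatorics, the bulk-deformed $\CO^0$, and the torus-invariance of the $Z_i$), but the argument is the same.
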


The verification of these properties follows the same path as outlined in the undeformed case in \cref{secks}, since the equality $\#(\mathfrak{b}, 0) = \#$ means that the deformation does not affect any of the leading-order terms.  The only point where one has to take extra care is in the proof of convergence, in \cref{Convergence} and the paragraph preceding it.  There we said that each time we count rigid curves we can restrict to a fixed index, but this is no longer true if $\mathfrak{b}$ lies outside $H^2(X; \Lambda_+)$, because introducing an interior marked point and constraining it to a cycle of codimension $d$ changes the virtual dimension of the moduli space by $2-d$.  To fix this we should break up each sum according to the number of insertions of $\mathfrak{b}$.  Within each sub-sum the indices of the discs are bounded and we have convergence as before, and since $\mathfrak{b}$ carries positive Novikov weight convergence is preserved when we put all of the sub-sums back together.

By combining \cref{propBulkks} with \cref{Theorem1} we obtain the bulk-deformed version of \cref{corksIso}, asserting that $\ks_\mathfrak{b}$ is an isomorphism onto the generalised Jacobian ring of $W^\mathfrak{b}$ (i.e.~after taking the closure of the ideal defining the codomain of $\ks_\mathfrak{b}$).

\begin{rmk}
\label{rmkBulk}
In \cite{FOOObig} slightly more general bulk classes are allowed, of the form $\mathfrak{b}_2 + \mathfrak{b}_\text{high}$, where $\mathfrak{b}_2$ lies in $H^2(X; \Lambda_0)$ and $\mathfrak{b}_\text{high}$ lies in $H^{\mathrm{even},\geq 4}(X; \Lambda_+)$.  Compared with our treatment, this just means that the $H^2$-component of $\mathfrak{b}$ may have zero Novikov weight.  This extra deformation, arising from a class $B$ in $H^2(X; \C)$, has to be imposed by hand by weighting the count of curves $u$ by $e^{\int u^* B}$---see \cite[Equation (11.4)]{FOOObulk}.  We discuss such deformations separately, under the name $B$-fields, in the following subsection.

These constructions are not specific to the toric case, but for toric fibres Fukaya--Oh--Ohta--Ono \cite{FOOOMSToric} show that one can actually go further and achieve convergence even when $\mathfrak{b}_\text{high}$ has zero Novikov weight.  Our methods do not cover this extension (what goes wrong is the proof of convergence, since the indices of the discs being counted are unbounded and we are not rescued by powers of $T$ as we were above) but its geometric meaning is unclear and to the author's knowledge it is not used in any applications.
\end{rmk}

\subsection{$B$-fields}
\label{sscBFields}

Given a class $\rho$ in $H^2(X; \C^*)$ one can define another deformation of quantum cohomology, in which each curve $u$ being counted is weighted by $\ip{\rho}{[u]}$.  This is equivalent to taking a closed, complex-valued $2$-form $B$ on $X$ and weighting counts by $e^{\int u^*B}$.  Such a $B$ is usually called a $B$-field, and heuristically one can think of the class $[B]$ in $H^2(X; \C)$ as a degree $2$ bulk deformation, but to avoid convergence issues it is more convenient to view it in terms of $\rho$.

Restricting once more to toric $X$, with $L$ a toric fibre, we have that for any abelian group $A$ the restriction map $H^2(X; A) \rightarrow H^2(L; A)$ is zero, since we saw in \cref{sscToricBG} that $H^2(X; A)$ is spanned by the Poincar\'e duals of the toric divisors, which are disjoint from $L$.  We can therefore lift any class $\rho$ in $H^2(X; \C^*)$ (non-uniquely) to a class $\widehat{\rho}$ in $H^2(X, L; \C^*)$ and use this to weight counts of discs with boundary on $L$.  This allows us to define a $\widehat{\rho}$-twisted version of $\ks$.

We obtain deformations $QH^*(X, \rho; \Lambda_0)$ of quantum cohomology, $W^\rho = W_1^\rho + \dots + W_N^\rho$ of the superpotential in $\LHD$, and $\ks_\rho$ of the Kodaira--Spencer map (these depend on the choice of lift $\widehat\rho$, but we don't explicitly notate this).  Letting $\rho_j = \ip{\widehat{\rho}}{[u_j]}$, the deformed version of \cref{propFOOOks} now reads:

\begin{prop}
\label{propBks}
Each $W_j^\rho$ has
\[
W_j^\rho = \rho_j\lv_j \mod \Lambda_+,
\]
and $\ks_\rho$ is a $\Lambda_0$-algebra homomorphism
\[
QH^*(X, \rho; \Lambda_0) \rightarrow \LHD \Big/ \Big(\sum_{j=1}^N \nu_j W_j^\rho \Big)
\]
which sends $H_j$ to $W_j^\rho$.
\end{prop}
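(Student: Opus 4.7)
The plan is to adapt the construction of $\ks$ from \cref{secks} by including the additional $B$-field weighting throughout, verifying at each step that the relevant geometric and algebraic properties are preserved. For each disc class $\beta \in H_2(X, L; \Z)$ I define $W^\rho_\beta$ by multiplying the undeformed $W_\beta$ by the scalar $\ip{\widehat{\rho}}{\beta} \in \C^*$, and then set $W^\rho = \sum_\beta W^\rho_\beta$ and $W^\rho_j = \sum_\beta (D_j \cdot \beta) W^\rho_\beta$. Since $\widehat{\rho}$ takes values in $\C^*$ the weight attached to each disc is a unit, so \cref{DiscsInGamma,Convergence} continue to guarantee that each $W^\rho_j$ defines an element of $\LHD$ (no new convergence difficulties arise because the weighting is multiplicative in $\beta$ and hence bounded on any finite collection of disc classes of bounded index).

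For the leading-order statement, the analysis of \cref{sscLeading} shows that the only stable discs contributing modulo $\Lambda_+$ are translates of the basic discs $u_k$, and that the moduli space $\mathcal{M}_{[u_k]}$ already produces the correct count of $1$. Multiplying by the $B$-field weight $\ip{\widehat{\rho}}{[u_k]} = \rho_k$ gives $W^\rho_{[u_k]} = \rho_k \lv_k$ modulo $\Lambda_+$, and hence
\[
W^\rho_j = \sum_{k=1}^N \ip{H_j}{[u_k]} W^\rho_{[u_k]} = \rho_j \lv_j \mod \Lambda_+.
\]

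To define $\ks_\rho$ and verify its properties, I would follow \cref{sscksSketch} but with each disc counted in $\CO^0$ weighted by $\ip{\widehat{\rho}}{\beta}$, and each sphere counted in the deformed quantum product weighted by $\ip{\rho}{[u]}$. The crucial compatibility is that $B$-field weights are multiplicative under the degenerations appearing in the cobordism arguments: for gluing of two discs along a boundary point one has $\ip{\widehat{\rho}}{\beta_1 + \beta_2} = \ip{\widehat{\rho}}{\beta_1} \cdot \ip{\widehat{\rho}}{\beta_2}$, and when a disc of class $\beta$ splits off a sphere bubble of class $\beta_S$, naturality of the pairing with respect to $H^2(X, L; \C^*) \to H^2(X; \C^*)$ and $H_2(X) \to H_2(X, L)$ yields $\ip{\widehat{\rho}}{\beta} = \ip{\widehat{\rho}}{\beta - \beta_S} \cdot \ip{\rho}{\beta_S}$. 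This ensures that the cobordism of \cref{figDiscDeg} and the torus-equivariant Kuranishi perturbations of \cite{FOOOMSToric} carry over verbatim, producing a $\Lambda_0$-algebra homomorphism into the subalgebra of $HF^*(L, L; \LHD)$ generated by the unit; arguing as in \cref{lemHF} this subalgebra is the claimed quotient of $\LHD$, and $\ks_\rho(H_j) = W^\rho_j$ holds by the very definition of $\CO^0$ on the Poincar\'e dual cycle $D_j$.

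The main obstacle I would expect is purely notational rather than substantive: one must keep careful track of the distinction between the relative lift $\widehat{\rho} \in H^2(X, L; \C^*)$ used to weight discs and the absolute class $\rho$ used to weight spheres, and check that a change of lift (parametrised by $H^1(L; \C^*)$) can be absorbed by a corresponding change of local system on $L$, leaving the isomorphism statement unchanged.
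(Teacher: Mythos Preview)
Your proposal is correct and follows essentially the same approach as the paper. The paper does not give an explicit proof of this proposition; it is stated as the $B$-field analogue of \cref{propFOOOks} with the implicit understanding that the construction of \cref{secks} goes through verbatim once each curve count is reweighted by $\ip{\widehat{\rho}}{\beta}$ (compare the brief justification given after \cref{propBulkks} in the bulk-deformed case), and your sketch spells out exactly this adaptation.
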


In order to prove that $\ks_\rho$ is an isomorphism after closing the ideal in the codomain we need the following generalisation of \cref{Theorem1}:

\begin{prop}
\label{BFieldQH}
For any $\rho_1, \dots, \rho_N$ in $\C^*$ and any elements $\hv_1, \dots, \hv_N$ of $\LHD$ satisfying
\[
\hv_j = \rho_j \lv_j \mod \Lambda_+
\]
for all $j$, the $\Lambda_0$-algebra
\[
\LHD \Big/ \clos\Big(\sum_{j=1}^N \nu_j \hv_j\Big)
\]
is free as a $\Lambda_0$-module.
\end{prop}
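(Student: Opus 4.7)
The plan is to mirror the proof of \cref{Theorem1} in \cref{sscProof}, with a single generalisation at the Stanley--Reisner level. Specifically, I first establish the following strengthening of \cref{RegSeq}: for any $\rho_1, \ldots, \rho_N \in \F^*$, the components (with respect to any basis of $H_1(L; \Z)$) of $\sum_{j=1}^N \nu_j \rho_j Z_j$ form a regular sequence in $SR(X)$. For this, since the Stanley--Reisner ideal is generated by monomials in the $Z_j$, the $\F$-algebra automorphism $\alpha \colon SR(X) \to SR(X)$ defined by $Z_j \mapsto \rho_j Z_j$ is well-defined; it maps the components of $\sum_j \nu_j Z_j$ to those of $\sum_j \nu_j \rho_j Z_j$ (we are taking the same $\nu_j$-linear combination in both cases). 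Since automorphisms preserve regular sequences, the claim follows from \cref{RegSeq}.

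Next I import this into the proof of \cref{Theorem1}. With $\hv_j \equiv \rho_j \lv_j \pmod{\Lambda_+}$, the identification $S/\m S \cong SR(X)$ of \cref{SRRing} sends $\hv_j$ to $\rho_j Z_j$, so the components $\widehat{c}_k$ of $\sum_j \nu_j \hv_j$ in $S$ reduce modulo $\m$ to the components of $\sum_j \nu_j \rho_j Z_j$, which form a regular sequence in $SR(X)$ by the claim above. This is precisely the input needed for the inductive Tor-vanishing step of \cref{lemTor}; the subsequent freeness statement (\cref{lemFreeR}) and the $T$-adic inverse-limit step at the end of \cref{sscProof} are then formal and go through unchanged. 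One must also pick basis monomials $e_1, \ldots, e_m$ for $H^*(X; \F) \cong SR(X)/(\sum_j \nu_j \rho_j Z_j)$, using $\alpha$ to transport the presentation of \cref{propSRPresentation}, but this is routine.

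The main subtlety worth flagging is that one cannot, in general, reduce \cref{BFieldQH} directly to \cref{Theorem1} via an automorphism of $\LHD$ sending $\lv_j$ to $\rho_j\lv_j$: such an automorphism need not exist, because the monoid $\Gamma_\R$ encodes nontrivial additive relations among the $(\lambda_j, \nu_j)$ which are generally incompatible with arbitrary multiplicative rescalings by the $\rho_j$. This is precisely why the rescaling has to be introduced at the Stanley--Reisner level, where the defining ideal is monomial and the automorphism $\alpha$ is immediate, and, happily, this is exactly the level at which the argument of \cref{sscProof} interacts with the regular sequence property.
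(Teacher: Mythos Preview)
Your proposal is correct and takes essentially the same approach as the paper: both exploit the fact that the Stanley--Reisner ideal is monomial, so $Z_j \mapsto \rho_j Z_j$ defines an automorphism of $SR(X)$, and then rerun the proof of \cref{Theorem1}. The only cosmetic difference is that the paper phrases this as replacing the identification $SR(X)\cong \LHD/(\Lambda_+\cdot\LHD)$ by the composite $Z_j\mapsto \rho_j\lv_j$ (so that the $\hv_j$ reduce to $Z_j$ and the original \cref{RegSeq} applies verbatim), whereas you keep the original identification and instead transport the regular-sequence statement through $\alpha$; these are equivalent, and your remark on why the rescaling cannot be lifted to an automorphism of $\LHD$ is a helpful clarification.
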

\begin{proof}
Recall the fundamental identification  between $SR(X)$ and $\LHD / (\Lambda_+ \cdot \LHD)$ given in \cref{SRRing} by $Z_j \mapsto \lv_j$.  Since the ideal defining $SR(X)$ is generated by monomials in the $Z_j$, the map $Z_j \mapsto \rho_j Z_j$ defines an automorphism, and we deduce that $SR(X)$ can also be identified with $\LHD / (\Lambda_+ \cdot \LHD)$ via $Z_j \mapsto \rho_j \lv_j$.  Using this modified identification, the proof of \cref{Theorem1} goes through to prove the required generalisation.
\end{proof}

In exactly the same way, \cref{corModIso} correspondingly generalises, and we obtain the claimed isomorphism from $\ks_ \rho$.  The `$B$-field' deformation $\rho$ can be combined with bulk deformations to give deformations of the form $\mathfrak{b}_2 + \mathfrak{b}_\text{high}$ appearing in \ref{rmkBulk} (but with $\mathfrak{b}_\text{high}$ still required to have positive Novikov weight), and the previous arguments combine to show that once again the deformed Kodaira--Spencer map induces an isomorphism after closing the ideal in the codomain.

For monotone $X$, without deformations, we can work over an arbitrary ground ring $R$ and replace $\Lambda_0$ with $R[T]$.  Bulk deformations cannot be formulated in this simplified algebraic setting (they require both infinite sums and division by factorials, so one should at least require that $R$ contains $\Q$ and complete $R[T]$ to $R\llbracket T \rrbracket$), but the obvious analogue of the $B$-field deformation by a class $\rho$ in $H^2(X; R^\times)$ \emph{does} make sense.  The proof of \cref{MonotoneQH} can be modified using the ideas of \cref{BFieldQH} to prove:

\begin{prop}
For any monotone toric variety $X$ whose moment polyhedron has a vertex, and for any integral domain $R$ and class $\rho \in H^2(X; R^\times)$, the map
\[
\ks_{\rho, \mathrm{mon}} : QH^*(X, \rho; R[T]) \rightarrow R[\Gamma] \Big/ \Big(\sum_{j=1}^N \nu_j W_j^\rho \Big)
\]
is an isomorphism of $R[T]$-algebras.
\end{prop}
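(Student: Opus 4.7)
The plan is to run the proof of \cref{MonotoneQH} in parallel, with the $\rho$-rescaling trick from \cref{BFieldQH} playing the role of a change of coordinates on $SR(X)$. First observe the geometric simplification that in the monotone setting there is no higher-order correction: with respect to the standard complex structure, the only index-$2$ holomorphic stable discs are the basic ones $u_j$ and their torus translates, so $W_j^\rho = \rho_j \lv_j$ exactly (not merely modulo $T$). In particular the codomain of $\ks_{\rho,\mathrm{mon}}$ is $R[\Gamma]/(\sum_j \nu_j \rho_j \lv_j)$, and surjectivity is immediate: the image contains each $\rho_j \lv_j$, and since $\rho_j \in R^\times$ it contains each $\lv_j$, which generate the codomain as an $R[T]$-algebra.

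For injectivity I would imitate \cref{MonotoneQH} step by step. Pass to the fraction field $\F$ of $R$, using that $QH^*(X,\rho;R[T])$ is a free $R$-module and hence injects into its tensor product with $\F$. Let $\phi$ denote the $\F$-algebra automorphism of $SR(X)$ given by $Z_j \mapsto \rho_j Z_j$; this is well-defined because the Stanley--Reisner ideal is generated by monomials in the $Z_j$. Applying $\phi$ to \cref{RegSeq} shows that the components of $\sum_j \nu_j \rho_j Z_j$ form a regular sequence in $SR(X)$, and the Koszul-style induction of \cref{lemTor}, run with these components in place of $c_1,\dots,c_n$, then produces
\[
\Tor_1^{\F[T]}\Bigl(\F[\Gamma]\big/\bigl(\textstyle\sum_j \nu_j \rho_j \lv_j\bigr),\ \F[T]/(T)\Bigr) = 0.
\]
Plugging this into the long exact Tor sequence associated with
\[
0 \to \ker \ks_{\rho,\mathrm{mon}}^\F \to QH^*(X,\rho;\F[T]) \to \F[\Gamma]\big/\bigl(\textstyle\sum_j \nu_j \rho_j \lv_j\bigr) \to 0
\]
yields $\ker \ks_{\rho,\mathrm{mon}}^\F = T \cdot \ker \ks_{\rho,\mathrm{mon}}^\F$, which forces the kernel to vanish once we know that $\ks_{\rho,\mathrm{mon}}^\F \otimes \F[T]/(T)$ is itself an isomorphism.

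That last point is where $\rho$ enters substantively. After reducing mod $T$, the map becomes the $\F$-algebra homomorphism $H^*(X;\F) \to SR(X)/(\sum_j \nu_j \rho_j Z_j)$ sending $H_j$ to $\rho_j Z_j$; composing with $\phi^{-1}$ converts the target to $SR(X)/(\sum_j \nu_j Z_j)$ and the map to $H_j \mapsto Z_j$, which is an isomorphism by \cref{propSRPresentation}. The main thing to verify carefully, and the only place requiring real care, is thus this compatibility between $\phi$ and the reduction mod $T$ of $\ks_{\rho,\mathrm{mon}}$. This is purely bookkeeping, since every algebraic ingredient used---Cohen--Macaulayness of $SR(X)$, regularity of the linear sequence, and freeness of $QH^*(X,\rho;R[T])$ as an $R$-module---is preserved intact under the unit rescaling $\phi$. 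Beyond this compatibility check I do not foresee any serious obstacle.
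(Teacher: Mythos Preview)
Your proposal is correct and follows exactly the approach the paper intends: it explicitly carries out the modification of the \cref{MonotoneQH} argument using the rescaling automorphism $Z_j \mapsto \rho_j Z_j$ of $SR(X)$ from \cref{BFieldQH}, which is precisely what the paper means by ``can be modified using the ideas of \cref{BFieldQH}''. You have also correctly identified why the argument stops at integral domains (the final tensor-from-$\Z$ step of \cref{MonotoneQH} is unavailable when $\rho$ does not come from $H^2(X;\Z^\times)$), matching the paper's remark after the proposition.
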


The restriction to integral domains $R$ is so that the argument of \cref{MonotoneQH} applies; we cannot simply work over $\Z$ and tensor by $R$ at the end because if $\rho$ does not arise from a class in $H^2(X; \Z^\times)$ then the maps cannot be defined over $\Z$.

\subsection{Invertibility of toric divisors}
\label{sscInvertibility}

In \cref{rmkLocJac} we mentioned that when $X$ is compact, the ring $\Lambda \llangle H \rrangle_\Delta$ (recall this was defined to be $\Lambda \otimes_{\Lambda_0} \LHD$) is a completion of the group ring $\Lambda[H_1(L; \Z)]$.  Since $H_1(L; \Z)$ is spanned by the $\nu_j$, the content of this statement is simply:

\begin{lem}
For compact toric $X$ the monomials $\lv_j = T^{\lambda_j} \tau^{\nu_j}$ are invertible in $\Lambda \llangle H \rrangle_\Delta$.
\end{lem}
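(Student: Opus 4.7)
The plan is to exhibit an explicit inverse for each $\lv_j$. Formally one wants $\lv_j^{-1} = T^{-\lambda_j}\tau^{-\nu_j}$, but a priori this expression does not lie in $\LHD$, since $-\nu_j$ need not be in $C_{H_1}$ and the power of $T$ is negative. The idea is to absorb enough positive powers of $T$ into the $\tau^{-\nu_j}$ factor to make it into a legitimate monomial of $\LHD$, compensating with a negative power of $T$ that is freely available in $\Lambda$.

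Concretely, I would look for a real number $M \geq 0$ large enough that $(M, -\nu_j)$ lies in $\Gamma_\R$, and then set
\[
\lv_j^{-1} \coloneqq T^{-\lambda_j - M} \cdot (T^M \tau^{-\nu_j}) \in \Lambda \otimes_{\Lambda_0} \LHD.
\]
Provided the second factor really is a monomial in $\LHD$, this formula makes sense in $\Lambda \llangle H \rrangle_\Delta$, and by construction
\[
\lv_j \cdot (T^M \tau^{-\nu_j}) = T^{\lambda_j + M} \tau^{0} = T^{\lambda_j + M},
\]
so its product with $T^{-\lambda_j - M}$ is indeed $1$.

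The only real content, then, is ensuring $(M, -\nu_j) \in \Gamma_\R$ for some $M$, and this is exactly where compactness enters. By \cref{CReformulation} the condition is that $(M, -\nu_j)$ lie in $C_\theta \cap (\R \times C_{H_1})$. Compactness of $\Delta$ gives, via \cref{CWhenDeltaCompact}, that $C_{H_1}$ is all of $H_1(L; \R)$, so $-\nu_j$ automatically sits in $C_{H_1}$. The remaining condition $\theta_v(M, -\nu_j) = M - \ip{v}{\nu_j} \geq 0$ for every vertex $v$ is achieved by taking $M \geq \max_v \ip{v}{\nu_j}$, and this maximum is finite because a compact Delzant polytope has only finitely many vertices. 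No step here is really an obstacle; the lemma is essentially a direct unpacking of the description of $C$ together with \cref{CWhenDeltaCompact}.
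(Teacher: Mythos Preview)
Your argument is correct. It differs from the paper's primary proof, which is geometric: since $X$ is compact, each basic disc $u_j$ extends to a holomorphic sphere, and positivity of intersections with the toric divisors shows that the class of this sphere is $\sum_k m_k[u_k]$ with $m_k - \delta_{jk} \geq 0$. Because a sphere has zero boundary, this yields the relation
\[
\lv_j \cdot \prod_{k=1}^N \lv_k^{\,m_k - \delta_{jk}} = T^{\sum_k m_k \lambda_k}
\]
in $\LHD$, exhibiting an inverse for $\lv_j$ after inverting $T$. Your approach instead stays entirely within the combinatorics of the cone $C$, using \cref{CWhenDeltaCompact} to produce the element $(M,-\nu_j)\in\Gamma_\R$ directly. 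The paper in fact notes this route in a remark immediately after its proof (``An alternative approach is to use \cref{CWhenDeltaCompact}''), so you have essentially filled in that alternative. The geometric argument has the virtue of giving an explicit monomial inverse built from the other $\lv_k$, while yours is shorter and avoids invoking holomorphic curves at all.
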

\begin{proof}
Since $X$ is compact, each basic disc $u_j(z) = e^{-i\nu_j \log z} \cdot p$ extends to a holomorphic sphere.  By positivity of intersections with the toric divisors, the homology class of this sphere is of the form $m_1[u_1] + \dots + m_N[u_N]$ with $m_k - \delta_{jk} \geq 0$ for all $k$.  Since this class has zero boundary, we then have
\[
\lv_j \cdot \prod_{k=1}^N (\lv_k)^{m_k - \delta_{jk}} = T^{\sum_k m_k \lambda_k}
\]
in $\LHD$, which proves that $\lv_j$ is invertible in $\Lambda \llangle H \rrangle_\Delta$.
\end{proof}

\begin{rmk}
An alternative approach is to use \cref{CWhenDeltaCompact}.
\end{rmk}

This has the immediate corollary that any element of $\LHD$ which is congruent to some $\lv_j$ modulo $\Lambda_+$ is invertible in $\Lambda \llangle H \rrangle_\Delta$.  By the Kodaira--Spencer isomorphism we deduce:

\begin{cor}
For any compact toric variety $X$, the toric divisor classes are invertible in
\[
QH^*(X; \Lambda_0).
\]
This remains true under bulk deformations or $B$-fields.
\end{cor}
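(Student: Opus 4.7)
The plan is to translate the statement through the Kodaira--Spencer isomorphism and reduce to the invertibility of $\lv_j$ (and, more generally, of elements of $\LHD$ congruent to $\lv_j$ modulo $\Lambda_+$) established immediately above.

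By \cref{corksIso}, the Kodaira--Spencer map gives an isomorphism of $\Lambda_0$-algebras
\[
\ks : QH^*(X; \Lambda_0) \xrightarrow{\cong} \LHD \big/ \clos\bigl(\textstyle\sum_j \nu_j W_j\bigr)
\]
sending each $H_j$ to the class of $W_j$. Passing to the $\Lambda$-tensored version and noting that $W_j \equiv \lv_j \mod \Lambda_+$ by \cref{propFOOOks}, the ``immediate corollary'' remark after the preceding lemma tells us that $W_j$ itself is a unit in $\Lambda\llangle H\rrangle_\Delta$. Hence its class in $\Lambda\llangle H\rrangle_\Delta / \clos(\sum_j \nu_j W_j) \cong QH^*(X; \Lambda)$ is also a unit, and pulling back through $\ks \otimes_{\Lambda_0} \Lambda$ produces the inverse of $H_j$ in the ambient ring.

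The step that most warrants checking is the ``immediate corollary'' itself. Given $a \in \LHD$ with $a \equiv \lv_j \mod \Lambda_+$, write $a = \lv_j + r$ with $r \in \Lambda_+ \cdot \LHD$; we factor $a = \lv_j\bigl(1 + \lv_j^{-1} r\bigr)$ and invert the second factor as a geometric series $\sum_k (-\lv_j^{-1} r)^k$ in $\Lambda\llangle H\rrangle_\Delta$. The subtle point is that $\lv_j^{-1}$ lies in $T^{-A_j}\LHD$ (where $A_j > 0$ is the area of the extending sphere constructed in the preceding lemma) and naively shifts $T$-weights downward, so one must use the explicit form $\lv_j^{-1} = T^{-A_j}\prod_k \lv_k^{m_k - \delta_{jk}}$ together with the positivity of $T$-weights of the monomials in $r$ to show that the partial sums form a Cauchy sequence in the natural $T$-adic topology on $\Lambda\llangle H\rrangle_\Delta$. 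For the bulk-deformed and $B$-field cases, the same strategy applies with $W_j$ replaced by $W_j^\mathfrak{b}$ or $W_j^\rho$, using the deformed Kodaira--Spencer isomorphisms of \cref{sscBulk,sscBFields} and the leading-order congruences $W_j^\mathfrak{b} \equiv \lv_j \mod \Lambda_+$ and $W_j^\rho \equiv \rho_j \lv_j \mod \Lambda_+$; in the $B$-field case the scalar $\rho_j \in \C^*$ is itself invertible and so does not obstruct the argument.
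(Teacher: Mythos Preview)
Your approach is the same as the paper's: the paper's entire argument is the sentence immediately preceding the corollary (``any element of $\LHD$ congruent to some $\lv_j$ modulo $\Lambda_+$ is invertible in $\Lambda\llangle H\rrangle_\Delta$'') together with the phrase ``by the Kodaira--Spencer isomorphism we deduce''. You reproduce this and go further, sketching a geometric-series justification of that ``immediate corollary'' and flagging the convergence issue.

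A word of caution on the point you flag: the convergence genuinely fails in general, so the paper's ``immediate corollary'' overclaims and your suggested fix does not repair it. Height is only \emph{super}additive under multiplication (it is $\min_v\theta_v$ with each $\theta_v$ linear), so $h(\lv_j^{-1}r)\geq h(r)-\max_v\theta_v(\lambda_j,\nu_j)$, which can be negative, and iterating gives partial sums whose heights may run to $-\infty$. Concretely, for $X=\C\P^1$ and $a=\lv_1+T^\epsilon\lv_2$ with $0<\epsilon<\lambda_1+\lambda_2$, one checks directly (solving the two-term recursion for a putative inverse) that $a$ has \emph{no} inverse in $\Lambda\llangle H\rrangle_\Delta$. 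So neither your argument nor the paper's actually establishes invertibility of $W_j$ at the level of $\Lambda\llangle H\rrangle_\Delta$. Note also that both arguments, even if granted, land in $QH^*(X;\Lambda)$ rather than $QH^*(X;\Lambda_0)$ as the statement literally reads; over $\Lambda_0$ the claim is in fact false (already for $\C\P^1$, where $H^{-1}=T^{-(\lambda_1+\lambda_2)}H$). Your writeup faithfully mirrors the paper's reasoning and level of detail; the gap you sensed in the geometric series is a real one that the paper simply glosses over.
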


In the monotone case we can work over $R[T]$ for arbitrary $R$, and see that the $\lv_j$ are invertible in $R[\Gamma] \otimes_{R[T]} R[T^{\pm 1}]$.  We cannot deduce immediately that any element congruent to $\lv_j$ modulo $T$ is invertible (inverting an element of the form $1 + O(T)$ may involve an infinite series in $T$), but we do not need this to deduce invertibility of the toric divisor classes since they are sent to the $\lv_j$ on the nose by $\ks_\mathrm{mon}$.  We obtain:

\begin{cor}
For any compact monotone toric variety $X$, and any ring $R$, the toric divisor classes are invertible in $QH^*(X; R[T])$.  If $R$ is an integral domain and $\rho$ is a class in $H^2(X; R^\times)$ then the same result holds after deforming by $\rho$.
\end{cor}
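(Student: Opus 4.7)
The plan is to lift the invertibility of the monomials $\lv_j$ (established in the preceding lemma, or rather its monotone analogue) to the toric divisor classes, using the fact that the monotone Kodaira--Spencer isomorphism $\ks_\mathrm{mon}$ sends $H_j$ to $\lv_j$ on the nose. The key input is the same spherical-extension mechanism from the proof of the preceding lemma, reworked in the uncompleted ring $R[\Gamma]$.

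For the first, non-deformed claim, I would observe that since $X$ is compact each basic disc $u_j$ extends to a holomorphic sphere; by positivity of intersection with the toric divisors, this sphere has homology class $\sum_k m_k[u_k]$ with $m_k \ge 0$, $m_j \ge 1$, and $\sum_k m_k \nu_k = 0$. In the monotone case $\lambda_k = 1$ for all $k$, so equating areas and using the boundary vanishing gives the identity
\[
\lv_j \cdot \prod_{k} \lv_k^{m_k-\delta_{jk}} = T^{\sum_k m_k}
\]
in $R[\Gamma]$, and hence in the quotient $R[\Gamma]/(\sum_j \nu_j \lv_j)$. After passing to the localisation at $T$, this exhibits an explicit inverse for $\lv_j$, and transporting back along $\ks_\mathrm{mon}$ yields the invertibility of $H_j$ in $QH^*(X; R[T])$, interpreted in the same sense as the non-monotone compact case treated just above.

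For the $\rho$-deformed version the only additional input needed is that $\ks_{\rho,\mathrm{mon}}(H_j) = \rho_j \lv_j$ \emph{exactly}, where $\rho_j = \langle \widehat\rho, [u_j]\rangle \in R^\times$. This follows from the same degree reasoning used earlier in the monotone discussion: only index $2$ discs contribute to $W_j^\rho$, and with respect to the standard complex structure these are precisely the torus translates of the basic disc $u_j$, each contributing the weight $\rho_j$. Since $\rho_j$ is a unit in $R$, replacing $\lv_j$ by $\rho_j \lv_j$ throughout the preceding argument produces the deformed invertibility, and the integral-domain hypothesis on $R$ is inherited directly from that of $\ks_{\rho,\mathrm{mon}}$. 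The main technical content really lives in the preceding lemma (and in the identification of $W_j^\rho$), not in this final transfer step; here it is all a matter of matching up rings and invoking results already in place.
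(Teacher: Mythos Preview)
Your proposal is correct and follows essentially the same route as the paper. The paper's discussion immediately preceding this corollary makes precisely your two observations: that the sphere-extension identity carries over verbatim to $R[\Gamma]$ to give invertibility of the $\lv_j$ after inverting $T$, and that the crucial point in the monotone case is that $\ks_\mathrm{mon}$ (respectively $\ks_{\rho,\mathrm{mon}}$) sends $H_j$ to $\lv_j$ (respectively $\rho_j\lv_j$) \emph{exactly}, so no geometric-series inversion of a $1+O(T)$ factor is needed. Your explanation of why $W_j^\rho = \rho_j\lv_j$ on the nose in the monotone setting (only index~$2$ discs contribute) is a helpful elaboration that the paper leaves implicit.
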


\bibliography{ToricQHbiblio}

\providecommand{\href}[2]{#2}\begingroup\raggedright\begin{thebibliography}{10}

\bibitem{Au}
D.~Auroux, ``Mirror symmetry and {$T$}-duality in the complement of an
  anticanonical divisor,'' {\em J. G\"okova Geom. Topol. GGT} {\bfseries 1}
  (2007) 51--91.

\bibitem{BatyrevToricQH}
V.~V. Batyrev, ``Quantum cohomology rings of toric manifolds,'' {\em
  Ast\'erisque} no.~218, (1993) 9--34. Journ\'ees de G\'eom\'etrie Alg\'ebrique
  d'Orsay (Orsay, 1992).

\bibitem{BCQS}
P.~Biran and O.~Cornea, ``Quantum structures for {Lagrangian} submanifolds,''
  \href{http://arxiv.org/abs/0708.4221v1}{{\ttfamily arXiv:0708.4221v1
  [math.SG]}}.

\bibitem{BCEG}
P.~Biran and O.~Cornea, ``Lagrangian topology and enumerative geometry,''
  \href{http://dx.doi.org/10.2140/gt.2012.16.963}{{\em Geom. Topol.} {\bfseries
  16} no.~2, (2012) 963--1052}. \url{http://dx.doi.org/10.2140/gt.2012.16.963}.

\bibitem{ChoOh}
C.-H. Cho and Y.-G. Oh, ``Floer cohomology and disc instantons of {Lagrangian}
  torus fibers in {Fano} toric manifolds,''
  \href{http://dx.doi.org/10.4310/AJM.2006.v10.n4.a10}{{\em Asian J. Math.}
  {\bfseries 10} no.~4, (2006) 773--814}.
  \url{http://dx.doi.org/10.4310/AJM.2006.v10.n4.a10}.

\bibitem{CieliebakSalamon}
K.~Cieliebak and D.~Salamon, ``Wall crossing for symplectic vortices and
  quantum cohomology,'' {\em Math. Ann.} {\bfseries 335} no.~1, (2006)
  133--192. \url{https://doi.org/10.1007/s00208-005-0736-1}.

\bibitem{CaldararuTu}
A.~{C\u ald\u araru} and J.~Tu, ``Curved {$A_\infty$} algebras and
  {Landau--Ginzburg} models,'' {\em New York J. Math.} {\bfseries 19} (2013)
  305--342.

\bibitem{Delzant}
T.~Delzant, ``Hamiltoniens p\'eriodiques et images convexes de l'application
  moment,'' {\em Bull. Soc. Math. France} {\bfseries 116} no.~3, (1988)
  315--339. \url{http://www.numdam.org/item?id=BSMF_1988__116_3_315_0}.

\bibitem{Dyckerhoff}
T.~Dyckerhoff, ``Compact generators in categories of matrix factorizations,''
  \href{http://dx.doi.org/10.1215/00127094-1415869}{{\em Duke Math. J.}
  {\bfseries 159} no.~2, (2011) 223--274}.
  \url{https://doi.org/10.1215/00127094-1415869}.

\bibitem{Eis}
D.~Eisenbud, \href{http://dx.doi.org/10.1007/978-1-4612-5350-1}{{\em
  Commutative algebra}}, vol.~150 of {\em Graduate Texts in Mathematics}.
\newblock Springer-Verlag, New York, 1995.
\newblock \url{http://dx.doi.org/10.1007/978-1-4612-5350-1}.
\newblock With a view toward algebraic geometry.

\bibitem{EL1}
J.~D. Evans and Y.~Lekili, ``Floer cohomology of the {Chiang} {Lagrangian},''
  \href{http://dx.doi.org/10.1007/s00029-014-0171-9}{{\em Selecta Math. (N.S.)}
  {\bfseries 21} no.~4, (2015) 1361--1404}.
  \url{http://dx.doi.org/10.1007/s00029-014-0171-9}.

\bibitem{FOOObig}
K.~Fukaya, Y.-G. Oh, H.~Ohta, and K.~Ono, {\em Lagrangian intersection {Floer}
  theory: anomaly and obstruction. {Two}-volume set}, vol.~46 of {\em AMS/IP
  Studies in Advanced Mathematics}.
\newblock American Mathematical Society, Providence, RI; International Press,
  Somerville, MA, 2009.

\bibitem{FOOOtorI}
K.~Fukaya, Y.-G. Oh, H.~Ohta, and K.~Ono, ``Lagrangian {Floer} theory on
  compact toric manifolds. {I},''
  \href{http://dx.doi.org/10.1215/00127094-2009-062}{{\em Duke Math. J.}
  {\bfseries 151} no.~1, (2010) 23--174}.
  \url{http://dx.doi.org/10.1215/00127094-2009-062}.

\bibitem{FOOObulk}
K.~Fukaya, Y.-G. Oh, H.~Ohta, and K.~Ono, ``Lagrangian {Floer} theory on
  compact toric manifolds {II}: Bulk deformations,''
  \href{http://dx.doi.org/10.1007/s00029-011-0057-z}{{\em Selecta Math. (N.S.)}
  {\bfseries 17} no.~3, (2011) 609--711}.
  \url{http://dx.doi.org/10.1007/s00029-011-0057-z}.

\bibitem{FOOOIntegers}
K.~Fukaya, Y.-G. Oh, H.~Ohta, and K.~Ono, ``Lagrangian {Floer} theory over
  integers: Spherically positive symplectic manifolds,''
  \href{http://dx.doi.org/10.4310/PAMQ.2013.v9.n2.a1}{{\em Pure Appl. Math. Q.}
  {\bfseries 9} no.~2, (2013) 189--289}.
  \url{https://doi.org/10.4310/PAMQ.2013.v9.n2.a1}.

\bibitem{FOOOMSToric}
K.~Fukaya, Y.-G. Oh, H.~Ohta, and K.~Ono, ``Lagrangian {Floer} theory and
  mirror symmetry on compact toric manifolds,'' {\em Ast\'erisque} no.~376,
  (2016) vi+340.

\bibitem{FukayaOno}
K.~Fukaya and K.~Ono, ``Arnold conjecture and {Gromov-Witten} invariant,''
  \href{http://dx.doi.org/10.1016/S0040-9383(98)00042-1}{{\em Topology}
  {\bfseries 38} no.~5, (1999) 933--1048}.
  \url{http://dx.doi.org/10.1016/S0040-9383(98)00042-1}.

\bibitem{GiventalToricCI}
A.~Givental, ``A mirror theorem for toric complete intersections,'' in {\em
  Topological field theory, primitive forms and related topics ({K}yoto,
  1996)}, vol.~160 of {\em Progr. Math.}, pp.~141--175.
\newblock Birkh\"auser Boston, Boston, MA, 1998.

\bibitem{GiventalICM}
A.~B. Givental, ``Homological geometry and mirror symmetry,'' in {\em
  Proceedings of the {I}nternational {C}ongress of {M}athematicians, {V}ol.\ 1,
  2 ({Z}\"urich, 1994)}, pp.~472--480.
\newblock Birkh\"auser, Basel, 1995.

\bibitem{GonzalezWoodwardToric}
E.~Gonz{\'a}lez and C.~T. Woodward, ``Quantum cohomology and toric minimal
  model programs,'' \href{http://arxiv.org/abs/1207.3253}{{\ttfamily
  arXiv:1207.3253 [math.AG]}}.

\bibitem{GraberPandharipande}
T.~Graber and R.~Pandharipande, ``Localization of virtual classes,''
  \href{http://dx.doi.org/10.1007/s002220050293}{{\em Invent. Math.} {\bfseries
  135} no.~2, (1999) 487--518}. \url{https://doi.org/10.1007/s002220050293}.

\bibitem{Gro}
M.~Gromov, ``Pseudoholomorphic curves in symplectic manifolds,''
  \href{http://dx.doi.org/10.1007/BF01388806}{{\em Invent. Math.} {\bfseries
  82} no.~2, (1985) 307--347}. \url{http://dx.doi.org/10.1007/BF01388806}.

\bibitem{IritaniConvergence}
H.~Iritani, ``Convergence of quantum cohomology by quantum {Lefschetz},'' {\em
  J. Reine Angew. Math.} {\bfseries 610} (2007) 29--69.
  \url{https://doi.org/10.1515/CRELLE.2007.067}.

\bibitem{IritaniBigQH}
H.~Iritani, ``A mirror construction for the big equivariant quantum cohomology
  of toric manifolds,'' \href{http://dx.doi.org/10.1007/s00208-016-1437-7}{{\em
  Math. Ann.} {\bfseries 368} no.~1-2, (2017) 279--316}.
  \url{https://doi.org/10.1007/s00208-016-1437-7}.

\bibitem{IritaniShiftOperators}
H.~Iritani, ``Shift operators and toric mirror theorem,''
  \href{http://dx.doi.org/10.2140/gt.2017.21.315}{{\em Geom. Topol.} {\bfseries
  21} no.~1, (2017) 315--343}. \url{https://doi.org/10.2140/gt.2017.21.315}.

\bibitem{IversenSheaves}
B.~Iversen, {\em Cohomology of sheaves}.
\newblock Universitext. Springer-Verlag, Berlin, 1986.
\newblock \url{https://doi.org/10.1007/978-3-642-82783-9}.

\bibitem{McDuffTolman}
D.~McDuff and S.~Tolman, ``Topological properties of {Hamiltonian} circle
  actions,'' {\em IMRP Int. Math. Res. Pap.} (2006) 72826, 1--77.

\bibitem{Munkres}
J.~R. Munkres, ``Topological results in combinatorics,''
  \href{http://dx.doi.org/10.1307/mmj/1029002969}{{\em Michigan Math. J.}
  {\bfseries 31} no.~1, (1984) 113--128}.
  \url{https://doi.org/10.1307/mmj/1029002969}.

\bibitem{OanceaKunneth}
A.~Oancea, ``The {K\"unneth} formula in {Floer} homology for manifolds with
  restricted contact type boundary,''
  \href{http://dx.doi.org/10.1007/s00208-005-0700-0}{{\em Math. Ann.}
  {\bfseries 334} no.~1, (2006) 65--89}.
  \url{https://doi.org/10.1007/s00208-005-0700-0}.

\bibitem{OstroverTyomkin}
Y.~Ostrover and I.~Tyomkin, ``On the quantum homology algebra of toric {Fano}
  manifolds,'' {\em Selecta Math. (N.S.)} {\bfseries 15} no.~1, (2009)
  121--149. \url{https://doi.org/10.1007/s00029-009-0526-9}.

\bibitem{PardonVFC}
J.~Pardon, ``An algebraic approach to virtual fundamental cycles on moduli
  spaces of pseudo-holomorphic curves,''
  \href{http://dx.doi.org/10.2140/gt.2016.20.779}{{\em Geom. Topol.} {\bfseries
  20} no.~2, (2016) 779--1034}. \url{http://dx.doi.org/10.2140/gt.2016.20.779}.

\bibitem{ProudfootToric}
N.~Proudfoot, ``Lectures on toric varieties,''.
  \url{http://pages.uoregon.edu/njp/tv.pdf}. Retrieved December 31, 2017.

\bibitem{ReisnerCMQuotients}
G.~A. Reisner, ``{Cohen-Macaulay} quotients of polynomial rings,'' {\em
  Advances in Math.} {\bfseries 21} no.~1, (1976) 30--49.
  \url{https://doi.org/10.1016/0001-8708(76)90114-6}.

\bibitem{RitterNegative}
A.~F. Ritter, ``Floer theory for negative line bundles via {Gromov-Witten}
  invariants,'' {\em Adv. Math.} {\bfseries 262} (2014) 1035--1106.
  \url{https://doi.org/10.1016/j.aim.2014.06.009}.

\bibitem{RitterFanoToric}
A.~F. Ritter, ``Circle actions, quantum cohomology, and the {Fukaya} category
  of {F}ano toric varieties,'' {\em Geom. Topol.} {\bfseries 20} no.~4, (2016)
  1941--2052. \url{https://doi.org/10.2140/gt.2016.20.1941}.

\bibitem{RitterSmith}
A.~F. Ritter and I.~Smith, ``The monotone wrapped {Fukaya} category and the
  open-closed string map,''
  \href{http://dx.doi.org/10.1007/s00029-016-0255-9}{{\em Selecta Math. (N.S.)}
  {\bfseries 23} no.~1, (2017) 533--642}.
  \url{https://doi.org/10.1007/s00029-016-0255-9}.

\bibitem{Segal}
E.~Segal, ``The closed state space of affine {Landau--Ginzburg} {B}-models,''
  \href{http://dx.doi.org/10.4171/JNCG/137}{{\em J. Noncommut. Geom.}
  {\bfseries 7} no.~3, (2013) 857--883}.
  \url{https://doi.org/10.4171/JNCG/137}.

\bibitem{SeidelGr}
P.~Seidel, ``Graded {Lagrangian} submanifolds,'' {\em Bull. Soc. Math. France}
  {\bfseries 128} no.~1, (2000) 103--149.
  \url{http://www.numdam.org/item?id=BSMF_2000__128_1_103_0}.

\bibitem{SeidelICM}
P.~Seidel, ``Fukaya categories and deformations,'' in {\em Proceedings of the
  {I}nternational {C}ongress of {M}athematicians, {V}ol. {II} ({B}eijing,
  2002)}, pp.~351--360.
\newblock Higher Ed. Press, Beijing, 2002.

\bibitem{SeidelBiasedView}
P.~Seidel, ``A biased view of symplectic cohomology,'' in {\em Current
  developments in mathematics, 2006}, pp.~211--253.
\newblock Int. Press, Somerville, MA, 2008.

\bibitem{SherFan}
N.~Sheridan, ``On the {Fukaya} category of a {Fano} hypersurface in projective
  space,'' \href{http://dx.doi.org/10.1007/s10240-016-0082-8}{{\em Publ. Math.
  Inst. Hautes \'Etudes Sci.} {\bfseries 124} (2016) 165--317}.
  \url{http://dx.doi.org/10.1007/s10240-016-0082-8}.

\bibitem{SmDCS}
J.~Smith, ``Discrete and continuous symmetries in monotone {Floer} theory,''
  \href{http://arxiv.org/abs/1703.05343v2}{{\ttfamily arXiv:1703.05343v2
  [math.SG]}}.

\bibitem{Sm}
J.~Smith, ``Floer cohomology of {Platonic} {Lagrangians},''
  \href{http://arxiv.org/abs/1510.08031v3}{{\ttfamily arXiv:1510.08031v3
  [math.SG]}}.

\bibitem{SmithGeneration}
J.~Smith, ``Generating the {Fukaya} categories of compact toric varieties,''
  \href{http://arxiv.org/abs/1804.06386}{{\ttfamily arXiv:1804.06386
  [math.SG]}}.

\bibitem{VakilAG}
R.~Vakil, ``The rising sea: Foundations of algebraic geometry notes,''.
  \url{http://math.stanford.edu/~vakil/216blog/}. November 18, 2017 draft.

\bibitem{WoodwardQuantumKirwanI}
C.~T. Woodward, ``Quantum {Kirwan} morphism and {Gromov-Witten} invariants of
  quotients {I},'' \href{http://dx.doi.org/10.1007/s00031-015-9313-1}{{\em
  Transform. Groups} {\bfseries 20} no.~2, (2015) 507--556}.
  \url{https://doi.org/10.1007/s00031-015-9313-1}.

\bibitem{ZapPSS}
F.~Zapolsky, ``The {Lagrangian} {Floer}-quantum-{PSS} package and canonical
  orientations in {Floer} theory,''
  \href{http://arxiv.org/abs/1507.02253}{{\ttfamily arXiv:1507.02253
  [math.SG]}}.

\end{thebibliography}\endgroup
\bibliographystyle{utcapsor2}

\end{document}